\documentclass[a4paper,11pt]{article}

\usepackage[english]{babel}
\usepackage{cite}
\usepackage{amsfonts}
\usepackage{latexsym}
\usepackage{amsthm}
\usepackage{amsopn}
\usepackage{amsmath}
\usepackage[all]{xy}
\usepackage{verbatim}
\usepackage{amssymb}
\usepackage{mathrsfs}
\usepackage{float}
\usepackage[active]{srcltx}
\usepackage{fullpage}
\usepackage{hyperref}
\usepackage{manyfoot}
\usepackage{amscd}
\usepackage[dvips]{graphics}
\usepackage[dvips]{graphicx}
\usepackage{amscd}

\theoremstyle{remark}
\newtheorem{remark}{Remark}

\theoremstyle{rmk}
\newtheorem{rmk}[remark]{Remark}

\theoremstyle{plain}
\newtheorem{theorem}[remark]{Theorem}
\newtheorem{proposition}[remark]{Proposition}

\newtheorem{corollary}[remark]{Corollary}

\newtheorem*{theoA}{Theorem A}
\newtheorem*{theoB}{Theorem B}

\theoremstyle{definition}
\newtheorem{definition}[remark]{Definition}




\newcommand{\mQ}{\mathcal{Q}}


\newcommand{\oo}{\mathcal{O}}

\newcommand{\mL}{\mathcal{L}}






\newcommand{\ZZ}{\mathbb{Z}}



\newcommand{\FF}{\mathcal{F}} 

\newcommand{\mLd}{\mathcal{L}_{\delta}}
\newcommand{\wA}{\widehat{A}}

\newcommand{\lr}{\longrightarrow}

\renewcommand{\to}{\longrightarrow}

\title{Surfaces with $p_g=q=2$, $K^2=6$ and Albanese map \\ of degree $2$}

\author{Matteo Penegini, Francesco Polizzi}

\setcounter{section}{-1}

\date{}

\begin{document}

\maketitle

\begin{abstract}
We classify minimal surfaces of general type with $p_g=q=2$ and
$K^2=6$ whose Albanese map is a generically finite double cover.
We show that the corresponding moduli space is the disjoint union
of three generically smooth, irreducible components
$\mathcal{M}_{Ia}$, $\mathcal{M}_{Ib}$, $\mathcal{M}_{II}$ of
dimension $4$, $4$, $3$, respectively.
\end{abstract}

\Footnotetext{{}}{\textit{2010 Mathematics Subject
Classification}: 14J29, 14J10}

\Footnotetext{{}} {\textit{Keywords}: Surface of general type,
 abelian surface, Albanese map}


\section{Introduction}

Minimal surfaces $S$ of general type with $p_g=q=2$ fall into two
classes according to the behavior of their Albanese map $\alpha
\colon S \to A$. Indeed, since $q=2$, either $\alpha(S)=C$, where
$C$ is a smooth curve of genus $2$, or $\alpha$ is surjective and
$S$ is of \emph{Albanese general type}.

The surfaces which belong to the former case satisfy $K^2_S=8$ and
are nowadays completely classified, see \cite{Z03} and
\cite{Pe09}. Those belonging  to the latter case present a much
richer and subtler geometry, and their full description is still
missing; we refer the reader to the introduction of \cite{PP10}
and the references given there for a recent account on this topic.

So far, the only known example of a surface of general type with
$p_g=q=2$ and $K^2_S=6$ was the one given in \cite{Pe09}; in that
case, the Albanese map is a generically finite quadruple cover of an
abelian surface with a polarization of type  $(1, \, 3)$.

As the title suggests, in this paper we investigate surfaces with
the above invariants and whose Albanese map is a generically finite
\emph{double} cover. The results that we obtain are quite
satisfactory, indeed we are not only able to show the existence of
such \emph{new} surfaces, but we also provide their complete classification,
together with a detailed description of their moduli space.

Before stating our results, let us introduce some notation. Let
$(A, \,\mathcal{L})$ be a $(1, \,2)$-polarized abelian surface and
let us denote by $\phi_2 \colon A[2] \to \widehat{A}[2]$ the
restriction of the canonical homomorphism
$\phi_{\mathcal{L}} \colon A \to \widehat{A}$ to the subgroup of
$2$-division points. Then $\textrm{im}\, \phi_2$ consists of four
line bundles $\{\mathcal{O}_A, \, \mathcal{Q}_1, \, \mathcal{Q}_2,
\, \mathcal{Q}_3\}$. Let us denote by $\textrm{im}\,
\phi_2^{\,\times}$ the set $\{\mathcal{Q}_1, \, \mathcal{Q}_2, \,
\mathcal{Q}_3\}$.

Our first result is

\begin{theoA}[see Theorem \ref{th.A}] \label{theo.A}
Given an  abelian surface $A$ with a symmetric polarization $\mL$
of type $(1, \,2)$, not of product type,  for any $\mQ \in
\emph{im}\, \phi_2$ there exists a curve $D \in |\mL^2 \otimes
\mQ|$ whose unique non-negligible singularity is an ordinary
quadruple point at the origin $o \in A$. Let $\mQ^{1/2}$ be a
square root of $\mQ$, and if $\mQ=\oo_A$ assume moreover
$\mQ^{1/2} \neq \oo_A$. Then the minimal desingularization $S$ of
the double cover of $A$ branched over $D$ and defined by $\mL
\otimes \mQ^{1/2}$ is a minimal surface of general type with
$p_g=q=2$, $K_S^2=6$ and Albanese map of degree $2$.

Conversely, every minimal surface of general type with $p_g=q=2$,
$K_S^2=6$ and Albanese map of degree $2$ can be constructed in
this way.
\end{theoA}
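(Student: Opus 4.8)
The two assertions---the construction and its converse---are proved separately, sharing the standard invariant formulas for double covers. \emph{For the construction}, I would first produce the branch curve. Since $\mL$ is symmetric and $\mQ$ is $2$-torsion, the bundle $\mL^2\otimes\mQ$ is symmetric of type $(2,4)$, so $h^0(\mL^2\otimes\mQ)=8$ and the involution $-1$ splits $H^0$ into eigenspaces $H^0_+\oplus H^0_-$. In local coordinates at the fixed point $o$ where $-1$ acts as $(x,y)\mapsto(-x,-y)$, an even section has only even-order Taylor terms, so imposing multiplicity $\ge 4$ at $o$ on such a section costs merely the vanishing of its constant and quadratic parts, i.e. $1+3=4$ linear conditions (the cubic terms are automatically absent; note an odd section would cost $2+4=6$ conditions and $\dim H^0_-$ is too small). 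A theta-function computation shows $\dim H^0_+\ge 5$ for every $\mQ\in\mathrm{im}\,\phi_2$---this is precisely where the restriction to $\mathrm{im}\,\phi_2$ enters---so the even curves singular at $o$ form a nonempty system whose general member $D$ has reduced quartic tangent cone, i.e. an ordinary quadruple point, and is otherwise free of non-negligible singularities.

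Next set $M:=\mL\otimes\mQ^{1/2}$, so that $M^2=\mL^2\otimes\mQ=\oo_A(D)$, and let $\pi\colon X\to A$ be the double cover with $\pi_*\oo_X=\oo_A\oplus M^{-1}$. I resolve the surface singularity over $o$ by canonical resolution: blow up $\sigma\colon\widetilde A\to A$ at $o$ with exceptional curve $E$, replace $M$ by $\bar M:=\sigma^*M-2E$ and the branch by the smooth strict transform $\widetilde D=2\bar M$, and take the induced cover $S\to\widetilde A$. With $K_{\widetilde A}=E$, $\bar M^2=M^2-4=0$ and $\bar M\cdot E=2$, the formulas give $\chi(\oo_S)=\chi(\bar M^{-1})=\tfrac12(\bar M^2+\bar M\cdot K_{\widetilde A})=1$ and $K_S^2=2(K_{\widetilde A}+\bar M)^2=2(\sigma^*M-E)^2=6$. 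Since $p_g=h^0(K_{\widetilde A})+h^0(K_{\widetilde A}+\bar M)=1+h^0(M\otimes\mathfrak m_o)$ and $q=2+h^1(\bar M^{-1})$, the hypothesis $\mQ^{1/2}\neq\oo_A$ is exactly what makes $o$ a \emph{non}-base point of $|M|$, so $h^0(M\otimes\mathfrak m_o)=1$ and hence $p_g=q=2$; without it $M=\mL$ and $o$ is a base point of the $(1,2)$-pencil, yielding $p_g=q=3$. Minimality follows because $\widetilde A$ has no rational curves, so any rational curve of $S$ would dominate $E$; but the preimage of $E$ is the double cover of $E\cong\PP^1$ branched at the four points $E\cap\widetilde D$, an irreducible \emph{elliptic} curve of self-intersection $-2$, so $S$ contains no rational curve at all. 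Together with $K_S^2=6>0$ and $q=2$ this gives that $S$ is minimal of general type, and $S\to\widetilde A\to A$ is generically $2{:}1$ with $q(S)=\dim A$ and an isomorphism on $1$-forms, identifying $A=\mathrm{Alb}(S)$ and $\deg\alpha=2$.

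\emph{For the converse}, let $S$ be minimal of general type with $p_g=q=2$, $K_S^2=6$ and $\deg\alpha=2$, and put $A=\mathrm{Alb}(S)$. The degree-$2$ Albanese map carries a covering involution, biregular since $S$ is a minimal surface of general type; its Stein factorization exhibits $\alpha$ as (the minimal resolution of) a finite double cover $\pi\colon\bar X\to A$ with $\pi_*\oo_{\bar X}=\oo_A\oplus M^{-1}$ and branch $D\in|M^2|$, the singularities of $\bar X$ being constrained---because $S$ is minimal---to types whose minimal resolution introduces no $(-1)$-curve. Running the invariant computation in reverse, and writing the non-negligible singularities of $D$ as ordinary points of even multiplicities $2k_i$, the corrections to the smooth-cover values give (using $K_A=0$, $\chi(\oo_A)=0$) the relations $\tfrac12 M^2-\tfrac12\sum k_i(k_i-1)=1$ and $2M^2-2\sum(k_i-1)^2=6$. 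Subtracting yields $\sum_i(k_i-1)=1$, so $D$ has exactly one non-negligible singular point, of multiplicity $4$; then $M^2=4$, forcing $M$ to be a polarization of type $(1,2)$.

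It remains to convert $M$ into the stated data: translating the quadruple point to $o$ makes $D$ symmetric, and choosing the symmetric representative $\mL$ in the algebraic-equivalence class of $M$ writes $M=\mL\otimes\mQ^{1/2}$ with $\mQ:=M^2\otimes\mL^{-2}\in\widehat A[2]$; the symmetry of $D\in|\mL^2\otimes\mQ|$ then forces $\mQ\in\mathrm{im}\,\phi_2$, while the invariant count again excludes $\mQ^{1/2}=\oo_A$ when $\mQ=\oo_A$. I expect the \emph{main obstacle} to be precisely this singularity analysis in the converse: one must verify that minimality of $S$ restricts the branch singularities to ordinary points resolvable by a single blow-up (each $k_i\le 2$, with no worse or infinitely-near configurations) so that the clean relation $\sum(k_i-1)=1$ genuinely applies, and then extract from the symmetry of $D$ together with the structure of $\phi_2\colon A[2]\to\widehat A[2]$ that $\mQ$ lies in $\mathrm{im}\,\phi_2$ and not merely in $\widehat A[2]$. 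The direct construction's only delicate point, by comparison, is the theta-function estimate $\dim H^0_+\ge 5$ guaranteeing existence of the ordinary quadruple point.
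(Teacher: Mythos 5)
Your double-cover bookkeeping is sound and coincides with the paper's: the computations of $\chi(\oo_S)$, $K_S^2$, $p_g$, $q$, the observation that $\mQ^{1/2}\neq\oo_A$ is exactly the condition that $o$ not be a base point of $|M|$, the minimality argument via the irreducible elliptic curve $Z=\beta^*E$ with $Z^2=-2$, and the Stein-factorization-plus-canonical-resolution skeleton of the converse all match Propositions \ref{prop.invariants} and \ref{prop.branch}. The genuine gaps are exactly the technical inputs you black-box or defer, and for two of them your proposed mechanism would not succeed. For existence of the branch curve, the claim that ``a theta-function computation shows $\dim H^0_+\geq 5$ precisely for $\mQ\in\mathrm{im}\,\phi_2$'' is asserted, not proved, and even granted it does not give the statement: when $\mQ=\mQ_i\in\mathrm{im}\,\phi_2^{\,\times}$ the system $|\mL^2\otimes\mQ_i\otimes\mathcal{I}_o^4|$ consists of a \emph{unique, reducible} curve $N_j+N_k$ (Proposition \ref{prop.2L-2}), so there is no ``general member'' to which a Bertini-type genericity argument applies; and when $\mQ=\oo_A$, concluding that the general member has an \emph{ordinary} quadruple point and no other non-negligible singularity requires the paper's concrete inputs --- the nodal curves $N_i\in|L+Q_i|$ whose doubles $2N_i$ lie in the system (Bertini irreducibility), the intersection count $(2L)^2=16$ (ordinariness and the pencil structure), and the Zeuthen--Segre formula (no further singularities) --- none of which is replaced by the eigenspace count.

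In the converse, two further steps fail or are missing. First, symmetry of $D$ only forces $(-1)_A^*\mQ\cong\mQ$, i.e.\ $\mQ\in\widehat{A}[2]$, a group of order $16$; it cannot single out the order-$4$ subgroup $\mathrm{im}\,\phi_2$, so your route ``symmetry plus the structure of $\phi_2$'' has no actual mechanism. The paper's mechanism is Corollary \ref{cor.2L-3} (via Propositions \ref{prop.2L-1} and \ref{prop.2L-2}): the strict transform of any curve in $|\mL^2\otimes\mQ\otimes\mathcal{I}_o^4|$ is numerically equivalent to a fibre of the genus-$3$ fibration $|\sigma^*(2L)-4E|$, hence is a fibre or a sum of two half-fibres $\widetilde{N}_j+\widetilde{N}_k$, forcing $\mQ\in\mathrm{im}\,\phi_2$ --- an argument valid for \emph{every} $\mQ\in\widehat{A}$, with no symmetry needed. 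Second, you write the double-cover relations as equalities, which presupposes that the canonical resolution creates no $(-1)$-curves; you flag this yourself as the main obstacle but do not close it. The paper closes it by the inequality $6=K_S^2\geq K_{\bar{S}}^2=4+2\sum_i(m_i-1)$, then excludes the all-negligible case (it would give $K_S^2=4$) and the $[3,3]$-point case (a $(-1)$-curve on $\bar{S}$ would force $K_S^2=7$), leaving exactly one ordinary quadruple point. Finally, you never show that the resulting polarization is not of product type (Proposition \ref{prop.no product}, a B\'ezout argument with the fibre through the quadruple point); without this, the $(1,2)$-polarization geometry underlying all of the above --- the four base points, the curves $N_i$, Corollary \ref{cor.2L-3} --- is unavailable, and the hypotheses of the theorem are not met.
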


When $\mQ=\mQ^{1/2}=\oo_A$ we obtain instead a minimal surface with
$p_g=q=3$, see Proposition \ref{prop.invariants} and Remark
\ref{rem.pg=3}.

We use the following terminology:
\begin{itemize}
\item if $\mQ =\oo_A$ we say that $S$ is a \emph{surface of type}
$I$. Furthermore, if $\mathcal{Q}^{1/2} \notin \textrm{im} \,
\phi_2$ we say that $S$ is of type $Ia$, whereas if
$\mathcal{Q}^{1/2} \in \textrm{im} \, \phi_2^{\,\times}$  we say
that $S$ is of type $Ib;$ \item if $\mQ \in
\textrm{im}\,\phi_2^{\, \times}$ we say that $S$ is a
\emph{surface of type} $II$.
\end{itemize}

Since $q=2$, the results in \cite{Ca91} imply that the degree of the
Albanese map is a topological invariant, see Proposition
\ref{prop.degree.alb}. Therefore we may consider the moduli space
$\mathcal{M}$ of minimal surfaces of general type with $p_g=q=2$,
$K_S^2=6$ and Albanese map of degree $2$. Let $\mathcal{M}_{Ia}$,
$\mathcal{M}_{Ib}$, $\mathcal{M}_{II}$ be the algebraic subsets
whose points parameterize isomorphism classes of surfaces of type
$Ia$, $Ib$, $II$, respectively. Therefore $\mathcal{M}$ can be
written as the disjoint union
\begin{equation*}
\mathcal{M}=\mathcal{M}_{Ia} \sqcup \mathcal{M}_{Ib} \sqcup
\mathcal{M}_{II}.
\end{equation*}

Our second result is

\begin{theoB}[see Theorem \ref{thm.B}] \label{theo.B} The following holds:
\begin{itemize}
\item[$\boldsymbol{(i)}$] $\mathcal{M}_{Ia}$, $\mathcal{M}_{Ib}$,
$\mathcal{M}_{II}$ are the connected components of $\mathcal{M};$
\item[$\boldsymbol{(ii)}$]
these are also \emph{irreducible} components of the moduli space of
minimal surfaces of general type$;$
\item[$\boldsymbol{(iii)}$]
$\mathcal{M}_{Ia}, \, \mathcal{M}_{Ib}, \, \mathcal{M}_{II}$ are
generically smooth, of dimension $4, \, 4, \, 3$, respectively$;$
\item[$\boldsymbol{(iv)}$]
the general surface in $\mathcal{M}_{Ia}$ and $\mathcal{M}_{Ib}$ has
ample canonical class$;$ all surfaces in $\mathcal{M}_{II}$ have
ample canonical class.
\end{itemize}
\end{theoB}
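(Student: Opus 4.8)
The plan is to prove Theorem B by combining the explicit parametrization from Theorem A with standard deformation theory of surfaces of general type. The overall strategy splits naturally along the four assertions $(i)$–$(iv)$, and the logical backbone is this: Theorem A tells us that every surface in $\mathcal{M}$ arises from the data $(A,\mL,\mQ,\mQ^{1/2})$ of a $(1,2)$-polarized abelian surface together with a choice of torsion line bundle and square root. So the first step is to understand the dimension of the parameter space of such data and the fibers of the construction, which will yield the dimension count in $(iii)$. The moduli of $(1,2)$-polarized abelian surfaces is $3$-dimensional; the choice of the branch curve $D\in|\mL^2\otimes\mQ|$ with the prescribed ordinary quadruple point at the origin contributes the remaining moduli, while the discrete data $\mQ$ and $\mQ^{1/2}$ only affect which of the three strata we land in. I would compute $h^0$ of the relevant linear system (cut out by the condition of an ordinary quadruple point at $o$) to get the expected dimensions $4,4,3$.

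For assertion $(iii)$, the key is to show that $\mathcal{M}$ is \emph{generically smooth} of the expected dimension, which amounts to showing that for the general surface $S$ the obstruction space vanishes, i.e. $H^2(S,T_S)=0$, equivalently (by Serre duality) $H^0(S,\Omega^1_S\otimes\Omega^2_S)=0$, and that $h^1(S,T_S)$ equals the predicted dimension. The plan is to pull back the tangent sheaf computation to the double cover: using the standard exact sequence relating $T_S$ (or rather the cotangent complex) to the eigensheaf decomposition under the involution of the double cover $\pi\colon S\to A$, one expresses $H^i(S,T_S)$ in terms of cohomology on $A$ of sheaves twisted by $\mL\otimes\mQ^{1/2}$ and the branch data. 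I expect the main technical obstacle to lie precisely here: controlling $H^2(S,T_S)$ requires a careful analysis of the invariant and anti-invariant parts under the covering involution, and the presence of the quadruple point forces one to work on the blow-up $\BlAt$ rather than on $A$ itself, tracking how the singularity resolution affects the relevant line bundles and their cohomology. This is the step where the computation is genuinely delicate and where the geometry of the $(1,2)$-polarization (in particular the assumption that $\mL$ is not of product type) is used.

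For assertions $(i)$ and $(ii)$, the strategy is to show that each stratum $\mathcal{M}_{Ia}$, $\mathcal{M}_{Ib}$, $\mathcal{M}_{II}$ is both open and closed in $\mathcal{M}$. Openness follows from the generic smoothness in $(iii)$ together with the fact that the construction is flat in families: the invariants $p_g=q=2$, $K^2=6$ and degree of the Albanese map are deformation invariants (the last by Proposition~\ref{prop.degree.alb}, citing \cite{Ca91}), so a small deformation of a surface of a given type remains a surface of the same type because the discrete data $\mQ,\mQ^{1/2}\in\mathrm{im}\,\phi_2$ cannot vary continuously. Closedness is the subtler point and requires showing that a degeneration cannot jump between strata; here one argues that the distinction between $Ia$, $Ib$ and $II$ is detected by the torsion behavior of a line bundle on $A$ (whether $\mQ^{1/2}$ or $\mQ$ lies in $\mathrm{im}\,\phi_2$ or $\mathrm{im}\,\phi_2^{\times}$), which is a locally constant condition in families. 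Combining openness and closedness shows the three strata are unions of connected components; since $(iii)$ shows each is irreducible of the stated dimension, they are exactly the connected components, giving $(i)$, and being irreducible components of the full moduli space gives $(ii)$.

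Finally, assertion $(iv)$ is the most concrete. The plan is to analyze when $K_S$ fails to be ample, which happens precisely when $S$ contains a $(-2)$-curve, i.e. a smooth rational curve of self-intersection $-2$. Such a curve must map to a curve on $A$ contracted or nearly contracted by the bicanonical-type map, and since $A$ is abelian (containing no rational curves), any $(-2)$-curve on $S$ must arise from the resolution of the quadruple point or from a component of the preimage of a special configuration in the branch divisor $D$. For type $II$, where $\mQ\in\mathrm{im}\,\phi_2^{\times}$, I would show such configurations cannot occur, so $K_S$ is always ample; for types $Ia$ and $Ib$ one shows the locus where a $(-2)$-curve appears is a proper closed subset, so the \emph{general} member has ample canonical class. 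This is a direct, if careful, analysis of the geometry of $D$ and its pullback, and I expect it to be more routine than the deformation-theoretic heart of the proof in $(iii)$.
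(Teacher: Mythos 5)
There are two genuine gaps, one of which is fatal. Your strategy for $(iii)$ is to prove generic smoothness by showing $H^2(S,T_S)=0$, ``equivalently $H^0(S,\Omega^1_S\otimes\Omega^2_S)=0$''. This can never hold for the surfaces at hand: since $q(S)=2$ and $p_g(S)=2$, one can take a nonzero holomorphic $1$-form $\eta$ and a nonzero canonical section $s$, and $\eta\otimes s$ is a nonzero section of $\Omega^1_S\otimes\omega_S$, so $h^2(S,T_S)=h^0(S,\Omega^1_S\otimes\omega_S)>0$. In fact Riemann--Roch gives $\chi(T_S)=\tfrac{7K_S^2-5c_2(S)}{6}=2$ and $h^0(S,T_S)=0$, so $h^2(S,T_S)=h^1(S,T_S)+2\geq 5$: the obstruction space is large, and unobstructedness cannot be read off from it. The paper's route is different and unavoidable here: it computes $h^1(S,T_S)$ exactly ($4$ in case $I$, $3$ in case $II$, Proposition \ref{prop.h1-T}) using the \emph{finite} double cover $\beta\colon S\to B$ of the blow-up, the exact sequence $0\to H^0(R,\mathcal{N}_\beta)\to H^1(S,T_S)\to H^1(A,T_A)$, and a bound on the image of the last map by the $3$-dimensional space of \emph{polarized} deformations of $A$ (via deformations of the pair $(\Gamma,S)$ for a pluricanonical curve $\Gamma$). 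Generic smoothness then follows by matching $h^1(S,T_S)$ with the dimension of the explicitly constructed families, with no vanishing of $H^2$ asserted or needed.

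The second gap concerns irreducibility of $\mathcal{M}_{Ia}$, $\mathcal{M}_{Ib}$, $\mathcal{M}_{II}$, which you invoke in $(i)$/$(ii)$ as ``since $(iii)$ shows each is irreducible''; but your $(iii)$ only addresses dimension and smoothness, and neither implies irreducibility. The space of pairs $(A,\mQ)$ with $\mQ\notin\mathrm{im}\,\phi_2$ is a degree-$12$ cover of the $3$-dimensional moduli space $\mathcal{A}_\Delta$ of $(1,2)$-polarized abelian surfaces, and it is irreducible if and only if the monodromy of the paramodular group $G_\Delta$ acts transitively on those $12$ torsion points; similarly for $\mathrm{im}\,\phi_2^{\times}$ and for the pairs $(\mQ,\mQ^{1/2})$ relevant to type $II$. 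This transitivity is exactly the content of the paper's Appendix (Propositions \ref{prop.monodromy}, \ref{prop.components}, \ref{prop.monodromy.2}), proved by an explicit computation with generators of $G_\Delta$, and without it each $\mathcal{M}_{\ast}$ could a priori split into several irreducible components of the stated dimension. By contrast, your openness/closedness argument for separating the three strata (torsion data being locally constant in families, since the relative $2$-torsion subschemes are finite \'etale over the base) is sound and essentially matches the paper's Proposition \ref{prop.disjoint}, and your plan for $(iv)$ via extra negligible singularities of the branch curve producing $(-2)$-curves is also the paper's argument in substance.
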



This work is organized as follows.

In Section \ref{sec.prel} we fix notation and terminology and we
prove some technical results on abelian surfaces with
$(1,2)$-polarization which are needed in the sequel of the paper.

 In the Section \ref{sec.d=2} we
give  the proof of Theorem A. Moreover we provide a description of
the canonical system $|K_S|$ in each of the three cases $Ia$, $Ib$,
$II$. It turns out that if $S$ is either of type $Ia$ or of type
$II$ then the general curve in $|K_S|$ is irreducible, whereas if
$S$ is of type $Ib$ then $|K_S|=Z + |\Phi|$, where $|\Phi|$ is a
base-point free pencil of curves of genus $3$.

Finally, Section 3 is devoted to the proof of Theorem B. Such a
proof involves the calculation of the monodromy action of the
paramodular group $G_{\Delta}$ on the set $\widehat{A}[2]$, with
$\Delta=\left(
          \begin{array}{cc}
            1 & 0 \\
            0 & 2 \\
          \end{array}
        \right).$
This is probably well-known to the experts but, at least to our
knowledge, it is nowhere explicitly written, so we dedicated to it
an Appendix.

\bigskip
\textbf{Acknowledgments} M. Penegini was partially supported by
the DFG Forschergruppe 790 \emph{Classification of algebraic
surfaces and compact complex manifolds}.

F. Polizzi was partially supported by  Progetto MIUR di Rilevante
Interesse Nazionale \emph{Geometria delle Variet$\grave{a}$
Algebriche e loro Spazi di Moduli}.

Both authors are grateful to F. Catanese for stimulating discussions
related to this work and to C. Ciliberto for pointing out the
existence of surfaces of type $II$ and for useful remarks.

Moreover, it is a pleasure to thank J. S. Ellenberg and L.
Moret-Bailly for suggesting the computation in the Appendix (see the
\verb|mathoverflow| thread
http://mathoverflow.net/questions/49970/2-torsion-line-bundles-on-abelian-varieties).

Both authors wish to thank the referee for many detailed comments and suggestions
that considerably improved the presentation of these results.

\bigskip

\textbf{Notation and conventions.} We work over the field
$\mathbb{C}$ of complex numbers.

If $A$ is an abelian variety and $\wA:= \textrm{Pic}^0(A)$ is its
dual, we denote by $o$ and $\hat{o}$ the zero point of $A$ and
$\widehat{A}$, respectively. Moreover, $A[2]$ and $\widehat{A}[2]$
stand for the subgroups of $2$-division points.

If $\mathcal{L}$ is a line bundle on $A$ we denote by
$\phi_{\mathcal{L}}$ the morphism $\phi_{\mL} \colon A \rightarrow
\wA$ given by $x \mapsto t^*_x \mL \otimes \mL^{-1}$. If
$c_1(\mL)$ is non-degenerate then $\phi_{\mL}$ is an isogeny, and
we denote by $K(\mL)$ its kernel.

A coherent sheaf $\mathcal{F}$ on $A$ is called a \emph{IT-sheaf of
index i} if
\begin{equation*}
H^j(A, \, \mathcal{F} \otimes \mathcal{Q})=0 \quad \textrm{for all } \mathcal{Q} \in \textrm{Pic}^0(A)
\quad \textrm{and } j\neq i.
\end{equation*}
If $\mathcal{F}$ is an IT-sheaf of
index $i$ and $\mathcal{P}$ it the normalized Poincar\'e bundle on $A \times \wA$, the coherent sheaf
\begin{equation*}
\widehat{\mathcal{F}}:={R}^i\pi_{\wA \, *}(\mathcal{P} \otimes
\pi^*_{A}\mathcal{F})
\end{equation*}
is a vector bundle of rank $h^i(A, \, \mathcal{F})$, called the \emph{Fourier-Mukai transform} of $\mathcal{F}$.

By ``\emph{surface}'' we mean a projective, non-singular surface $S$, and
for such a surface $\omega_S=\oo_S(K_S)$ denotes the canonical
class, $p_g(S)=h^0(S, \, \omega_S)$ is the \emph{geometric genus},
$q(S)=h^1(S, \, \omega_S)$ is the \emph{irregularity} and
$\chi(\mathcal{O}_S)=1-q(S)+p_g(S)$ is the \emph{Euler-Poincar\'e
characteristic}. If $q(S)>0$, we denote by $\alpha \colon S \to
\textrm{Alb}(S)$ the Albanese map of $S$.

Throughout the paper, we denote Cartier divisors on a variety by
capital letters and the corresponding line bundles by italic
letters, so we write for instance $\mathcal{L}=\oo_S(L)$.

If $|L|$ is any complete linear system of curves on a surface, its
base locus is denoted by $\textrm{Bs}|L|$.

If $X$ is any scheme, by ``first-order deformation" of $X$ we mean a
deformation over $\textrm{Spec}\,
\mathbb{C}[\epsilon]/(\epsilon^2)$, whereas by ``small deformation"
we mean a deformation over a disk $\mathscr{D}_r=\{ t \in \mathbb{C}
\, | \, |t| < r \}$.

In Section \ref{sec.prel} we use the following special case of
Eagon-Northcott complex. Let us consider a short exact sequence
 of sheaves on $S$ of the form
\begin{equation*}
0 \lr \mathcal{L} \lr \FF \lr \mathcal{M} \otimes \mathcal{I}_p
\to 0,
\end{equation*}
 where $\mL$, $\mathcal{M}$ are line bundles, $\FF$ is a rank $2$
vector bundle and $p$ is a point. Then the symmetric powers $S^2
\FF$ and $S^3 \FF$ fit into short exact sequences
\begin{equation*}
\begin{split}
 0 & \lr \FF \otimes \mL \lr S^2 \FF \lr \mathcal{M}^2
 \otimes \mathcal{I}^2_p\lr 0, \\
 0 & \lr S^2 \FF \otimes \mL \lr S^3 \FF \lr \mathcal{M}^3
 \otimes \mathcal{I}^3_p\lr 0.
\end{split}
\end{equation*}

\section{Abelian surfaces with $(1, \,2)$-polarization} \label{sec.prel}


In this section we prove some technical facts about abelian surfaces
with  polarization of type $(1, \, 2)$ which are needed in the
sequel of the paper. The crucial results are Proposition
\ref{prop.2L-1}, Proposition \ref{prop.2L-2}, Corollary
\ref{cor.sing.curv} and Corollary \ref{cor.restriction}. For the
statements whose proof is omitted we refer the reader to
\cite{Ba87}, \cite{HvM89}, \cite[Chapter 10]{BL04}, \cite{BPS09} and
\cite{PP10}.

Let $A$ be an abelian surface and $L$ an ample divisor on $A$ with
$L^2=4$. Then $L$ defines a positive definite line bundle $\mL:=\oo_A(L)$ on $A$, whose first Chern class is a polarization of type
$(1, \, 2)$. By abuse of notation we consider the line bundle $\mL$ itself as a polarization. Moreover we have $h^0(A, \, \mL)=2$ so the linear system
$|L|$ is a pencil.

\begin{proposition} \emph{\cite[p. 46]{Ba87}} \label{prop.barth.class}
Let $(A, \, \mL)$ be a $(1, \, 2)$-polarized abelian surface and
let $G \in |L|$. Then we are in one of the following cases:
\begin{itemize}
\item[$\boldsymbol{(a)}$] $G$ is a smooth, connected curve of
genus $3;$ \item[$\boldsymbol{(b)}$] $G$ is an irreducible curve of
geometric genus $2$, with an ordinary double point$;$
\item[$\boldsymbol{(c)}$] $G=E+F$, where $E$ and $F$ are elliptic
curves and $EF=2;$ \item[$\boldsymbol{(d)}$] $G=E+F_1+F_2$, with
$E$, $F_1$, $F_2$ elliptic curves such that $EF_1=1$, $EF_2=1$,
$F_1F_2=0$.
\end{itemize}
Moreover, in case $(c)$ the surface $A$ admits an isogeny onto a
product of two elliptic curves, and the polarization of $A$ is the
pull-back of the principal product polarization, whereas in case
$(d)$ the surface $A$ itself is a product $E \times F$ and
$\mL=\mathcal{O}_A(E+2F)$.
\end{proposition}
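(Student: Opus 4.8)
The plan is to start from the adjunction formula. Since $A$ is abelian we have $K_A=0$, so for every $G\in|L|$ the arithmetic genus is $p_a(G)=1+\tfrac12 G^2 = 1+\tfrac12 L^2 = 3$; thus every member of the pencil is a (possibly singular or reducible) curve of arithmetic genus $3$ with $G^2=4$. I would then split the discussion according to whether $G$ is irreducible or not, and in each case combine this numerical constraint with the two basic facts about curves on an abelian surface: an irreducible curve $C$ is nef with $C^2=2p_a(C)-2\geq 0$, and any non-constant morphism from a smooth projective curve to $A$ factors, up to translation, through the Jacobian of the source and then through a homomorphism whose image is a smooth abelian subvariety.

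\emph{Irreducible members.} Let $\nu\colon\widetilde{G}\to A$ be the normalization of an irreducible $G\in|L|$ and write $g=p_g(G)$, so that $\delta:=p_a(G)-g=3-g$ is the total $\delta$-invariant. First I would show $g\geq 2$: if $g=0$ then $\widetilde{G}=\mathbb{P}^1$ would map non-constantly to $A$, which is impossible, while if $g=1$ the factorization recalled above would force $G$ to be a smooth elliptic curve, contradicting $G^2=4\neq 0$. Hence $\delta\leq 1$. If $\delta=0$ then $G$ is smooth of genus $3$, which is case $(a)$. If $\delta=1$ then $G$ has a single double point, and to see it is an \emph{ordinary} node I would use that the composite $\widetilde{G}\hookrightarrow\mathrm{Jac}(\widetilde{G})\to A$ is a closed immersion (Abel--Jacobi, using $g\geq 2$) followed by an isogeny, hence unramified; therefore both local branches at the singular point are smooth, and since $\delta=1$ they must meet transversally, giving an ordinary double point. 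This is case $(b)$.

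\emph{Reducible members.} Writing $G=\sum_i a_i C_i$ with the $C_i$ distinct and irreducible, and using that $G=L$ is ample (so $G\cdot C_i>0$ for every $i$) together with $G^2=4$ and the nonnegativity of all the numbers $C_i^2$ and $C_iC_j$, I would enumerate the very few numerical possibilities. The key geometric exclusion is the configuration $G=E+C$ with $E$ elliptic and $C$ irreducible of arithmetic genus $2$ (so $C^2=2$, $EC=1$): projecting to the quotient elliptic curve $A/E$, the induced map $C\to A/E$ would have degree $EC=1$ and hence exhibit $C$ as birational to an elliptic curve, which (again by the subtorus factorization, forbidding singular curves with elliptic normalization) is impossible for $C^2=2$. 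After discarding this case, the only survivors are $G=E+F$ with $E,F$ elliptic and $EF=2$, and $G=E+F_1+F_2$ with $F_1,F_2$ elliptic, $F_1F_2=0$ and $EF_1=EF_2=1$ (allowing $F_1=F_2$, which accounts for the non-reduced members). These are cases $(c)$ and $(d)$.

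Finally, for the structural statements I would use the addition morphism. In case $(c)$, $EF=2>0$ means $E$ and $F$ are not parallel, so $E\times F\to A$, $(x,y)\mapsto x+y$, is an isogeny, and a direct comparison of first Chern classes identifies $\mathcal{L}$ with the pullback of the principal product polarization. In case $(d)$, $EF_1=1$ forces the analogous addition map $E\times F_1\to A$ to have degree $EF_1=1$, hence to be an isomorphism, so $A$ is itself a product, and reading off the class of $G=E+F_1+F_2$ in terms of the two factors gives $\mathcal{L}=\mathcal{O}_A(E+2F)$. I expect the main obstacle to lie not in the numerology but precisely in the two geometric inputs where intersection numbers alone are insufficient: ruling out cusps and tacnodes in the irreducible singular case, and excluding the elliptic-plus-genus-two configuration among the reducible ones. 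Both are handled by the same tool, namely the rigidity of maps from curves into $A$ via their Jacobians, which is the point I would be most careful to get right.
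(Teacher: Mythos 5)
The paper itself gives no proof of this statement---it is imported verbatim from Barth \cite[p.~46]{Ba87}---so your argument has to stand on its own, and most of it does. Adjunction giving $p_a(G)=3$, the treatment of irreducible members (no rational curves on $A$; curves of geometric genus $1$ are smooth translates of subtori, forcing $g\geq 2$; the factorization of the normalization as Abel--Jacobi embedding followed by an isogeny, making it unramified and turning $\delta=1$ into an ordinary node), and the exclusion of $E+C$ with $C^2=2$, $EC=1$ are all correct. The genuine gap is in the reducible enumeration: ampleness of $L$ together with nonnegativity of the numbers $C_i^2$, $C_iC_j$ does \emph{not} reduce the list to $(c)$, $(d)$ and $E+C$. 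At least three further configurations satisfy every constraint you impose: two disjoint curves of arithmetic genus $2$ ($C_1^2=C_2^2=2$, $C_1C_2=0$); the configuration $E_1+E_2+C$ with $E_1E_2=1$, $C^2=2$, $CE_1=CE_2=0$; and four elliptic curves $E_1+E_2+E_3+E_4$ with $E_1E_2=E_3E_4=1$ and all other products zero (check: $L\cdot E_i=1>0$ and $L^2=4$). Each must be excluded by a tool you never invoke, namely the Hodge index theorem---if $D^2>0$ and $D\cdot E=0$ for a numerically nontrivial class $E$, then $E^2<0$; apply this with $D=C_1$, $D=C$, $D=E_1+E_2$ respectively---or equivalently by the parallelism argument that an irreducible curve meeting an elliptic curve in degree zero lies in a translate of it. Without this, the sentence ``the only survivors are $(c)$ and $(d)$'' is unjustified.

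The second problem is the structural claim in case $(c)$, which as written is wrong in direction, not merely sketchy. The proposition asserts an isogeny \emph{from} $A$ \emph{onto} a product of elliptic curves under which $\mathcal{L}$ \emph{is} the pullback of the principal product polarization $\Theta$. Your addition map $\mu\colon E\times F\to A$ goes the other way, and no comparison of Chern classes can rescue it: $\mu$ has degree $|E\cap F|=E\cdot F=2$, and $\mu^{-1}(E)=E\times(E\cap F)$, so $\mu^*\mathcal{L}$ is algebraically equivalent to $2\Theta$ (self-intersection $8$, type $(2,2)$), not to $\Theta$. The correct construction is a quotient: translate so that $E$ and $F$ are subtori, put $K=E\cap F\cong\mathbb{Z}/2\mathbb{Z}$, and let $\pi\colon A\to A/K$. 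Since $K\subset E$ and $K\subset F$, the \'etale degree-$2$ map $\pi$ satisfies $\pi^*E'=E$ and $\pi^*F'=F$ for the image curves $E'$, $F'$, whence $E'\cdot F'=\tfrac12\,E\cdot F=1$; by your own (correct) degree-one addition argument---the one that settles case $(d)$---this gives $A/K\cong E'\times F'$, and $\pi$ is the required isogeny with $\pi^*(E'+F')=E+F$. With the Hodge-index input added to the enumeration and case $(c)$ repaired along these lines, your proof goes through.
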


Let us denote by $\Delta$ the matrix $\left(
                                        \begin{array}{cc}
                                          1 & 0 \\
                                          0 & 2 \\
                                        \end{array}
                                      \right)$,
and by $\mathcal{A}_{\Delta}$ the moduli space of
$(1,\,2)$-polarized abelian surfaces; then there exists a Zariski
dense set $\mathcal{U} \subset \mathcal{A}_{\Delta}$ such that,
given any $(A, \, \mL) \in \mathcal{U}$, all divisors in $|L|$ are
irreducible, i.e., of type $(a)$ or $(b)$, see \cite[Section
3]{BPS09}.

\begin{definition}
If $(A, \, \mL) \in \mathcal{U}$, we say that $\mL$ is a
\emph{general} $(1, \,2)$-polarization. If $|L|$ contains some
divisor of type $(c)$, we say that $\mL$ is a \emph{special} $(1,
\,2)$-polarization. Finally, if the divisors in $|L|$ are of type
$(d)$, we say that $\mL$ is a \emph{product} $(1,
\,2)$-polarization.
\end{definition}

In the rest of this section we assume that $\mL$ is not a product
polarization. Then $|L|$ has four distinct base points $\{e_0,\,
e_1, \, e_2, \, e_3\}$, which form an orbit for the action of
$K(\mL)\cong (\ZZ/2\ZZ)^2$ on $A$. Moreover all curves in $|L|$
are smooth at each of these base points, see \cite[Section
1]{Ba87}.

Let us denote by $(-1)_A$ the involution $x \to -x$ on $A$. Then we
say that a divisor $C$ on $A$ is \emph{symmetric} if $(-1)^*_A C =
C$. Analogously, we say that a vector bundle $\mathcal{F}$ on $A$ is
symmetric if $(-1)^*_A \FF = \FF$.

Since $\mathcal{L}$ is ample, \cite{Ba87} implies that,
up to translations, we may suppose that $\mathcal{L}$ is symmetric
and that the base locus of $|L|$ coincides with $K(\mL)$. Moreover:
\begin{itemize}
\item for all sections $s \in H^0(A, \,
\mL)$ we have $(-1)^*_{A}s=s$. In particular, all divisors in $|L|$
are symmetric;
 \item we may assume that $e_0=o$
and that  $e_1, \, e_2, \, e_3$ are $2$-division points,
 satisfying $e_1+e_2=e_3$.
\end{itemize}
There exist exactly three $2$-torsion line bundles $\mathcal{Q}_1$,
$\mathcal{Q}_2$, $\mathcal{Q}_3$ on $A$, with $\mathcal{Q}_1 \otimes
\mQ_2 = \mQ_3$, such that the linear system $|L+Q_i|$ contains an
irreducible curve which is singular at $o$. More precisely, one
shows that $h^0(A, \, \mathcal{L} \otimes \mathcal{Q}_i \otimes
\mathcal{I}_o^2)=1$ and that the unique curve $N_i \in |L+Q_i|$
which is singular at $o$ actually has an ordinary double point
there.

Denoting by $\phi_2 \colon A[2] \to \widehat{A}[2]$ the
 homomorphism induced by $\phi_{\mathcal{L}} \colon A \to \widehat{A}$ on the subgroups
 of $2$-division
 points, both $\ker \phi_2$ and $\textrm{im} \, \phi_2$ are isomorphic
 to $(\ZZ/2\ZZ)^2$. Indeed, we have
\begin{equation*}
\ker \phi_2 = K(\mL) \quad \textrm{and} \quad  \textrm{im}\,
 \phi_2=\{\oo_A, \, \mathcal{Q}_1, \, \mathcal{Q}_2, \,
 \mathcal{Q}_3 \}.
\end{equation*}
Let $\textrm{im}\, \phi_2^{\,\times}$ be the set $\{\mathcal{Q}_1,
\, \mathcal{Q}_2, \, \mathcal{Q}_3\}$.

\begin{proposition} \label{prop.Q.nodal}
Let $\mathcal{Q}, \, \mathcal{Q}' \in \widehat{A}$ and $p \in
\emph{Bs}\,|L+Q|$. Then
\begin{equation*}
h^0(A, \,\mL \otimes \mQ' \otimes \mathcal{I}_p^2)=\left\{
  \begin{array}{ll}
    0 & \emph{if }\mQ' \otimes \mQ^{-1} \notin \emph{im}\, \phi_2^{\,\times} \\
    1 & \emph{if }\mQ' \otimes \mQ^{-1} \in \emph{im}\, \phi_2^{\,\times}.
  \end{array}
\right.
\end{equation*}
\end{proposition}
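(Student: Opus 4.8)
The plan is to reduce the statement, by a translation on $A$, to a computation of jets at the origin $o$, and then to identify the finite set of line bundles for which a member of the relevant pencil acquires a singular point at $o$.

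\emph{Reduction to the origin.} Since $\mQ' \in \mathrm{Pic}^0(A)$ is invariant under translations, pulling back by $t_p$ gives $t_p^*(\mL\otimes\mQ') = \mL\otimes\phi_{\mL}(p)\otimes\mQ'$, while $t_p^*\mathcal{I}_p^2 = \mathcal{I}_o^2$, so that $h^0(A,\mL\otimes\mQ'\otimes\mathcal{I}_p^2) = h^0(A,\mL\otimes\phi_{\mL}(p)\otimes\mQ'\otimes\mathcal{I}_o^2)$. Writing $\mQ = \phi_{\mL}(a)$ (possible since $\phi_{\mL}$ is surjective), one has $\mL\otimes\mQ \cong t_a^*\mL$ and $\mathrm{Bs}|L+Q| = K(\mL) - a$; as $p$ lies in this base locus and $K(\mL) = \ker\phi_{\mL}$, we get $\phi_{\mL}(p) = \mQ^{-1}$. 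Setting $\mathcal{R} := \mQ'\otimes\mQ^{-1}$, the proposition is equivalent to the assertion that $h^0(A,\mL\otimes\mathcal{R}\otimes\mathcal{I}_o^2) = 1$ if $\mathcal{R}\in\textrm{im}\,\phi_2^{\times}$ and $0$ otherwise.

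\emph{An a priori bound.} For $\mathcal{R} = \oo_A$ the origin is a base point of $|L|$; since every curve of $|L|$ is smooth at $o$, the differential map $H^0(A,\mL)\to T_o^*A\otimes\mL_o$ is injective, whence $h^0(A,\mL\otimes\mathcal{I}_o^2)=0$. For $\mathcal{R}\neq\oo_A$ the origin is not a base point of $|L+R|$, so $h^0(A,\mL\otimes\mathcal{R}\otimes\mathcal{I}_o) = 1$ and therefore $h^0(A,\mL\otimes\mathcal{R}\otimes\mathcal{I}_o^2)\in\{0,1\}$, the value being $1$ exactly when the unique member of $|L+R|$ through $o$ is singular there. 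Since $h^0(A,\mL\otimes\mQ_i\otimes\mathcal{I}_o^2)=1$ is already known, the three bundles of $\textrm{im}\,\phi_2^{\times}$ belong to the jumping locus $\Sigma := \{\mathcal{R}\in\wA : h^0(A,\mL\otimes\mathcal{R}\otimes\mathcal{I}_o^2)\ge 1\}$.

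\emph{The main point.} It remains to prove the reverse inclusion $\Sigma\subseteq\textrm{im}\,\phi_2^{\times}$, that is, that no further line bundle — in particular no non-torsion one — produces a curve of $|L+R|$ singular at $o$. I would realize $\Sigma$ as a degeneracy locus via Fourier--Mukai: applying the transform to $0\to\mL\otimes\mathcal{I}_o^2\to\mL\to\mL\otimes(\oo_A/\mathcal{I}_o^2)\to 0$ exhibits $\Sigma$ as the locus on $\wA$ where a morphism $\widehat{\mathcal{L}}\to J$ of bundles of ranks $2$ and $3$ drops rank, with $J$ the second jet of the Poincar\'e bundle along $\{o\}\times\wA$; a Chern class computation then shows $\Sigma$ is finite of length $3$. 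Combined with $\textrm{im}\,\phi_2^{\times}\subseteq\Sigma$ and $\#\,\textrm{im}\,\phi_2^{\times}=3$, this forces $\Sigma = \textrm{im}\,\phi_2^{\times}$. A more transparent handling of the $2$-torsion case is to note that $\mL\otimes\mathcal{R}$ is symmetric precisely when $\mathcal{R}$ is $2$-torsion, and to compute the sign by which $(-1)_A$ acts on the fibre at $o$: a double point at $o$ can occur only when this sign is $+1$, which among the $2$-torsion twists singles out exactly $\mQ_1,\mQ_2,\mQ_3$. The main obstacle is precisely this upper bound on $\Sigma$: one must exclude the non-symmetric (hence non-torsion) twists, for which the parity argument is unavailable, and it is here that the global Fourier--Mukai count (or, equivalently, a semicontinuity argument bounding the length of $\Sigma$) is essential.
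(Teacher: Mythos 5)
Your reduction to the origin and the a priori bound are correct, and they coincide with the paper's first (and essentially only) step: after translating by $p$, the paper simply quotes the classification, stated just before the proposition and imported from the references (\cite{Ba87}, \cite[Chapter 10]{BL04}), that $h^0(A,\mL\otimes\mathcal{R}\otimes\mathcal{I}_o^2)\neq 0$ precisely for $\mathcal{R}\in\mathrm{im}\,\phi_2^{\,\times}$. What you attempt instead is a self-contained proof of that imported fact via Fourier--Mukai, which is a genuinely different and more ambitious route, and your enumerative arithmetic is in fact right: the jet bundle $J$ is an iterated extension of three copies of $\mathcal{P}|_{\{o\}\times\wA}=\oo_{\wA}$, so $c(J)=1$, and Mukai's isomorphism $\phi_{\mL}^*\widehat{\mL}\cong(\mL^{-1})^{\oplus 2}$ gives $c_1(\widehat{\mL})^2-c_2(\widehat{\mL})=4-1=3$, the expected count.

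There is, however, a genuine gap, and it sits exactly at the step you yourself call ``the main obstacle''. The Thom--Porteous formula computes the class of the locus where $\widehat{\mL}\to J$ drops rank only under the hypothesis that this locus has the expected codimension $2$, i.e.\ that $\Sigma$ is finite; your claim that ``a Chern class computation then shows $\Sigma$ is finite of length $3$'' has the logic backwards, since the computation yields the length $3$ only after finiteness is known and says nothing if $\Sigma$ contains a curve. Finiteness is not formal: it is precisely where the standing hypothesis of Section 1, that $\mL$ is not a product polarization, must enter, and your proposal never invokes it. Indeed, for a product polarization the statement itself is false: the moving part of $|L|$ has non-reduced members, every point of which is a singular point, and one checks directly that $\Sigma$ then contains positive-dimensional families; so no purely cohomological argument can close the gap. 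A correct finiteness argument runs as follows: when $\mL$ is not of product type every member of $|L|$ is reduced (a non-reduced member $mD+C'$ forces, by evenness of self-intersection and the Index theorem, an elliptic curve $D$ with $LD=1$, contradicting \cite[Lemma 10.4.6]{BL04}); since the general member is smooth, the singular points of members of the pencil form a finite set, and $\Sigma$ lies in its image under $\phi_{\mL}$, because a curve of $|L+R|$ singular at $o$ translates to a member of $|L|$ singular at a point of $\phi_{\mL}^{-1}(\mathcal{R})$. With that supplied, your count of $3$ together with the inclusion $\mathrm{im}\,\phi_2^{\,\times}\subseteq\Sigma$ does close the argument; your alternative parity argument, as you concede, treats only the $2$-torsion twists and cannot replace this step.
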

\begin{proof}
Since $p \in \textrm{Bs}\,|L+Q|$, translating by $p$ we see that
$h^0(A, \,\mL \otimes \mQ' \otimes \mathcal{I}_p^2) \neq 0$ if and
only if $h^0(A, \,\mL \otimes \mQ' \otimes \mQ^{-1} \otimes
\mathcal{I}_o^2) \neq 0.$ Now the claim follows because this holds
precisely when $\mQ' \otimes \mQ^{-1} \in \textrm{im}\,
\phi_2^{\,\times}$.
\end{proof}

For any $\mathcal{Q} \in \widehat{A}$, let us consider the linear
system $|\mL^2 \otimes \mathcal{Q} \otimes
\mathcal{I}_o^4|:=\mathbb{P}H^0(A, \, \mL^2 \otimes \mathcal{Q}
\otimes \mathcal{I}_o^4)$ consisting of the curves in $|2L+Q|$
having a point of multiplicity at least $4$ at $o$. We first analyze
the case $\mQ=\oo_A$.

\begin{proposition} \label{prop.2L-1}
We have $h^0(A, \, \mathcal{L}^2 \otimes \mathcal{I}_o^4)=2$, that
is the linear system $|\mL^2 \otimes \mathcal{I}_o^4| \subset |2L|$
is a pencil. Moreover, if $C \in |\mL^2 \otimes \mathcal{I}_o^4|$
then we are in one of the following cases:
\begin{itemize}
\item[$\boldsymbol{(a)}$] $C$ is an irreducible curve of geometric
genus $3$, with an ordinary quadruple point $($this corresponds to
the general case$);$ \item[$\boldsymbol{(b)}$] $C$ is an
irreducible curve of geometric genus $2$, with an ordinary
quadruple point and an ordinary double point$;$
\item[$\boldsymbol{(c)}$] $C=2C'$, where $C'$ is an irreducible
curve of geometric genus $2$ with an ordinary double point$;$
\item[$\boldsymbol{(d)}$] $\mathcal{L}$ is a special
$(1,\,2)$-polarization and $C=2C'$, where $C'$ is the union of two
elliptic curves intersecting transversally in two points.
\end{itemize}
\end{proposition}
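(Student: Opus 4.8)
The plan is to establish the dimension statement $h^0(A,\mathcal{L}^2\otimes\mathcal{I}_o^4)=2$ first by a Riemann--Roch plus vanishing argument, and then to classify the members of the pencil by analyzing how the quadruple point can degenerate. Let me sketch both halves.

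For the dimension count, I would begin from $h^0(A,\mathcal{L}^2)=\chi(\mathcal{L}^2)=\tfrac12(2L)^2=8$, using that $\mathcal{L}^2$ is ample so the higher cohomology vanishes by Kodaira. Imposing a point of multiplicity $4$ at $o$ is $\binom{4+1}{2}=10$ linear conditions on a curve in $|2L|$, but these are far from independent. The clean way to see that exactly $8-6=2$ sections survive is to use the Fourier--Mukai / Eagon--Northcott machinery flagged in the introduction. Indeed, the symmetric-power exact sequences recorded in the Notation,
\begin{equation*}
0\lr S^2\FF\otimes\mL\lr S^3\FF\lr \mathcal{M}^3\otimes\mathcal{I}^3_p\lr 0,
\end{equation*}
are precisely designed to control $H^0$ of powers of an ideal sheaf; I would produce the relevant rank-$2$ bundle $\FF$ (a Fourier--Mukai transform of $\mathcal{L}$, symmetric by the choices made above) as an extension $0\to\mathcal{L}\to\FF\to\mathcal{M}\otimes\mathcal{I}_o\to 0$ with $\mathcal{L}\otimes\mathcal{M}\cong\mathcal{L}^2$, so that $S^2\FF$ surjects onto $\mathcal{L}^2\otimes\mathcal{I}_o^2$ and $S^3\FF$ onto $\mathcal{L}^3\otimes\mathcal{I}_o^3$. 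Chasing cohomology and invoking $\chi$-computations on the abelian surface should pin down $h^0=2$ and, simultaneously, show that a \emph{general} member has an honest ordinary quadruple point (the tangent cone is a generic quartic form, hence four distinct lines). I expect this identification of $\FF$ and the verification that the quadruple point is \emph{ordinary} in the general case to be the main technical obstacle, since it requires knowing the local structure of the sections near $o$ rather than just their dimension.

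Once the pencil is in hand, I would classify its members geometrically. The key constraint is numerical: a curve $C\in|2L|$ has arithmetic genus $p_a=\tfrac12 C(C+K_A)+1=\tfrac12(2L)^2+1=9$, and an ordinary quadruple point drops the geometric genus by $\binom{4}{2}=6$, leaving genus $3$ on the normalization — case $(a)$. To find the other cases I would run the member $C$ through the possible reducible/degenerate configurations compatible with being in the pencil and symmetric under $(-1)_A$. If $C$ stays irreducible but acquires one further node, the genus drops to $2$ — case $(b)$. If $C$ becomes non-reduced, writing $C=2C'$ forces $C'\in|L|$, at which point Proposition \ref{prop.barth.class} takes over: a general $C'\in|L|$ is smooth of genus $3$, but the doubled curve $2C'$ must carry multiplicity $4$ at $o$, which forces $C'$ to pass doubly — i.e.\ singularly — through $o$, landing in Barth's case $(b)$ (a genus-$2$ curve with one node) and giving case $(c)$; when $\mathcal{L}$ is special, $C'$ can instead be a type-$(c)$ divisor $E+F$, giving case $(d)$.

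The structural point tying everything together is that the four base points $\{e_0,e_1,e_2,e_3\}=K(\mL)$ are permuted by translation, and $o=e_0$ is distinguished as the quadruple-point locus; I would use the base-point-free pencil $|L|$ and the relation $e_1+e_2=e_3$ to control where the \emph{extra} singularities in cases $(b)$ and $(c)$ can sit, and to rule out configurations not listed. The honest ordinary double point in cases $(b)$, $(c)$ I would get the same way the nodal curves $N_i\in|L+Q_i|$ were obtained above (via $h^0(\mathcal{L}\otimes\mathcal{Q}_i\otimes\mathcal{I}_o^2)=1$), so much of the singularity analysis reduces to results already proved. The genuinely delicate step remains the normalization-genus bookkeeping together with confirming ordinariness of the singularities, which I would settle by examining the tangent cones; everything else is a finite check against Proposition \ref{prop.barth.class}.
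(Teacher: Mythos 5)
Your proposal has a genuine gap at its core: the Eagon--Northcott/Fourier--Mukai machinery you invoke cannot compute $h^0(A,\mathcal{L}^2\otimes\mathcal{I}_o^4)$. With an extension $0\to\mathcal{L}_1\to\mathcal{F}\to\mathcal{M}\otimes\mathcal{I}_o\to 0$, the symmetric-power sequences surject $S^n\mathcal{F}$ onto $\mathcal{M}^n\otimes\mathcal{I}_o^n$, so the twist and the ideal power always march in step: $S^2\mathcal{F}$ reaches $\mathcal{M}^2\otimes\mathcal{I}_o^2$ and $S^3\mathcal{F}$ reaches $\mathcal{M}^3\otimes\mathcal{I}_o^3$ (this is exactly how the paper computes $h^0(\mathcal{L}^2\otimes\mathcal{Q}^{-1}\otimes\mathcal{I}_o^3)$ later, in Proposition \ref{prop.restriction}). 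To reach $\mathcal{I}_o^4$ you would need $\mathcal{M}^4\cong\mathcal{L}^2$, i.e.\ a line bundle $\mathcal{M}$ with $\mathcal{M}^2\cong\mathcal{L}$; since $\mathcal{L}$ is a $(1,2)$-polarization, no such $\mathcal{M}$ exists ($c_1(\mathcal{M})^2$ would have to equal $1$, forcing $\mathcal{L}$ to be of type $(2,2)$). So the step you yourself flag as ``the main technical obstacle'' is not merely delicate --- the proposed tool is structurally unable to deliver it. The paper's argument is entirely different and much more elementary: the three curves $2N_i$, doubles of the nodal curves $N_i\in|L+Q_i|$, lie in $|\mathcal{L}^2\otimes\mathcal{I}_o^4|$ (note $2Q_i\sim 0$), which gives $h^0\geq 2$ and, via Bertini, irreducibility of the general member; then blowing up $o$, the strict transform of a general member has self-intersection $(2L)^2-16=0$, and an irreducible curve moving with self-intersection zero moves in at most a pencil, giving $h^0=2$. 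The same blow-up shows the quadruple point of the general member is ordinary (the pencil upstairs is base-point free, so the general strict transform is smooth and meets $E$ transversally in four points).

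There are also two problems in your classification half. First, $C=2C'$ does not force $C'\in|L|$: it only forces $\mathcal{O}_A(C')^{2}\cong\mathcal{L}^2$, i.e.\ $C'\in|L+Q|$ for some $2$-torsion $\mathcal{Q}$. In fact $C'\in|L|$ is impossible, because $o$ is a base point of $|L|$ and all members of $|L|$ are smooth there; the actual non-reduced members are precisely the $2N_i$ with $N_i\in|L+Q_i|$, $\mathcal{Q}_i\in\mathrm{im}\,\phi_2^{\times}$ non-trivial. Second, the completeness of the list --- why an irreducible member acquires at most one extra node, why that node is ordinary and sits at a $2$-division point $e_i$, and why no other reduced reducible configurations occur in $|2L|$ --- is asserted rather than proved; the paper settles this by quoting \cite[Corollary 4.7.6]{BL04} for the curves $B_i$ of case $(b)$ and by applying the Zeuthen--Segre formula to the genus-$3$ fibration induced by the blown-up pencil to show every remaining member is smooth outside $o$. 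Your genus bookkeeping for cases $(a)$, $(b)$ and the treatment of the special polarization (case $(d)$) do agree with the paper, but without the pencil structure and the Zeuthen--Segre step they remain a heuristic enumeration rather than a proof.
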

\begin{proof}
Since the three curves $2N_i$ belong to $|\mathcal{L}^2 \otimes
\mathcal{I}^4_o|$ and each $N_i$ is irreducible, by Bertini
theorem it follows that the general element $C \in |\mathcal{L}^2
\otimes \mathcal{I}^4_o|$ is irreducible and smooth outside $o$.
On the other hand we have $(2L)^2=16$, so $C$ has an
\emph{ordinary} quadruple point at $o$. Blowing up this point, the
strict transform of $C$ has self-intersection $0$, so
$|\mathcal{L}^2 \otimes \mathcal{I}^4_o|$ is a pencil.

Assume first that $\mL$ is a general polarization. We have shown
that the general curve in $|\mathcal{L}^2 \otimes
\mathcal{I}^4_o|$ belongs to case $(a)$. In order to complete the
proof, observe that $|\mathcal{L}^2 \otimes \mathcal{I}^4_o|$
contains the following distinguished elements:
\begin{itemize}
\item three reduced, irreducible curves $B_1$, $B_2$, $B_3$ such
that $B_i$ has an ordinary quadruple point at $o$, an ordinary
double point at $e_i$ and no other singularities (see
\cite[Corollary 4.7.6]{BL04}). These curves are as in case $(b)$;
\item three non-reduced elements, namely $2N_1$, $2N_2$, $2N_3$. These
 curves are as in case $(c)$.
\end{itemize}

Moreover, all other elements of $|\mathcal{L}^2 \otimes
\mathcal{I}^4_o|$ are smooth outside $o$; this can be seen by
blowing-up $o$ and applying the Zeuthen-Segre formula to the
fibration induced by the strict transform of the pencil, see
\cite[Section 1.2]{PP10}.

Finally, assume that $\mL$ is a special polarization. Then there is
just one more possibility, namely $C=2C'$, where $C'$ is the
translate of a reducible curve $E+F \in |L|$ by a suitable
$2$-division point. This yields case $(d)$.
\end{proof}

Let us consider now the case where $\mQ$ is non-trivial. In the
sequel, $\{i, \,j, \,k\}$ always denotes a permutation of $\{1, \,
2, \,3\}$.

\begin{proposition} \label{prop.2L-2}
Let $\mathcal{Q} \in \widehat{A}$ be non-trivial. Then
$|\mathcal{L}^2 \otimes \mQ \otimes \mathcal{I}^4_o|$ is empty,
unless $\mathcal{Q} \in
 \emph{im}\;\phi_2^{\,\times}$. More precisely, for all $i \in \{1,\, 2, \,3\}$
 we have  $h^0(A, \, \mL^2
 \otimes \mQ_i \otimes \mathcal{I}_o^4)=1$, so that
$|\mathcal{L}^2 \otimes \mQ_i \otimes \mathcal{I}^4_o|$ consists
of a unique element, namely the curve $N_j+N_k$.
\end{proposition}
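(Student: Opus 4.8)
The plan is to prove the two assertions separately: the identification of the distinguished member $N_j+N_k$, which is elementary, and the vanishing/upper bound on $h^0$, which is the real content.

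First I would show the system is nonempty exactly in the stated cases by exhibiting the candidate member. Since $\mathcal{Q}_j\otimes\mathcal{Q}_k=\mathcal{Q}_i$ for a permutation $\{i,j,k\}$ of $\{1,2,3\}$, the divisor $N_j+N_k$ lies in $|2L+Q_i|$; and as each of $N_j$, $N_k$ has an ordinary double point at $o$, the sum has multiplicity $\ge 4$ there, so $N_j+N_k\in|\mL^2\otimes\mathcal{Q}_i\otimes\mathcal{I}_o^4|$ and $h^0(A,\,\mL^2\otimes\mathcal{Q}_i\otimes\mathcal{I}_o^4)\ge 1$. For the identification I would note that any $C$ in this system containing $N_j$ as a component has residual curve $C'\in|L+Q_k|$ with $\textrm{mult}_o(C')=\textrm{mult}_o(C)-\textrm{mult}_o(N_j)\ge 2$; since $N_k$ is the \emph{unique} curve of $|L+Q_k|$ singular at $o$, this forces $C'=N_k$ and $C=N_j+N_k$. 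Thus, once the upper bound $h^0\le 1$ is in hand, uniqueness and the precise description follow.

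The main obstacle is therefore the cohomological upper bound: $h^0(A,\,\mL^2\otimes\mathcal{Q}\otimes\mathcal{I}_o^4)=0$ for $\mathcal{Q}$ nontrivial with $\mathcal{Q}\notin\textrm{im}\,\phi_2^{\,\times}$, and $=1$ for $\mathcal{Q}=\mathcal{Q}_i$. I would compute this using the involution $(-1)_A$. Normalizing $\mathcal{Q}$ to be symmetric (every $2$-torsion bundle is), the bundle $\mathcal{M}:=\mL^2\otimes\mathcal{Q}$ is symmetric of type $(2,4)$ with $h^0(A,\mathcal{M})=8$, so $H^0(A,\mathcal{M})$ splits into $(-1)_A$-eigenspaces $H^0(A,\mathcal{M})^+\oplus H^0(A,\mathcal{M})^-$. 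Because $o$ is a fixed point of $(-1)_A$, the truncated jet evaluation
\[
H^0(A,\mathcal{M}) \longrightarrow \big(\oo_A/\mathcal{I}_o^4\big)\otimes\mathcal{M}_o
\]
is $(-1)_A$-equivariant. In local coordinates $x,y$ at $o$ the $10$-dimensional target splits, under $(-1)_A$, into a piece of dimension $4$ spanned by the even monomials $1,x^2,xy,y^2$ and a piece of dimension $6$ spanned by the odd monomials $x,y,x^3,x^2y,xy^2,y^3$; the action on the fibre $\mathcal{M}_o$ contributes a global sign $\epsilon_o(\mathcal{M})\in\{\pm 1\}$, so that the $+1$-eigenspace of the target is the even (resp. odd) part when $\epsilon_o(\mathcal{M})=+1$ (resp. $-1$). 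The curves with multiplicity $\ge 4$ at $o$ correspond to the kernel, which splits along the eigenspaces, so the problem reduces to comparing the eigenspace dimensions of $H^0(A,\mathcal{M})$ with those of the jet space.

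The delicate point, and the step I expect to be hardest, is to pin down the pair $\big(\dim H^0(A,\mathcal{M})^+,\dim H^0(A,\mathcal{M})^-\big)$ and the sign $\epsilon_o(\mathcal{M})$ for each relevant $\mathcal{Q}$, and to show that the eigenspace-restricted evaluation maps have maximal rank, so that the kernel has the expected dimension $2$, $1$, or $0$. These data are governed by the theory of symmetric line bundles and their characteristics (see \cite[Chapter 10]{BL04}): the distribution of the $\pm 1$ values of the normalized bundle over the sixteen points of $A[2]$ distinguishes the three regimes $\mathcal{Q}=\oo_A$, $\mathcal{Q}\in\textrm{im}\,\phi_2^{\,\times}$, and $\mathcal{Q}\notin\textrm{im}\,\phi_2$, and as a consistency check the case $\mathcal{Q}=\oo_A$ must reproduce $h^0=2$, already known from Proposition \ref{prop.2L-1}. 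An alternative route would be to blow up $o$ and restrict $\mathcal{M}$ to the general fibre of the genus-$3$ fibration produced in Proposition \ref{prop.2L-1}: a nonzero section forces $\mathcal{Q}$ to be trivial on that fibre, which happens precisely when $\mathcal{Q}\in\textrm{im}\,\phi_2$. This is geometrically transparent but requires careful control of the base points at $e_1,e_2,e_3$ and of the reducible members $2N_1,2N_2,2N_3$.
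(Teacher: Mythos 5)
Your reduction of the uniqueness statement to the upper bound $h^0\le 1$ is fine, and the exhibited member $N_j+N_k$ is correct. But the route you propose for the main cohomological bound has two genuine gaps. First, the eigenspace machinery only exists when $\mathcal{M}=\mL^2\otimes\mQ$ is symmetric, i.e. when $(-1)_A^*\mQ=\mQ^{-1}=\mQ$, i.e. when $\mQ$ is $2$-torsion; the proposition, however, asserts emptiness for \emph{every} non-trivial $\mQ\in\widehat{A}$, and your normalization ``every $2$-torsion bundle is symmetric'' silently discards all $\mQ$ of order $\neq 2$, for which $H^0(A,\mathcal{M})$ carries no $(-1)_A$-action at all (your ``three regimes'' conflate $\mQ\notin\textrm{im}\,\phi_2$ with the twelve $2$-torsion classes outside $\textrm{im}\,\phi_2$). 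Second, even for $2$-torsion $\mQ$, the step you yourself flag as hardest --- that the eigenspace-restricted $4$-jet evaluation maps have maximal rank --- is the entire content of the proposition, and no argument is offered. This is not a routine verification: the unrestricted evaluation map $H^0(A,\mathcal{M})\to(\oo_A/\mathcal{I}_o^4)\otimes\mathcal{M}_o$ demonstrably \emph{fails} to have maximal rank (for $\mQ=\oo_A$ it maps an $8$-dimensional space to a $10$-dimensional one with $2$-dimensional kernel, by Proposition \ref{prop.2L-1}), so maximal rank on eigenspaces is precisely the kind of claim that requires proof, and the proposal contains none.

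The one-sentence ``alternative route'' at the end is in substance the paper's actual proof, and it is much less delicate than you suggest. The paper blows up $o$ and observes that the strict transform $\widetilde{C}$ of any member $C$ of $|\mL^2\otimes\mQ\otimes\mathcal{I}_o^4|$ is numerically equivalent to a fibre of the genus-$3$ fibration induced by the base-point free pencil $|\sigma^*(2L)-4E|$; being effective with zero intersection with fibres, $\widetilde{C}$ is supported on fibres, and since $\mQ$ is non-trivial it cannot be a whole fibre, so the classification of the non-reduced members of the pencil in Proposition \ref{prop.2L-1} forces $\widetilde{C}=\widetilde{N}_j+\widetilde{N}_k$, whence $\mQ=\mQ_i$. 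No control of base points is needed, and the only input is Proposition \ref{prop.2L-1} --- not the restriction statement your sketch invokes (``$\mQ$ trivial on the fibre iff $\mQ\in\textrm{im}\,\phi_2$''), whose hard direction is itself a non-trivial result, proved in the paper only later (Proposition \ref{prop.restriction}) via the vector bundle $\FF$, though no circularity would arise from using it. As written, then, the proposal does not constitute a proof: its main route is incomplete at its crucial step and does not cover general $\mQ$, while its viable route is left as an undeveloped remark.
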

\begin{proof}
Assume that there exists an effective curve $C \in |\mathcal{L}^2
\otimes \mQ \otimes \mathcal{I}^4_o|$. Blowing up the point $o$, the
strict transform $\widetilde{C}$ of $C$ is numerically equivalent to
the strict transform of a general element of the pencil
$|\mathcal{L}^2 \otimes \mathcal{I}^4_o|$. Since $\mathcal{Q}$ is
non-trivial, by the description of the non-reduced elements of $|\mL^2
\otimes \mathcal{I}_o^4|$ given in Proposition \ref{prop.2L-1} we
must have $\widetilde{C}=\widetilde{N}_j+\widetilde{N}_k$, so
$\mathcal{Q}=\mathcal{Q}_i$.
\end{proof}

Summing up, Propositions \ref{prop.2L-1} and \ref{prop.2L-2} imply

\begin{corollary} \label{cor.2L-3}
Let $\mathcal{Q} \in \widehat{A}$. Then $|\mathcal{L}^2 \otimes
\mQ \otimes \mathcal{I}^4_o|$ is empty, unless $\mathcal{Q} \in
\emph{im}\;\phi_2$. In this case
\begin{equation*}
\dim |\mathcal{L}^2 \otimes \mQ \otimes \mathcal{I}^4_o|= \left\{
\begin{array}{ll}
    1 & \hbox{\emph{if} } \mathcal{Q}=\oo_A \\
    0 & \hbox{\emph{if} } \mathcal{Q} \in \emph{im}\, \phi_2^{\,\times}. \\
\end{array}
\right.
\end{equation*}
\end{corollary}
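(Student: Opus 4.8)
The plan is to read the statement off directly from the two preceding propositions, so no new geometric input is needed; the only thing to assemble is the passage between the vector space $H^0$ and the projective linear system it defines. Recall that for any line bundle $\mathcal{M}$ one has $\dim |\mathcal{M}| = h^0(A, \, \mathcal{M}) - 1$, with the convention that $|\mathcal{M}|$ is empty exactly when $h^0 = 0$. Thus every claim of the corollary is a translation of an $h^0$-computation already carried out.

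First I would dispose of the emptiness assertion. If $\mQ = \oo_A$ there is nothing to check, since $\oo_A$ belongs to $\emph{im}\, \phi_2$ and, by Proposition \ref{prop.2L-1}, the system is in fact nonempty (indeed a pencil). If $\mQ$ is non-trivial, then Proposition \ref{prop.2L-2} states that $|\mL^2 \otimes \mQ \otimes \mathcal{I}^4_o|$ is empty unless $\mQ \in \emph{im}\, \phi_2^{\,\times}$. Combining the two cases gives precisely the stated dichotomy: the system is nonempty if and only if $\mQ \in \emph{im}\, \phi_2 = \{\oo_A\} \cup \emph{im}\, \phi_2^{\,\times}$.

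Then I would record the dimension in each nonempty case. For $\mQ = \oo_A$, Proposition \ref{prop.2L-1} gives $h^0(A, \, \mL^2 \otimes \mathcal{I}^4_o) = 2$, hence the system has dimension $1$. For $\mQ = \mQ_i \in \emph{im}\, \phi_2^{\,\times}$, Proposition \ref{prop.2L-2} gives $h^0(A, \, \mL^2 \otimes \mQ_i \otimes \mathcal{I}^4_o) = 1$, hence dimension $0$, the unique member being $N_j + N_k$. Since the result is a pure corollary of the two propositions, there is essentially no obstacle to overcome; the only point deserving a word of care is the bookkeeping that $\oo_A$ lies in $\emph{im}\, \phi_2$ but not in $\emph{im}\, \phi_2^{\,\times}$, so that the trivial case (handled by Proposition \ref{prop.2L-1}) and the non-trivial cases (handled by Proposition \ref{prop.2L-2}) together cover every element of $\emph{im}\, \phi_2$ with neither overlap nor gap.
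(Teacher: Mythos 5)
Your proposal is correct and matches the paper exactly: the paper gives no separate proof of this corollary, simply noting that it is obtained by ``summing up'' Propositions \ref{prop.2L-1} and \ref{prop.2L-2}, which is precisely the case assembly and $h^0$-to-dimension translation you carry out.
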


\begin{corollary} \label{cor.sing.curv}
Let $(A, \, \mL)$ be a $(1,\,2)$-polarized abelian surface, and
let $C$ be a \emph{reduced} divisor numerically equivalent to $2L$
which has a quadruple point at $p \in A$. Then $C$ belongs to one
of the following cases, all of which occur:
\begin{itemize}
\item[$\boldsymbol{(i)}$] $C$ is irreducible, with an ordinary
quadruple point at $p$ and no other singularities$;$
\item[$\boldsymbol{(ii)}$] $C$ is irreducible, with an ordinary
quadruple point at $p$, an ordinary double point and no other
singularities$;$ \item[$\boldsymbol{(iii)}$] $C=C_1+ C_2$, where
$C_i$ is irreducible and numerically equivalent to $L$, with an
ordinary double point at $p$ and no other singularities. Since
$C_1C_2=4$, the singularity of $C$ at $p$ is again an ordinary
quadruple point.
\end{itemize}
\end{corollary}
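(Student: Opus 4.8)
The plan is to reduce the statement to the linear systems already classified in Propositions \ref{prop.2L-1} and \ref{prop.2L-2}, by translating the quadruple point to the origin. Write $\mathcal{O}_A(C)=\mathcal{L}^2\otimes\mathcal{Q}$ for a suitable $\mathcal{Q}\in\widehat{A}=\textrm{Pic}^0(A)$, which is possible because $C\equiv 2L$. Let $t_p$ denote the translation $x\mapsto x+p$ and set $C':=t_p^*C$, the translate of $C$ that carries the quadruple point to $o$. Since translations act trivially on numerical classes and on $\textrm{Pic}^0(A)$, we still have $C'\equiv 2L$, while $\mathcal{O}_A(C')=\mathcal{L}^2\otimes\mathcal{Q}'$ with $\mathcal{Q}'=\mathcal{Q}\otimes\phi_{\mathcal{L}}(2p)$; the exact value of $\mathcal{Q}'$ will be immaterial. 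Thus $C'$ is a reduced member of $|\mathcal{L}^2\otimes\mathcal{Q}'\otimes\mathcal{I}_o^4|$, and since this system is nonempty, Corollary \ref{cor.2L-3} forces $\mathcal{Q}'\in\textrm{im}\,\phi_2$.

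Now I split according to the two possibilities allowed by Corollary \ref{cor.2L-3}. If $\mathcal{Q}'=\mathcal{O}_A$, then $C'$ belongs to the pencil $|\mathcal{L}^2\otimes\mathcal{I}_o^4|$ of Proposition \ref{prop.2L-1}; its non-reduced members are exactly $2N_1,2N_2,2N_3$ (together with the extra product member when $\mathcal{L}$ is special), so a reduced $C'$ must be of type $(a)$ or $(b)$. Translating back by $t_{-p}$ these yield cases $(i)$ and $(ii)$, the quadruple point being ordinary by the very statement of Proposition \ref{prop.2L-1}. If instead $\mathcal{Q}'=\mathcal{Q}_i\in\textrm{im}\,\phi_2^{\,\times}$, then Proposition \ref{prop.2L-2} says the system reduces to the single curve $N_j+N_k$; translating back we obtain $C=C_1+C_2$ with $C_1=t_{-p}^*N_j$ and $C_2=t_{-p}^*N_k$, each numerically equivalent to $L$ and with an ordinary double point at $p$. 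This is case $(iii)$. That all three cases occur is witnessed by the curves just produced: a general member of $|\mathcal{L}^2\otimes\mathcal{I}_o^4|$ for $(i)$, one of the curves $B_i$ of Proposition \ref{prop.2L-1} for $(ii)$, and $N_j+N_k$ for $(iii)$.

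The only delicate point, where the real content lies, is checking that the quadruple point of $C_1+C_2$ in case $(iii)$ is genuinely ordinary. For this I would argue locally at $p$: each $C_i$ has an ordinary double point there, hence $\textrm{mult}_p(C_i)=2$ with two distinct tangent directions. The local intersection multiplicity satisfies $I_p(C_1,C_2)\ge \textrm{mult}_p(C_1)\,\textrm{mult}_p(C_2)=4$, while globally $C_1C_2=L^2=4$; hence the whole intersection is concentrated at $p$ and equality holds in the multiplicity bound, which forces the tangent cones of $C_1$ and $C_2$ to share no direction. Together with the distinctness of the two tangents of each node, this gives four pairwise distinct branches at $p$, i.e. an ordinary quadruple point, exactly as asserted.
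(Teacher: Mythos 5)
Your proof is correct and takes essentially the same route the paper intends: the corollary appears there without a written proof, as an immediate consequence of Propositions \ref{prop.2L-1} and \ref{prop.2L-2} (through Corollary \ref{cor.2L-3}) after translating the quadruple point to the origin, which is exactly your reduction. Your closing local analysis, that $C_1C_2=4$ concentrated at $p$ forces the tangent cones of the two nodes to be disjoint and hence the quadruple point to be ordinary, is precisely the justification the paper compresses into the final sentence of case $(iii)$ of the statement.
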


\begin{proposition}
There exists a rank $2$, indecomposable vector bundle  $\FF$ on $A$,
such that
\begin{equation} \label{eq.coh.F}
h^0(A, \, \FF)=1, \quad h^1(A, \, \FF)=0, \quad h^2(A, \, \FF)=0,
\end{equation}
\begin{equation*} \label{eq.chern.F}
c_1(\mathcal{F})=\mathcal{L}, \quad c_2(\mathcal{F})=1.
\end{equation*}
Moreover $\FF$ is symmetric and it is isomorphic to the unique
locally free extension of the form
\begin{equation} \label{eq.F}
0 \lr \oo_A \lr \FF \lr \mL \otimes \mathcal{I}_o \lr 0.
\end{equation}
\end{proposition}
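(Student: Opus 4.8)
The plan is to construct $\FF$ via Serre's extension construction and then read off all the stated properties from cohomology.

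\emph{Construction and local freeness.} Since $\omega_A=\oo_A$, Serre duality gives $\mathrm{Ext}^1(\mL\otimes\mathcal{I}_o,\oo_A)\cong H^1(A,\mL\otimes\mathcal{I}_o)^*$. From the sequence $0\to\mL\otimes\mathcal{I}_o\to\mL\to\CC_o\to0$, together with $h^0(\mL)=2$, $h^1(\mL)=h^2(\mL)=0$ and the fact that $o\in\textrm{Bs}\,|L|$ (so the evaluation $H^0(\mL)\to\CC_o$ is zero), I compute $h^0(\mL\otimes\mathcal{I}_o)=2$, $h^1(\mL\otimes\mathcal{I}_o)=1$ and $h^2(\mL\otimes\mathcal{I}_o)=0$. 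Hence $\mathrm{Ext}^1(\mL\otimes\mathcal{I}_o,\oo_A)\cong\CC$, so up to scalar there is a unique non-split extension of the form \eqref{eq.F}; as $\mL\otimes\mathcal{I}_o$ is not locally free the split extension is not a bundle, so this will be the unique locally free one once the middle term is checked to be a bundle. For the latter I use the local-to-global spectral sequence: a Koszul computation at $o$ gives $\underline{\mathrm{Ext}}^1(\mL\otimes\mathcal{I}_o,\oo_A)\cong\CC_o$ (a length-one skyscraper) while $\underline{\mathrm{Ext}}^0=\mL^{-1}$ has $H^1(\mL^{-1})=0$; comparison with $\dim\mathrm{Ext}^1=1$ forces the extension class to generate the stalk $\underline{\mathrm{Ext}}^1_o$, which is exactly Serre's criterion (the Cayley--Bacharach condition being vacuous for one reduced point) ensuring that $\FF$ is locally free. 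The Chern classes follow at once from \eqref{eq.F}: $c_1(\FF)=\mL$, and since $\ch(\FF)=1+\mL+\tfrac12\mL^2-[o]$ and $\mL^2=4$ one gets $c_2(\FF)=1$.

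\emph{Cohomology, the easy half.} Feeding $h^\bullet(\oo_A)=(1,2,1)$ and the values just computed into the long exact sequence of \eqref{eq.F}, the assertions \eqref{eq.coh.F} reduce to the behaviour of the two coboundary maps $\delta_0\colon H^0(\mL\otimes\mathcal{I}_o)\to H^1(\oo_A)$ (of $\CC^2$'s) and $\delta_1\colon H^1(\mL\otimes\mathcal{I}_o)\to H^2(\oo_A)$ (of lines). The map $\delta_1$ is cup product with the extension class $e$, and under Serre duality it is the pairing $H^1(\mL\otimes\mathcal{I}_o)\times\mathrm{Ext}^1(\mL\otimes\mathcal{I}_o,\oo_A)\to H^2(\oo_A)=\CC$, which is perfect; as $e\neq0$ and both sides are lines, $\delta_1$ is an isomorphism and $h^2(\FF)=0$. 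Riemann--Roch on $A$ (where $\td(A)=1$) gives $\chi(\FF)=\tfrac12(c_1^2-2c_2)=1$, so it remains only to prove $h^0(\FF)=1$, which will then also force $h^1(\FF)=0$.

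\emph{The crux: $h^0(\FF)=1$.} From \eqref{eq.F} one has $h^0(\FF)=1+\dim\ker\delta_0$, so I must show $\delta_0$ is injective. A section $t\in H^0(\mL\otimes\mathcal{I}_o)$ lies in $\ker\delta_0$ exactly when the extension \eqref{eq.F} pulled back along $t\colon\oo_A\to\mL\otimes\mathcal{I}_o$ splits, i.e. when $t^*e=0$ in $\mathrm{Ext}^1(\oo_A,\oo_A)=H^1(\oo_A)$. Because $e$ arises from $H^0(\underline{\mathrm{Ext}}^1)=H^0(\CC_o)$ and is therefore concentrated at $o$, the class $t^*e$ depends only on the $1$-jet of $t$ at $o$: it factors as $t\mapsto d_o t\in(\mathcal{I}_o/\mathcal{I}_o^2)\otimes\mL(o)\cong T^*_oA$ followed by a fixed linear map $T^*_oA\to H^1(\oo_A)$, the latter being an isomorphism precisely because $e$ generates $\underline{\mathrm{Ext}}^1_o$. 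Thus $\ker\delta_0=H^0(\mL\otimes\mathcal{I}_o^2)$, the sections of $\mL$ vanishing to order $\ge2$ at $o$. Now $\textrm{Bs}\,|L|=K(\mL)$ consists of the $L^2=4$ distinct points, so two general members of the pencil meet transversally at $o$; equivalently the differentials at $o$ of the two generators of $H^0(\mL)$ are linearly independent and $H^0(\mL\otimes\mathcal{I}_o^2)=0$. Hence $\delta_0$ is injective, $h^0(\FF)=1$ and $h^1(\FF)=0$. \textbf{This identification of $\delta_0$ with the differential at $o$ is the step demanding the most care}: the required non-degeneracy of $T^*_oA\to H^1(\oo_A)$ is a local-cohomology computation at $o$ that uses nothing beyond the Koszul description of $e$, but it is where the real content sits.

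\emph{Symmetry and indecomposability.} As $\mL$ is symmetric and $o$ is fixed by $(-1)_A$, both $\oo_A$ and $\mL\otimes\mathcal{I}_o$ are $(-1)_A^*$-invariant, so $(-1)_A^*$ acts on the line $\mathrm{Ext}^1(\mL\otimes\mathcal{I}_o,\oo_A)$ and sends the non-split class to a nonzero multiple of itself; the two extensions being isomorphic, $(-1)_A^*\FF\cong\FF$. Finally, suppose $\FF\cong\mathcal{A}\oplus\mathcal{B}$ with $\mathcal{A},\mathcal{B}$ line bundles. Since $h^0(\FF)=1$, one summand, say $\mathcal{B}$, has $h^0(\mathcal{B})=0$, so the tautological section $s_0$ (the image of $1$ under $\oo_A\hookrightarrow\FF$) lies in $H^0(\mathcal{A})$. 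But $s_0$ vanishes exactly on the length-$c_2(\FF)=1$ subscheme $\{o\}$, whereas a section of a line bundle vanishes on a divisor---empty or of pure codimension $1$---so it cannot have a non-empty zero locus of codimension $2$. This contradiction shows $\FF$ is indecomposable, and the proof is complete.
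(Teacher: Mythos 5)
Your construction-side arguments are mostly sound, and they take a genuinely different route from the paper: the paper produces $\FF$ as the Fourier--Mukai transform of the dual polarization $\mL^{*\,-1}$, reads off \eqref{eq.coh.F} and the Chern classes from Mukai's theory and \cite{BL04}, and only afterwards identifies $\FF$ with the unique locally free extension \eqref{eq.F} by quoting \cite[Propositions 2.2 and 2.4]{PP10}, where the hypothesis that $\mL$ is not of product type is used. Your computation $h^{\bullet}(\mL\otimes\mathcal{I}_o)=(2,1,0)$, the resulting $\mathrm{Ext}^1(\mL\otimes\mathcal{I}_o,\oo_A)\cong\CC$, the local-freeness argument via $H^1(A,\mL^{-1})=0$ and the local-to-global sequence, the vanishing $h^2(A,\FF)=0$ via the Serre pairing, and the symmetry and indecomposability arguments (the latter contingent on $h^0(A,\FF)=1$) are all correct. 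Two small blemishes: the Cayley--Bacharach condition for a single reduced point is \emph{not} vacuous --- it says precisely that every section of $\mL\otimes\omega_A=\mL$ vanishes at $o$, i.e. $o\in\mathrm{Bs}\,|L|$, which is what actually saves you; and $\ch(\FF)$ begins with the rank $2$, not $1$ (the conclusion $c_2(\FF)=1$ is unaffected).

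The genuine gap is exactly at the step you flag as the crux. You assert that $t\mapsto t^{*}e$ factors through $d_o t$ followed by a fixed map $T^{*}_oA\to H^1(A,\oo_A)$ which is an isomorphism ``precisely because $e$ generates $\underline{\mathrm{Ext}}^1_o$'', and that this is a local computation at $o$ using only the Koszul description of $e$. That principle is false, so no such local computation exists. Counterexample: $X=\mathbb{P}^2$, $\mathcal{M}=\oo_{\mathbb{P}^2}(1)$, $p$ any point. There too $H^1(X,\mathcal{M}^{-1})=0$, $\mathrm{Ext}^1(\mathcal{M}\otimes\mathcal{I}_p,\oo_X)\cong\CC$, and the unique non-split extension is locally free (it is $T_{\mathbb{P}^2}(-1)$), so its class generates the stalk of $\underline{\mathrm{Ext}}^1$ exactly as in your situation; but $H^1(X,\oo_X)=0$, so \emph{every} section of $\mathcal{M}\otimes\mathcal{I}_p$ lifts to the bundle, although all of them vanish only simply at $p$ (they are the lines through $p$). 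Hence $\ker\delta_0=H^0(\mathcal{M}\otimes\mathcal{I}_p)\neq H^0(\mathcal{M}\otimes\mathcal{I}_p^2)=0$, and ``the class generates the stalk'' cannot force injectivity of any map $T^{*}_pX\to H^1(X,\oo_X)$: whether $\delta_0$ is injective is an irreducibly global question. (Structurally, the local-to-global filtration only computes the symbol of the Yoneda product $e\cdot t$, which vanishes for degree reasons; the actual class $\delta_0(t)\in H^1(A,\oo_A)$ is a secondary product invisible to the local data at $o$.) To close the gap you must use the geometry of $(A,\mL)$: for instance, show that a lift $\tilde t$ of a nonzero $t$ would yield either a sub-line bundle $\oo_A(D)\hookrightarrow\FF$ with $D$ effective nonzero and $D(L-D)+\ell(W)=1$, which is excluded by the arguments of \cite[Proposition 2.2]{PP10} and \cite[Lemma 10.4.6]{BL04} using that $\mL$ is not a product polarization --- a hypothesis your proof never invokes, which is itself a warning sign --- or a second section with $0$-dimensional zero locus, which must be ruled out as well; or simply obtain \eqref{eq.coh.F} from the Fourier--Mukai description as the paper does. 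As written, the key equality $h^0(A,\FF)=1$ is not proved.
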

\begin{proof}
Let $\mathcal{L}^*$ be the  $(1, \, 2)$-polarization on
$\widehat{A}$ which is dual to $\mL$. Then $\mathcal{L}^{* \, -1}$
is a IT-sheaf of index $2$ and its Fourier-Mukai transform
$\FF:=\widehat{\mathcal{L}^{* \, -1}}$ is a rank $2$ vector bundle
on $A$, which satisfies \eqref{eq.coh.F} by \cite[Theorem
14.2.2]{BL04} and \cite[Corollary 2.8]{Mu81}. In addition,
\cite[Proposition 14.4.3]{BL04} implies
$c_1(\mathcal{F})=\mathcal{L}$. Finally, Hirzebruch-Riemann-Roch
implies $c_2(\mathcal{F})=1$ and by \cite[Proposition 2.2]{PP10} and
\cite[Proposition 2.4]{PP10}, since $\mL$ is not a product
polarization, we infer that $\FF$ is symmetric and isomorphic to the
unique locally free extension \eqref{eq.F}.
\end{proof}

\begin{proposition} \label{prop.coh.S2.F}
Let $\mathcal{Q} \in \widehat{A}$. The following holds:
\begin{itemize}
\item[$\boldsymbol{(i)}$] if $\mathcal{Q} \notin \emph{im}\;
\phi_2^{\,\times}$, then
\begin{equation*}
h^0(A, \,S^2 \FF \otimes \bigwedge^2 \FF^{\vee} \otimes
\mathcal{Q})=0, \quad h^1(A, \,S^2 \FF \otimes \bigwedge^2
\FF^{\vee} \otimes \mathcal{Q})=0, \quad h^2(A, \,S^2 \FF \otimes
\bigwedge^2 \FF^{\vee} \otimes \mathcal{Q})=0;
\end{equation*}
\item[$\boldsymbol{(ii)}$] if $\mathcal{Q} \in \emph{im}\;
\phi_2^{\,\times}$, then
\begin{equation*}
h^0(A, \,S^2 \FF \otimes \bigwedge^2 \FF^{\vee} \otimes
\mathcal{Q})=1, \quad h^1(A, \,S^2 \FF \otimes \bigwedge^2
\FF^{\vee} \otimes \mathcal{Q})=2, \quad h^2(A, \,S^2 \FF \otimes
\bigwedge^2 \FF^{\vee} \otimes \mathcal{Q})=1.
\end{equation*}
\end{itemize}
\end{proposition}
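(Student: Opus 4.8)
My plan is to bound all three cohomology groups by reducing everything to the already-computed quantity $h^0(A,\,\mL\otimes\mathcal Q\otimes\mathcal I_o^2)$ of Proposition \ref{prop.Q.nodal}, using the Eagon--Northcott sequence together with Serre duality and a Riemann--Roch count. First, since $\FF$ has rank $2$ with $c_1(\FF)=\mL$, I would record $\bigwedge^2\FF^\vee=\det\FF^{-1}=\mL^{-1}$ and the standard rank-$2$ isomorphism $\FF^\vee\cong\FF\otimes\mL^{-1}$; thus the sheaf in question is $\mathcal W:=S^2\FF\otimes\mL^{-1}\otimes\mathcal Q$. Applying the Eagon--Northcott sequence quoted in the introduction to the defining extension \eqref{eq.F} (with sub-bundle $\oo_A$, quotient line bundle $\mL$ and point $p=o$) and twisting by $\mL^{-1}\otimes\mathcal Q$ produces
\begin{equation*}
0\lr\FF\otimes\mL^{-1}\otimes\mathcal Q\lr\mathcal W\lr\mL\otimes\mathcal Q\otimes\mathcal I_o^2\lr 0.
\end{equation*}
I abbreviate the kernel $\mathcal G:=\FF\otimes\mL^{-1}\otimes\mathcal Q=\FF^\vee\otimes\mathcal Q$ and the cokernel $\mathcal R:=\mL\otimes\mathcal Q\otimes\mathcal I_o^2$, and the proposition will drop out of a cohomology chase along this sequence.

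The hard part, and the only nontrivial input, is to show that $\FF$ is $IT$ of index $0$, i.e. $h^1(A,\FF\otimes\mathcal Q)=h^2(A,\FF\otimes\mathcal Q)=0$ for every $\mathcal Q\in\wA$. For $\mathcal Q=\oo_A$ this is \eqref{eq.coh.F}. For $\mathcal Q\neq\oo_A$ I would twist \eqref{eq.F} by $\mathcal Q$; since $H^\bullet(A,\mathcal Q)=0$ this identifies $H^j(A,\FF\otimes\mathcal Q)$ with $H^j(A,\mL\otimes\mathcal Q\otimes\mathcal I_o)$ for all $j$. Writing $\mathcal Q=\phi_{\mL}(x)$ (possible since $\phi_{\mL}$ is an isogeny), one has $\mL\otimes\mathcal Q\cong t_x^*\mL$, hence $\textrm{Bs}\,|\mL\otimes\mathcal Q|=K(\mL)-x$; as $\mathcal Q\neq\oo_A$ we get $x\notin K(\mL)=\ker\phi_{\mL}$, so $o\notin\textrm{Bs}\,|\mL\otimes\mathcal Q|$. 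Thus the evaluation $H^0(A,\mL\otimes\mathcal Q)\to\mathbb C_o$ is surjective, and since $\mL\otimes\mathcal Q$ is $IT_0$ this forces $h^0(A,\mL\otimes\mathcal Q\otimes\mathcal I_o)=1$ and $h^1=h^2=0$. This is the crux: everything rests on the elementary fact that twisting the $(1,2)$-polarisation by a non-trivial degree-zero line bundle translates its base locus off the origin. Dualising and using $\FF^\vee\cong\FF\otimes\mL^{-1}$, it then follows that $\mathcal G$ has cohomology $(h^0,h^1,h^2)=(0,0,1)$ for every $\mathcal Q$.

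With $\mathcal G$ in hand the conclusion is formal. The long exact sequence of the displayed extension gives immediately $H^0(A,\mathcal W)\cong H^0(A,\mathcal R)$, and Proposition \ref{prop.Q.nodal} (applied with base point $o$, i.e. with trivial twist, and $\mathcal Q'=\mathcal Q$) identifies $h^0(A,\mathcal R)=h^0(A,\mL\otimes\mathcal Q\otimes\mathcal I_o^2)$, which is $0$ when $\mathcal Q\notin\textrm{im}\,\phi_2^{\,\times}$ and $1$ when $\mathcal Q\in\textrm{im}\,\phi_2^{\,\times}$; this yields the asserted values of $h^0(A,\mathcal W)$. For $h^2$ I would invoke Serre duality: since $\mathcal W^\vee\cong S^2\FF\otimes\mL^{-1}\otimes\mathcal Q^{-1}$, running the same identity for $\mathcal Q^{-1}$ gives $h^2(A,\mathcal W)=h^0(A,\mL\otimes\mathcal Q^{-1}\otimes\mathcal I_o^2)$, and because $\textrm{im}\,\phi_2^{\,\times}$ consists of $2$-torsion bundles (hence is stable under inversion) this equals $h^0(A,\mathcal W)$. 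Finally $\chi(\mathcal W)=\chi(\mathcal G)+\chi(\mathcal R)=1+(-1)=0$, so $h^1(A,\mathcal W)=h^0(A,\mathcal W)+h^2(A,\mathcal W)=2\,h^0(A,\mathcal W)$. This gives $(0,0,0)$ in case $(i)$ and $(1,2,1)$ in case $(ii)$, as claimed.
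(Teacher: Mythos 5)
Your proof is correct, but it takes a genuinely different route from the paper's. The paper starts from the section of $\FF\otimes\mQ$ and invokes Friedman's structure theorem to produce an extension $0\lr\mathcal{C}\lr\FF\otimes\mQ\lr\mL\otimes\mQ^2\otimes\mathcal{C}^{-1}\otimes\mathcal{I}_W\lr 0$, then must exclude the cases where $\mathcal{C}$ is an elliptic curve or a principal polarization (this is where the non-product hypothesis enters directly), arriving at the sequence \eqref{eq.F.Q} whose cokernel involves a \emph{moving} point $p\in\textrm{Bs}\,|L+2Q|$; Eagon--Northcott applied to \eqref{eq.F.Q} then reduces $h^0$ to Proposition \ref{prop.Q.nodal} at $p$, with kernel $\FF^{\vee}$ whose cohomology is known from \eqref{eq.coh.F}. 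You instead apply Eagon--Northcott to the \emph{fixed} extension \eqref{eq.F} and twist by $\mL^{-1}\otimes\mQ$, so your cokernel sits at the fixed point $o$ and Proposition \ref{prop.Q.nodal} applies there with trivial twist; the price is that your kernel is $\FF^{\vee}\otimes\mQ$ rather than $\FF^{\vee}$, which forces you to prove that $\FF$ is IT of index $0$. Your translation argument for this ($\mL\otimes\mQ\cong t_x^*\mL$, so the base locus $K(\mL)-x$ misses $o$ exactly when $\mQ\neq\oo_A$) is elementary and correct, given the paper's normalization $\textrm{Bs}\,|L|=K(\mL)\ni o$. What your approach buys: it bypasses the Friedman decomposition and its case analysis entirely, and it gives an elementary proof of the IT$_0$ property of $\FF$, a fact the paper itself needs later (in the proof of Proposition \ref{prop.coh.S3.F}, where it is asserted via Fourier--Mukai theory). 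What the paper's approach buys: it stays closer to the bundle-theoretic technique of \cite{PP10} and records the geometric fact that the section of $\FF\otimes\mQ$ vanishes at a base point of $|L+2Q|$. The endgame (Serre duality using stability of $\textrm{im}\,\phi_2^{\,\times}$ under inversion, plus $\chi=0$ to get $h^1=2h^0$) is the same in both arguments.
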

\begin{proof}
Tensoring \eqref{eq.F} with $\mathcal{Q}$ we obtain $h^0(A, \, \FF
\otimes \mathcal{Q})=1$, that is $\FF \otimes \mQ$ has a non-trivial
section. By \cite[Proposition 5 p. 33]{F98} there exists an
effective divisor $C$ and a zero-dimensional subscheme $W \subset A$
such that $\FF \otimes \mQ$ fits into a short exact sequence
\begin{equation} \label{eq.F.Q-2}
0 \lr \mathcal{C} \lr \FF \otimes \mQ \lr \mL \otimes \mQ^2
 \otimes \mathcal{C}^{-1} \otimes \mathcal{I}_W \lr 0,
\end{equation}
where $\mathcal{C}=\oo_A(C)$. Then $h^0(A, \, \mathcal{C})=1$ and
\begin{equation} \label{eq.c2}
1=c_2(\FF \otimes \mQ)=C(L-C) + \ell(W).
\end{equation}
Now there are three possibilities:
\begin{itemize}
\item[$(i)$] $C$ is an elliptic curve; \item[$(ii)$] $C$ is a
principal polarization; \item[$(iii)$] $C=0$.
\end{itemize}
We want to show that $(i)$ and $(ii)$ cannot occur.

In case $(i)$ we have $C^2=0$, then by \eqref{eq.c2} we obtain
$CL=1$ and $\ell(W)=0$. Thus \cite[Lemma 10.4.6]{BL04} implies that
$\mL$ is a product polarization, contradiction.

In case $(ii)$, the Index Theorem yields $(CL)^2 \geq C^2 L^2 =8$,
so using \eqref{eq.c2} we deduce $CL=3$, $\ell(W)=0$. Tensoring
\eqref{eq.F.Q-2} by $\mQ^{-1}$ and setting
$\mathcal{C'}:=\mathcal{C} \otimes \mQ^{-1}$, we obtain
\begin{equation}
0 \lr \mathcal{C}' \lr \FF \lr \mL \otimes
  \mathcal{C}'^{-1}\lr 0.
\end{equation}
Since $\mathcal{C}'$ is also principal polarization, by applying the
same argument used in \cite[proof of Proposition 2.2]{PP10} we
conclude again that $\mathcal{L}$ must be a product polarization.

Therefore the only possibility is $(iii)$, namely $C=0$. It follows
$\ell(W)=1$, that is $W$ consists of a unique point $p \in A$ and
\eqref{eq.F.Q-2} becomes
\begin{equation} \label{eq.F.Q}
0 \lr \oo_A \lr \FF \otimes \mQ \lr \mL \otimes \mQ^2 \otimes
\mathcal{I}_p \lr 0.
\end{equation}
Moreover, since $\FF \otimes \mQ$ is locally free, we have $p \in
\textrm{Bs}\,|L+2Q|$, see \cite[Example 1.7]{Ca90} or
\cite[Theorem 12 p. 39]{F98} . Applying the Eagon-Northcott
complex to \eqref{eq.F.Q} and tensoring with $\bigwedge^2
\FF^{\vee} \otimes \mQ^{-1}$ we get
\begin{equation} \label{eq.S2F.Q}
0 \lr \FF^{\vee} \lr S^2 \FF \otimes \bigwedge^2 \FF^{\vee}
\otimes \mQ \lr \mL \otimes \mQ^3 \otimes \mathcal{I}_p^2 \lr 0,
\end{equation}
 hence
\begin{equation*}
 h^0(A, \,S^2 \FF \otimes \bigwedge^2 \FF^{\vee} \otimes
\mQ)=h^0(A, \,\mL \otimes \mQ^3 \otimes \mathcal{I}_p^2).
\end{equation*}
On the other hand, since $p \in \textrm{Bs}\,|L+2Q|$, Proposition
\ref{prop.Q.nodal} yields
\begin{equation} \label{eq.dimension}
h^0(A, \, \mL \otimes \mathcal{Q}^3 \otimes
\mathcal{I}_p^2)=\left\{
  \begin{array}{ll}
    0 & \hbox{if } \mQ \notin \textrm{im}\,\phi^{\times}_2 \\
    1 & \hbox{if } \mQ \in \textrm{im}\,\phi^{\times}_2.
  \end{array}
\right.
\end{equation}
Using Serre duality, the isomorphism $\FF^{\vee} \cong \FF \otimes
\bigwedge^2 \FF^{\vee}$ and \eqref{eq.dimension}, since $\mQ \in
\textrm{im}\, \phi_2^{\times}$ if and only if $\mQ^{-1} \in
\textrm{im}\, \phi_2^{\times}$ we obtain
\begin{equation*}
h^2(A, \, S^2\FF \otimes \bigwedge^2 \FF^{\vee} \otimes
\mQ)=h^0(A, \, S^2\FF \otimes \bigwedge^2 \FF^{\vee} \otimes
\mQ^{-1})=h^0(A, \, S^2\FF \otimes \bigwedge^2 \FF^{\vee} \otimes
\mQ)
\end{equation*}
for all $\mathcal{Q} \in \widehat{A}$. Moreover
Hirzebruch-Riemann-Roch gives $\chi(A, \, S^2\FF \otimes \bigwedge^2
\FF^{\vee} \otimes \mathcal{Q})=0$, hence we get
\begin{equation*}
 h^1(A, \, S^2\FF \otimes \bigwedge^2 \FF^{\vee} \otimes
\mQ)=2 \cdot h^0(A, \, S^2\FF \otimes \bigwedge^2 \FF^{\vee}
\otimes \mQ).
\end{equation*}
This completes the proof.
\end{proof}

\begin{proposition} \label{prop.coh.S3.F}
For any $\mathcal{Q} \in \widehat{A}$, we have
\begin{equation*}
h^0(A, \,S^3 \FF \otimes \bigwedge^2 \FF^{\vee} \otimes
\mathcal{Q})=2, \quad h^1(A, \,S^3 \FF \otimes \bigwedge^2
\FF^{\vee} \otimes \mathcal{Q})=0, \quad h^2(A, \,S^3 \FF \otimes
\bigwedge^2 \FF^{\vee} \otimes \mathcal{Q})=0.
\end{equation*}
\end{proposition}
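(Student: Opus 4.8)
The plan is to follow the same strategy that worked for Proposition \ref{prop.coh.S2.F}, namely to reduce the cohomology of $S^3 \FF \otimes \bigwedge^2 \FF^{\vee} \otimes \mQ$ to a more tractable line-bundle computation via the Eagon-Northcott complex, and then to pin down the three cohomology dimensions using Serre duality together with Hirzebruch-Riemann-Roch. First I would start from the defining extension \eqref{eq.F}, tensor it with $\mQ$ to obtain \eqref{eq.F.Q} (with the point $p \in \textrm{Bs}\,|L+2Q|$), and then apply the \emph{second} Eagon-Northcott sequence recorded in the introduction, namely
\begin{equation*}
0 \lr S^2(\FF\otimes\mQ) \otimes \oo_A \lr S^3(\FF\otimes \mQ) \lr (\mL\otimes\mQ^2)^3 \otimes \mathcal{I}^3_p \lr 0.
\end{equation*}
After untwisting by $\bigwedge^2 \FF^{\vee} \otimes \mQ^{-3}$ this should produce a short exact sequence relating $S^3 \FF \otimes \bigwedge^2 \FF^{\vee} \otimes \mQ$ to $S^2 \FF \otimes \bigwedge^2 \FF^{\vee}$ on the left and to $\mL \otimes \mQ^3 \otimes \mathcal{I}^3_p$ on the right. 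The idea is that Proposition \ref{prop.coh.S2.F} already controls the left-hand term completely, so the work collapses to understanding the cohomology of the line bundle $\mL \otimes \mQ^3 \otimes \mathcal{I}^3_p$ on $A$.

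Next I would compute the cohomology of that twisted ideal-sheaf term directly. Since $\deg(\mL\otimes\mQ^3)$ still gives $(L^2)=4$ and we are imposing a \emph{triple} point at a single $p$, the expected number of conditions is $\binom{3+1}{2}=6$, which already exceeds $h^0(A,\mL\otimes\mQ^3)=2$; so one expects $h^0(A, \mL \otimes \mQ^3 \otimes \mathcal{I}^3_p)=0$ and, by a Riemann-Roch count on the blow-up, vanishing of the higher cohomology as well (a curve in $|L|$ has arithmetic genus $3$ and cannot carry a point of multiplicity $3$ while remaining effective of this class). I would make this precise by blowing up $p$ and applying the argument already invoked in Proposition \ref{prop.2L-1}, or more simply by noting that an effective member would be a curve numerically equivalent to $L$ with a point of multiplicity $3$, which is incompatible with Proposition \ref{prop.barth.class}. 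Feeding $h^i(A,\mL\otimes\mQ^3\otimes\mathcal{I}^3_p)=0$ for all $i$ into the long exact sequence identifies the cohomology of $S^3 \FF \otimes \bigwedge^2 \FF^{\vee} \otimes \mQ$ with that of $S^2 \FF \otimes \bigwedge^2 \FF^{\vee}$ (untwisted, i.e.\ the case $\mQ=\oo_A \notin \textrm{im}\,\phi_2^\times$ of the previous proposition), giving the claimed $h^0=2$, $h^1=0$, $h^2=0$ by a dimension shift.

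There is, however, a delicate bookkeeping issue: the untwisting I sketched above does not obviously leave the left term in the exact sequence as precisely $S^2 \FF \otimes \bigwedge^2 \FF^{\vee} \otimes \mathcal{O}_A$, since $S^2$ of the twisted bundle carries extra factors of $\mQ$. The cleaner route, which I expect to adopt, is to apply the Eagon-Northcott sequence directly to the untwisted extension \eqref{eq.F} rather than to \eqref{eq.F.Q}, producing
\begin{equation*}
0 \lr S^2 \FF \otimes \bigwedge^2\FF^{\vee} \lr S^3 \FF \otimes \bigwedge^2 \FF^{\vee} \lr \mL^3 \otimes \mathcal{I}^3_o \lr 0,
\end{equation*}
then tensor this whole sequence by $\mQ$. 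Here the right-hand term becomes $\mL^3\otimes\mQ\otimes\mathcal{I}^3_o$, whose cohomology I would again kill by the multiplicity count ($\mL^3$ has self-intersection $36$, so a triple point at $o$ costs only $6$ and the higher cohomology vanishes by ampleness, while the vanishing of $h^0$ follows from the fact that such curves split off the pencil members as in Proposition \ref{prop.2L-2}). The \textbf{main obstacle} is precisely verifying these line-bundle cohomology vanishings cleanly and uniformly in $\mQ$ (so that the answer is independent of whether $\mQ\in\textrm{im}\,\phi_2^\times$), and confirming that the connecting maps in the long exact sequence behave as the naive count predicts; once that is settled, the stated values follow formally from Proposition \ref{prop.coh.S2.F}(i) and a Hirzebruch-Riemann-Roch check that $\chi=2$.
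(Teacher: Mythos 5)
Your proposed reduction cannot work, and the problem is not just bookkeeping. First, both Eagon--Northcott sequences you display are mis-twisted: since $\bigwedge^2 \FF^{\vee} \cong \mL^{-1}$, tensoring the sequence
\begin{equation*}
0 \lr S^2 \FF \lr S^3 \FF \lr \mL^3 \otimes \mathcal{I}_o^3 \lr 0
\end{equation*}
(coming from \eqref{eq.F}) by $\bigwedge^2 \FF^{\vee} \otimes \mQ$ leaves $\mL^2 \otimes \mQ \otimes \mathcal{I}_o^3$ on the right, not $\mL^3 \otimes \mQ \otimes \mathcal{I}_o^3$; this corrected sequence is exactly \eqref{eq.S3F} of the paper. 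Likewise your first variant produces a right-hand term with $\mL^2$, not $\mL \otimes \mQ^3 \otimes \mathcal{I}_p^3$ (you have conflated it with \eqref{eq.S2F.Q}, which comes from the $S^2$ sequence and carries $\mathcal{I}_p^2$). Second, and decisively, the vanishing you need for the right-hand term is false in every version, and is in fact incompatible with Riemann--Roch. For the correct term, $\chi(A, \mL^2 \otimes \mQ \otimes \mathcal{I}_o^3)=8-6=2$ and $h^0 \geq 2$, since a triple point imposes at most six conditions on the eight-dimensional space $H^0(A, \mL^2 \otimes \mQ)$; its precise value is $3$ or $2$ according to whether $\mQ \in \textrm{im}\,\phi_2^{\times}$ or not, and this is precisely what the paper computes in Proposition \ref{prop.restriction} \emph{using} Propositions \ref{prop.coh.S2.F} and \ref{prop.coh.S3.F} --- so extracting Proposition \ref{prop.coh.S3.F} from that computation would be circular. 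Your own mis-twisted candidates fare no better: for $\mL \otimes \mQ^3 \otimes \mathcal{I}_p^3$ one has $\chi = 2-6=-4$, so the (true) vanishing $h^0=0$ forces $h^1 \geq 4$ rather than $h^1=0$; and for $\mL^3 \otimes \mQ \otimes \mathcal{I}_o^3$ one has $\chi = 18-6=12$, so $h^0 \geq 12$ and no splitting-off argument can kill it. In each case your claimed simultaneous vanishing of $h^0, h^1, h^2$ would give $\chi=0$, contradicting these Euler characteristics.

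The paper's proof avoids ideal sheaves entirely. By Hirzebruch--Riemann--Roch, $\chi(A, S^3\FF \otimes \bigwedge^2\FF^{\vee} \otimes \mQ)=2$, so it suffices to prove $h^1=h^2=0$. This is done by Fourier--Mukai theory: $\FF \otimes \mQ$ is the Fourier--Mukai transform of a $(1,2)$-polarization $\mLd$ on $\wA$, so pulling back along the isogeny $\phi = \phi_{\mLd^{-1}} \colon \wA \to A$ gives $\phi^*(\FF \otimes \mQ) \cong \mLd \oplus \mLd$; since $\phi$ is finite, $H^i(A, S^3\FF \otimes \bigwedge^2\FF^{\vee}\otimes \mQ)$ injects into $H^i\bigl(\wA, \phi^*(S^3\FF \otimes \bigwedge^2\FF^{\vee}\otimes \mQ)\bigr) \cong H^i(\wA, \mLd)^{\oplus 4}$, which vanishes for $i=1,2$ because $\mLd$ is ample. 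If you wanted to salvage an Eagon--Northcott argument, you would need an independent proof that a triple point at $o$ imposes independent conditions on $|2L+Q|$ exactly when $\mQ \notin \textrm{im}\,\phi_2^{\times}$, and fails to do so by exactly one when $\mQ \in \textrm{im}\,\phi_2^{\times}$; that dichotomy is essentially the content of the proposition, not an available input.
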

\begin{proof}
By Hirzebruch-Riemann-Roch we obtain $\chi(A, \, S^3 \FF \otimes
\bigwedge^2 \FF^{\vee} \otimes \mathcal{Q})=2$, so it suffices to
show that $h^i(A, \, S^3 \FF \otimes \bigwedge^2 \FF^{\vee}
\otimes \mathcal{Q})=0$ for $i=1, \, 2$. The sheaf $\FF \otimes
\mQ^{-1}$ satisfies IT of index $0$ and $h^0(A, \, \FF \otimes
\mQ^{-1})=1$, so its Fourier-Mukai transform
$\mathcal{L}^{-1}_{\delta}:=\widehat{\FF \otimes \mQ^{-1}}$ is a line
bundle on $\wA$, which satisfies IT of index $2$ by \cite[Theorem 14.2.2]{BL04} and has $h^2(\wA, \, \mLd^{-1})=2$ by \cite[Corollary 2.8]{Mu81}. This means that
$\mL_{\delta}=(\mLd^{-1})^{-1}$ is a $(1, \, 2)$ polarization. Since $\mathcal{F}$
is a symmetric vector bundle, by using \cite[Corollary 2.4]{Mu81}
we obtain
\begin{equation*}
\widehat{\mLd}=(-1)_A^*(\FF \otimes \mQ^{-1})=\FF \otimes \mQ,
\end{equation*}
that is the rank $2$ vector bundle $\FF \otimes \mQ$ is the
Fourier-Mukai transform of $\mLd$. Therefore, taking the isogeny
\begin{equation*}
\phi = \phi_{\mLd^{-1}} \colon \wA \lr A
\end{equation*}
and using \cite[Proposition 3.11]{Mu81}, we can write
\begin{equation} \label{eq.mukai.Ld}
\phi^*(\FF \otimes \mQ)=\mLd \oplus \mLd.
\end{equation}
On the other hand, $\phi$ is a finite map so we have
\begin{equation*} H^i(A, \, S^3 \FF \otimes \bigwedge^2
\FF^{\vee} \otimes \mathcal{Q}) \cong \phi^*H^i(A, \, S^3 \FF
\otimes \bigwedge^2 \FF^{\vee} \otimes \mathcal{Q}) \subseteq
H^i(\wA, \, \phi^*(S^3 \FF \otimes \bigwedge^2 \FF^{\vee} \otimes
\mathcal{Q}) ).
\end{equation*}
Since
\begin{equation*}
S^3 \FF \otimes \bigwedge^2 \FF^{\vee} \otimes \mQ=S^3( \FF
\otimes \mathcal{Q}) \otimes \bigwedge^2(\FF \otimes \mQ)^{\vee},
\end{equation*}
by using \eqref{eq.mukai.Ld} we deduce
\begin{equation*}
H^i(\wA, \, \phi^*(S^3 \FF \otimes \bigwedge^2 \FF^{\vee} \otimes
\mathcal{Q}) )=H^i(\wA, \, \mLd)^{\oplus4}.
\end{equation*}
The right-hand side vanishes for $i=1, \,2$, so we are done.
\end{proof}
Let $\sigma \colon B \to A$ be the blow-up of $A$ at $o$ and let $E
\subset B$ be the exceptional divisor. Since $\textrm{Pic}^0(B)
\cong \sigma ^{*}\textrm{Pic}^0(A)$, by abusing notation we will
often identify degree $0$ line bundles on $B$ with degree $0$ line
bundles on $A$, and we will simply write $\mathcal{Q}$ instead of
$\sigma^* \mQ$.

The strict transform of the pencil $|\mL^2 \otimes
\mathcal{I}_o^4|$ gives the base-point free pencil
$|\sigma^*(2L)-4E|$ in $B$, whose general element is a smooth
curve of genus $3$.

\begin{proposition} \label{prop.restriction}
Let $D \in |\sigma^*(2L) - 4E|$ be a smooth curve and let
$\mathcal{Q} \in \widehat{A}$. Then $\oo_D(Q)=\oo_D$ if and only
if $\mQ \in \emph{im}\, \phi_2$.
\end{proposition}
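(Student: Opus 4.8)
The plan is to recast the condition $\oo_D(Q)=\oo_D$ as the (non)vanishing of a cohomology group on $A$, and then to read off the answer from the bundle computations of Propositions \ref{prop.coh.S2.F} and \ref{prop.coh.S3.F}. Since $\sigma^*\mQ$ has degree $0$ on the smooth curve $D$, the restriction $\oo_D(Q)$ is trivial if and only if $h^0(D,\,\oo_D(Q))>0$; so it suffices to compute this number and to check that it equals $1$ precisely when $\mQ\in\textrm{im}\,\phi_2$. The case $\mQ=\oo_A$ is immediate, hence I would assume $\mQ$ nontrivial from now on.

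First I would restrict to $D$. From the sequence $0\lr \sigma^*\mQ\otimes\oo_B(-D)\lr \sigma^*\mQ\lr \oo_D(Q)\lr 0$, together with $\sigma_*\oo_B=\oo_A$ and $R^{>0}\sigma_*\oo_B=0$, one gets $H^i(B,\,\sigma^*\mQ)=H^i(A,\,\mQ)=0$ for $i=0,1$, because $\mQ$ is a nontrivial degree-$0$ line bundle on the abelian surface $A$ (where $\chi(\mQ)=0$, $h^0=h^2=0$). Hence the long exact sequence yields $H^0(D,\,\oo_D(Q))\cong H^1(B,\,\sigma^*\mQ\otimes\oo_B(-D))$. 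Next I would move this back down to $A$: since $K_B=\oo_B(E)$ and $\oo_B(D)=\sigma^*\mL^2\otimes\oo_B(-4E)$, Serre duality on $B$ gives $H^1(B,\,\sigma^*\mQ\otimes\oo_B(-D))\cong H^1(B,\,\sigma^*(\mL^2\otimes\mQ^{-1})\otimes\oo_B(-3E))$; and because $R^1\sigma_*\oo_B(-3E)=0$ with $\sigma_*\oo_B(-3E)=\mathcal{I}_o^3$, the Leray spectral sequence collapses to give an isomorphism with $H^1(A,\,\mL^2\otimes\mQ^{-1}\otimes\mathcal{I}_o^3)$. Thus everything is reduced to computing the single number $h^1(A,\,\mL^2\otimes\mQ^{-1}\otimes\mathcal{I}_o^3)$, i.e.\ the failure of the triple-point conditions at $o$ to be independent on $|\mL^2\otimes\mQ^{-1}|$.

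The key step is to feed this into the Eagon--Northcott resolution of the extension \eqref{eq.F}. Since there $\FF$ sits in $0\lr\oo_A\lr\FF\lr\mL\otimes\mathcal{I}_o\lr 0$, the second Eagon--Northcott sequence recalled in the introduction reads $0\lr S^2\FF\lr S^3\FF\lr\mL^3\otimes\mathcal{I}_o^3\lr 0$. Tensoring by $\bigwedge^2\FF^\vee\otimes\mQ^{-1}=\mL^{-1}\otimes\mQ^{-1}$ (recall $\det\FF=\mL$) produces
\[
0\lr S^2\FF\otimes\textstyle\bigwedge^2\FF^\vee\otimes\mQ^{-1}\lr S^3\FF\otimes\bigwedge^2\FF^\vee\otimes\mQ^{-1}\lr \mL^2\otimes\mQ^{-1}\otimes\mathcal{I}_o^3\lr 0.
\]
By Proposition \ref{prop.coh.S3.F} (applied to $\mQ^{-1}$) the middle term has $h^1=h^2=0$, so the tail of the long exact sequence degenerates to an isomorphism $H^1(A,\,\mL^2\otimes\mQ^{-1}\otimes\mathcal{I}_o^3)\cong H^2(A,\,S^2\FF\otimes\bigwedge^2\FF^\vee\otimes\mQ^{-1})$. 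Finally Proposition \ref{prop.coh.S2.F}, applied to $\mQ^{-1}$ and using that $\mQ^{-1}\in\textrm{im}\,\phi_2^{\,\times}$ if and only if $\mQ\in\textrm{im}\,\phi_2^{\,\times}$, gives that this $h^2$ is $0$ when $\mQ\notin\textrm{im}\,\phi_2^{\,\times}$ and $1$ when $\mQ\in\textrm{im}\,\phi_2^{\,\times}$. Chaining the isomorphisms, $h^0(D,\,\oo_D(Q))$ equals $0$ or $1$ in these two cases, which together with the trivial case $\mQ=\oo_A$ is exactly the assertion.

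I expect the main obstacle to be the bookkeeping of twists needed so that Propositions \ref{prop.coh.S2.F}--\ref{prop.coh.S3.F} apply verbatim: in particular the observation $\bigwedge^2\FF^\vee=\mL^{-1}$, which is what converts the $\mL^3$ appearing in the Eagon--Northcott sequence into the required $\mL^2\otimes\mQ^{-1}$, together with verifying the two blow-up facts ($R^{>0}\sigma_*$ vanishing and the Serre-duality twist $K_B=\oo_B(E)$). As an independent check of the ``if'' direction, one may argue geometrically: the three curves $2\widetilde{N}_i$ are members of the base-point-free pencil $|\sigma^*(2L)-4E|$, hence are disjoint from the general fibre $D$, so that $\sigma^*\mQ_k=\oo_B(\widetilde{N}_i-\widetilde{N}_j)$ restricts to $\oo_D$ on $D$ for each permutation $\{i,j,k\}$, confirming $\textrm{im}\,\phi_2\subseteq\ker(\sigma|_D)^*$.
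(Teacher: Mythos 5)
Your proof is correct and takes essentially the same route as the paper: the same restriction sequence on $B$, Serre duality on $B$, pushdown to $A$, and the Eagon--Northcott sequence \eqref{eq.S3F} combined with Propositions \ref{prop.coh.S2.F} and \ref{prop.coh.S3.F}. The only difference is Serre-dual bookkeeping: you compute $h^0(D, \, \oo_D(Q)) = h^1(A, \, \mL^2\otimes\mQ^{-1}\otimes\mathcal{I}_o^3)$ directly (using the $h^1$, $h^2$ parts of those propositions and the vanishing $R^1\sigma_*\oo_B(-3E)=0$), whereas the paper computes $h^1(D, \, \oo_D(Q)) = h^0(A, \, \mL^2\otimes\mQ^{-1}\otimes\mathcal{I}_o^3)$ and then converts via Riemann--Roch on the genus-$3$ curve $D$.
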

\begin{proof}
If $\mQ=\oo_A$ the result is clear, so we assume that $\mQ \in
\widehat{A}$ is non-trivial. Since $h^1(B, \, \mQ)=h^2(B, \,
\mQ)=0$, by using the short exact sequence
\begin{equation*}
0 \lr \oo_B(Q - D) \lr \oo_B(Q) \lr \oo_D(Q) \lr 0
\end{equation*}
and Serre duality, we obtain
\begin{equation} \label{eq.Q.C}
\begin{split}
h^1(D, \, \oo_D(Q))&=h^2(B, \, \oo_B(Q-D))=h^0(B, \, \oo_B(D-Q+E))
\\
&=h^0(B, \, \sigma^* \oo_A(2L-Q) -3E)=h^0(A, \, \mL^2 \otimes
\mathcal{Q}^{-1} \otimes \mathcal{I}_o^3).
\end{split}
\end{equation}
In order to compute the last cohomology group, we will exploit the
vector bundle $\mathcal{F}$. In fact, applying the Eagon-Northcott
complex to \eqref{eq.F} and tensoring with
$\bigwedge^2\FF^{\vee}\otimes \mQ^{-1}$, we get
\begin{equation} \label{eq.S3F}
0 \lr S^2 \FF \otimes \bigwedge^2 \FF^{\vee} \otimes \mQ^{-1} \lr
S^3 \FF \otimes \bigwedge^2 \FF^{\vee} \otimes \mQ^{-1} \lr \mL^2
\otimes \mathcal{Q}^{-1} \otimes \mathcal{I}_o^3 \lr 0.
\end{equation}
By using \eqref{eq.S3F}, Proposition \ref{prop.coh.S2.F} and
Proposition \ref{prop.coh.S3.F}, we obtain
\begin{equation*}
h^0(A, \, \mL^2 \otimes \mathcal{Q}^{-1} \otimes
\mathcal{I}_o^3)=\left\{
  \begin{array}{ll}
    2 & \hbox{if } \mQ \notin \textrm{im}\,\phi^{\times}_2 \\
    3 & \hbox{if } \mQ \in \textrm{im}\,\phi^{\times}_2.
  \end{array}
\right.
\end{equation*}

Since $D$ is a smooth curve of genus $3$, by using \eqref{eq.Q.C}
and Riemann-Roch we deduce
\begin{equation*}
h^0(D, \, \oo_D(Q))=h^1(D, \, \oo_D(Q))-2 = \left\{
  \begin{array}{ll}
    0 & \hbox{if } \mQ \notin \textrm{im}\,\phi_2 \\
    1 & \hbox{if } \mQ \in \textrm{im}\,\phi_2.
  \end{array}
\right.
\end{equation*}
This completes the proof.
\end{proof}

\begin{corollary} \label{cor.restriction}
Let $\mQ \in \widehat{A}$ and let $D$ be a smooth curve in the
pencil $|\sigma^*(2L) - 4E|$. Then $\oo_D(\sigma^*(L+Q)-2E)=\oo_D$
if and only if $\mathcal{Q} \in \emph{im}\, \phi_2$.
\end{corollary}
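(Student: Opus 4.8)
The plan is to reduce everything to Proposition \ref{prop.restriction} by isolating the ``constant'' twist $\sigma^*L - 2E$. Writing the divisor in question as
\[
\sigma^*(L+Q) - 2E = (\sigma^*L - 2E) + \sigma^* Q
\]
and restricting to $D$, I get $\oo_D(\sigma^*(L+Q)-2E) = \oo_D(\sigma^*L - 2E) \otimes \oo_D(\sigma^* Q)$. Since Proposition \ref{prop.restriction} already tells me that $\oo_D(\sigma^* Q)=\oo_D$ precisely when $\mQ \in \textrm{im}\,\phi_2$, the whole corollary collapses to the single, $\mQ$-independent assertion $\oo_D(\sigma^*L - 2E) = \oo_D$.

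This identity is the main obstacle, and the subtlety is that it cannot be proved by exhibiting a section: the system $|\sigma^*L - 2E|$ on $B$ is empty, since no curve of $|L|$ is singular at $o$. Restricting to $D$ only shows a priori that $\oo_D(\sigma^*L - 2E)$ is $2$-torsion, because $2(\sigma^*L - 2E) \sim \sigma^*(2L)-4E \sim D$ while $\oo_D(D)=\oo_D$ (the pencil $|\sigma^*(2L)-4E|$ is base-point free with $D^2=0$). Being $2$-torsion of the right numerical type is not enough; one must rule out the remaining non-trivial $2$-torsion classes on $D$.

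To pin it down I would import an honest effective divisor lying in the right class up to an $\textrm{im}\,\phi_2$-twist, namely the irreducible curve $N_i \in |L+Q_i|$ with an ordinary double point at $o$. Its strict transform $\widetilde N_i$ lies in $|\sigma^*(L+Q_i) - 2E|$, and using $L^2=4$, $E^2=-1$ one finds $\widetilde N_i \cdot D = 2L^2 + 8E^2 = 0$. Since no curve of $|L|$ is singular at $o$, the class $\sigma^*L - 2E$ is not effective, so $\widetilde N_i$ cannot occur as a component of $D$; as both are effective and meet in degree $0$, this forces $\widetilde N_i \cap D = \varnothing$ and hence $\oo_D(\sigma^*(L+Q_i)-2E)=\oo_D$. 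Finally I would peel off the twist: because $\mQ_i \in \textrm{im}\,\phi_2$, Proposition \ref{prop.restriction} gives $\oo_D(\sigma^* Q_i)=\oo_D$, so that
\[
\oo_D(\sigma^*L - 2E) = \oo_D(\sigma^*(L+Q_i)-2E) \otimes \oo_D(\sigma^* Q_i)^{-1} = \oo_D.
\]
Substituting this back into the first display and invoking Proposition \ref{prop.restriction} once more yields $\oo_D(\sigma^*(L+Q)-2E)=\oo_D(\sigma^*Q)=\oo_D$ exactly when $\mQ \in \textrm{im}\,\phi_2$, which is the claim.
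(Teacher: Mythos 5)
Your overall strategy coincides with the paper's: reduce everything to the single identity $\oo_D(\sigma^*L-2E)=\oo_D$, obtain that identity from the disjointness of $\widetilde{N}_i$ and $D$ together with Proposition \ref{prop.restriction}, and then conclude by applying Proposition \ref{prop.restriction} once more. The reduction, the computation $\widetilde{N}_i\cdot D=0$, and the final peeling-off of the torsion twist are all correct. However, the justification you give for the key claim that $\widetilde{N}_i$ cannot occur as a component of $D$ does not work. If $\widetilde{N}_i\leq D$, the residual divisor $D-\widetilde{N}_i$ is effective and linearly equivalent to $\sigma^*(L-Q_i)-2E$, \emph{not} to $\sigma^*L-2E$: the twist by $\mathcal{Q}_i$ does not disappear, because $\mathcal{Q}_i$ is only numerically trivial. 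Since $\mathcal{Q}_i^{-1}\cong\mathcal{Q}_i$, the relevant linear system is $|\sigma^*(L+Q_i)-2E|$, which is certainly non-empty --- it contains $\widetilde{N}_i$ itself. So the emptiness of $|\sigma^*L-2E|$ rules out nothing here; and if you instead meant numerical classes, the assertion that ``the class $\sigma^*L-2E$ is not effective'' is false, since $\widetilde{N}_i$ is an effective divisor in precisely that numerical class. As written, this step fails.

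The gap is easy to close, in either of two ways. First, you can finish your own argument: since $h^0(A,\mathcal{L}\otimes\mathcal{Q}_i\otimes\mathcal{I}_o^2)=1$, the system $|\sigma^*(L+Q_i)-2E|$ consists of the single element $\widetilde{N}_i$, so $\widetilde{N}_i\leq D$ would force $D=2\widetilde{N}_i$, contradicting the smoothness (hence reducedness) of $D$. Second, and this is the paper's implicit route, one can avoid the intersection-number argument altogether: since $\mathcal{Q}_i^2=\oo_A$, the divisor $2\widetilde{N}_i$ is one of the non-reduced members of the pencil $|\sigma^*(2L)-4E|$ described in Proposition \ref{prop.2L-1}; two distinct members of a base-point-free pencil are disjoint, and $D\neq 2\widetilde{N}_i$ because $D$ is reduced, so $\widetilde{N}_i\cap D=\varnothing$ immediately.
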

\begin{proof}
For all $i \in \{1, \, 2, \, 3 \}$ the effective curve
$\widetilde{N}_i \in |\sigma^*(L+Q_i)-2E|$ does not intersect $D$,
so $\oo_D(\sigma^*(L+Q_i)-2E)=\oo_D$. Hence
 Proposition \ref{prop.restriction} yields
 $\oo_D(\sigma^*L-2E)=\oo_D$ and the claim follows.
\end{proof}

\section{Surfaces with $p_g=q=2$, $K^2=6$ and Albanese map of degree
$2$} \label{sec.d=2}

In the sequel, $S$ will be a smooth minimal surface of Albanese
general type with $p_g=q=2$ and $\alpha \colon S \to A$ will be its
Albanese map, which we suppose of degree $2$. Let $D_A \subset A$ be the branch locus of $\alpha$
and let
\begin{equation*} \label{dia.alpha}
\xymatrix{
S \ar[r] \ar[dr]_{\alpha} & X \ar[d]^{f} \\
 & A}
\end{equation*}
be the Stein factorization of $\alpha$. Then $f \colon X \to A$ is
a finite double cover and, since $S$ is smooth, it follows that
$X$ is normal, see \cite[Chapter I, Theorem 8.2]{BHPV03}. In
particular $X$ has at most isolated singularities, hence the curve
$D_A$ is reduced.

\begin{proposition} \label{prop.branch}
Assume that $K_S^2=6$ and that the Albanese map $\alpha \colon S
\to A$ is a generically finite double cover. Then there exists a
polarization $\mathcal{L}_A =\oo_A(L_A)$ of type $(1, \, 2)$ on
$A$ such that $D_A$ is a curve in $|2L_A|$ whose unique
non-negligible singularity is an ordinary quadruple point $p$.
\end{proposition}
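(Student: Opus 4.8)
The plan is to extract the polarization $\mathcal{L}_A$ from the double cover structure and then pin down the branch locus using the numerical constraint $K_S^2=6$. Recall that a finite double cover $f\colon X \to A$ is determined by a line bundle $\mathcal{M}$ on $A$ together with an effective divisor $D_A \in |2\mathcal{M}|$, via $f_* \oo_X = \oo_A \oplus \mathcal{M}^{-1}$. Since $S \to X$ is the minimal desingularization and $S$ is a minimal surface of general type, the first thing I would do is write down the standard double-cover formulas for the invariants of the smooth model in terms of $\mathcal{M}$ and the singularities of $D_A$. For the canonical bundle one has $K_X = f^*(K_A \otimes \mathcal{M}) = f^*\mathcal{M}$ (as $K_A = \oo_A$), and since $p_g(S)=h^0(K_S)$ should match $h^0(A,\mathcal{M})$ via $f_*\omega_X = \mathcal{M} \oplus \oo_A$ (using $K_A=\oo_A$), the condition $p_g=2$ together with $q=2$ forces constraints on $\mathcal{M}$.

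\textbf{Identifying the polarization type.} From $p_g(S)=2$ and the splitting $H^0(A, f_*\omega_X) = H^0(A,\mathcal{M}) \oplus H^0(A,\oo_A)$, the invariant contribution should give $h^0(A,\mathcal{M})=2$; combined with $q=2$ (so the antiinvariant part of $H^1$ vanishes, $h^1(A,\mathcal{M})=0$), Riemann-Roch on the abelian surface yields $\chi(\mathcal{M}) = \tfrac{1}{2}\mathcal{M}^2 = 2$, hence $\mathcal{M}^2 = 4$. An ample line bundle on an abelian surface with $\mathcal{M}^2=4$ and $h^0=2$ is precisely a polarization of type $(1,2)$, so I would set $\mathcal{L}_A := \mathcal{M}$. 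This is the point where I would need to verify that $\mathcal{M}$ is genuinely ample (positive-definite) rather than merely having the right self-intersection — I expect this to follow from the fact that $X$ has only isolated singularities and $S$ is of general type, forcing $D_A$ (and hence $2\mathcal{M}$) to be big and nef, but checking ampleness carefully is one delicate point.

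\textbf{Controlling the singularities of $D_A$.} With $\mathcal{L}_A$ of type $(1,2)$ fixed, $D_A \in |2L_A|$ has arithmetic genus determined by $(2L_A)^2 = 16$. The heart of the argument is the computation of $K_S^2$. Using the formula for a double cover resolved by blowing up its singularities, $K_S^2$ equals $(f^*\mathcal{L}_A)^2$ minus correction terms indexed by the singular points of $D_A$, where each singularity of multiplicity $m$ (with $m$ even, $m=2k$) contributes $2(k-1)^2$ to the defect. Since $(f^*\mathcal{L}_A)^2 = 2 L_A^2 = 8$, imposing $K_S^2=6$ forces the total singularity defect to be exactly $2$. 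An ordinary quadruple point ($m=4$, $k=2$) contributes exactly $2(2-1)^2 = 2$, consuming the entire budget, so $D_A$ must have precisely one non-negligible singularity, and it must be an ordinary quadruple point. I would then invoke Corollary \ref{cor.sing.curv}, which classifies reduced divisors numerically equivalent to $2L$ with a quadruple point into cases $(i)$--$(iii)$, to conclude that the singularity at $p$ is indeed an \emph{ordinary} quadruple point as claimed.

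\textbf{Main obstacle.} The hard part will be the precise bookkeeping in the $K_S^2$ computation: I must account for all the ``negligible'' singularities (nodes, and the infinitely-near points created when resolving the quadruple point) and show they contribute nothing to the canonical defect, so that the entire discrepancy $8 - 6 = 2$ is attributable to a single ordinary quadruple point and cannot instead be distributed among, say, several milder singularities (which a non-reduced or differently-configured $D_A$ might produce). Establishing that $D_A$ is reduced is already granted by the normality of $X$ noted before the proposition, but ruling out alternative singularity configurations — for instance two points of multiplicity giving the same defect but violating the normality or minimality of $S$ — will require combining the defect arithmetic with the constraint that $S$ is minimal of general type (so no $(-1)$-curves appear in the resolution), and this interplay is where the argument is most likely to be subtle.
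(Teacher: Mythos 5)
Your proposal contains two genuine gaps, one of which invalidates the first half of the argument. The step identifying the polarization type is wrong: you cannot read off $p_g(S)$ and $q(S)$ from $f_*\omega_X=\oo_A\oplus\mathcal{M}$, because $X$ has a \emph{non-rational} singularity over the quadruple point of $D_A$ (the exceptional curve of its resolution is an elliptic curve with self-intersection $-2$), so $p_g(S)<h^0(X,\omega_X)$. Concretely, in the true situation one has $h^0(A,\mathcal{M})=2$, hence $h^0(X,\omega_X)=1+h^0(A,\mathcal{M})=3$, while $p_g(S)=2$. Note also that your own splitting $H^0(X,\omega_X)=H^0(A,\oo_A)\oplus H^0(A,\mathcal{M})$ is inconsistent with your conclusion: equating it with $p_g(S)=2$ would give $h^0(A,\mathcal{M})=1$, hence $\chi(\mathcal{M})=1$ and $\mathcal{M}^2=2$, i.e.\ a \emph{principal} polarization --- the wrong answer. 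The deeper problem is circularity: the correction term relating $p_g(S)$ to cohomology on $A$ depends on the singularities of $D_A$, which is exactly what the proposition is trying to determine. The paper avoids this by working entirely on the canonical resolution and using \emph{both} formulas of \eqref{eq.res.can} simultaneously: from $\chi(\oo_{\bar{S}})=1$ one gets $L_A^2=2+\sum_i m_i(m_i-1)$, and substituting into the $K^2$ formula yields $K_{\bar{S}}^2=4+2\sum_i(m_i-1)$; then $6=K_S^2\geq K_{\bar{S}}^2$ forces $\sum_i(m_i-1)\leq 1$, hence exactly one $m_i=2$, and only \emph{then} does $L_A^2=4$ follow.

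The second gap is the one you flag as the ``main obstacle'' but do not resolve, and your proposed tool cannot resolve it. Granting $L_A^2=4$, a defect budget of $2$ does not single out the ordinary quadruple point: a $[3,3]$-point (in the paper's terminology, an infinitely near pair with $m_j=1$, $m_i=2$) contributes exactly the same amount to both the $\chi$ and the $K^2$ formulas. Corollary \ref{cor.sing.curv} is of no help here, since it classifies reduced curves \emph{already assumed} to have a quadruple point; it does not exclude the $[3,3]$ configuration (indeed, Remark \ref{rem.[3,3]} states that the authors do not even know whether a curve in $|2L_A|$ with a $[3,3]$-point exists). The paper's exclusion is by minimality: in the $[3,3]$ case the canonical resolution $\bar{S}$ contains a $(-1)$-curve, so the minimal model satisfies $K_S^2=7$, contradicting $K_S^2=6$. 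Relatedly, your statement that ``$K_S^2=6$ forces the total defect to be exactly $2$'' silently assumes $K_S^2=K_{\bar{S}}^2$, i.e.\ that the canonical resolution is already minimal --- which is precisely what fails for a $[3,3]$-point; so the minimality analysis must enter \emph{before} the defect count can be closed, not as an afterthought.
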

\begin{proof}
$D_A$ is linearly equivalent to $2L_A$ for some divisor $L_A$ in
$A$. There is a ``canonical resolution" diagram
\begin{equation} \label{dia.resolution}
\begin{CD}
\bar{S}  @> >> X\\
@V{\beta}VV  @VV f V\\
B @> {\sigma}>> A,\\
\end{CD}
\end{equation}
where $\bar{S}$ is smooth and $\sigma \colon B \to A$ is composed of
a series of blow-ups, see \cite[Chapter III, Section 7]{BHPV03}. Let
$x_1,x_2, \ldots, x_r$ be the centers of these blow-ups, and let
$E_i$ be the inverse image of $x_i$ on $B$ (with right
multiplicities such that $E_i E_j=- \delta_{ij}, \; K_B=\sigma^*
K_A+\sum_{i=1}^r E_i$. Then the branch locus $D_B$ of $\beta \colon
\bar{S} \to B$ is smooth and can be written as
\begin{equation} \label{hat B}
D_B=\sigma^* D_A- \sum_{i=1}^r d_i E_i,
\end{equation}
where the $d_i$  are even positive integers, say $d_i=2m_i$. Let us
recall a couple of definitions:
\begin{itemize}
\item a \emph{negligible singularity} of $D_A$ is a point $x_j$ such that
$d_j=2$, and $d_i \leq 2$ for any point $x_i$ infinitely near to
$x_j;$
\item a $[2d+1, 2d+1]$- \emph{singularity} of $D_A$ is a pair $(x_i, \, x_j)$ such that
$x_i$ belongs to the first infinitesimal neighbourhood of $x_j$ and $d_i=2d+2, \,
d_j=2d$.
\end{itemize}
For example, a double point and an ordinary triple point are
negligible singularities, whereas a $[3,3]$-point is not. By using
the formulae in \cite[p. 237]{BHPV03} we obtain

\begin{equation} \label{eq.res.can}
2=2 \chi(\oo_{\bar{S}})=L_A^2- \sum_{i=1}^r m_i(m_i-1), \quad
K_{\bar{S}}^2=2L_A^2-2 \sum_{i=1}^r  (m_i-1)^2,
\end{equation}
which imply
\begin{equation*}
6=K_S^2 \geq K_{\bar{S}}^2 = 4 + 2 \sum_{i=1}^r (m_i-1).
\end{equation*}
If $m_i=1$ for all $i$, then all the $x_i$ are negligible
singularities and \eqref{eq.res.can} gives $K_S^2=4$, a
contradiction. Then we can assume $m_1=2$, $m_2= \ldots =m_r=1$.
Therefore \eqref{eq.res.can} yields $L_A^2=4$, that is
$\mL_A:=\oo_A(L_A)$ is a polarization of type $(1, \, 2)$ on $A$.
Now we have two possibilities:
\begin{itemize}
\item[$\boldsymbol{(i)}$] $x_1$ is not infinitely near to $x_2$; then
$D_A \in |2L_A|$ contains an ordinary quadruple point $p$ and
(possibly) some negligible singularities;
\item[$\boldsymbol{(ii)}$] $x_1$ is infinitely near to $x_2$; then $D_A \in
|2L_A|$ contains a point $p$ of type $[3,\,3]$ and (possibly) some
negligible singularities.
\end{itemize}
But in case $(ii)$ the surface $\bar{S}$ contains a $(-1)$-curve,
hence $K_S^2=7$, a contradiction. Therefore $D_A$ must be a curve of
type $(i)$. The existence of such a curve was proven in Corollary
\ref{cor.sing.curv}, so we are done.
\end{proof}

\begin{rmk} \label{rem.[3,3]}
The argument used in the proof of Proposition
\emph{\ref{prop.branch}} shows that, if we were able to find a curve
in $|2L_A|$ with a singular point of type $[3,\,3]$, then we could
construct a surface $S$ with $p_g=q=2$ and $K_S^2=7$. Unfortunately,
at present we do not know whether such a curve exists.
\end{rmk}

\begin{proposition} \label{prop.no product}
$\mL_A$ is not a product polarization.
\end{proposition}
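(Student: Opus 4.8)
The plan is to argue by contradiction, exploiting the elliptic fibration that a product structure would supply together with the intersection-theoretic constraint coming from the ordinary quadruple point of the branch curve $D_A$ furnished by Proposition \ref{prop.branch}.

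First I would suppose that $\mathcal{L}_A$ is a product polarization. By Proposition \ref{prop.barth.class}, case $(d)$, this forces $A=E\times F$ to be a product of two elliptic curves with $L_A=E+2F$, where I also use $E$ and $F$ for the numerical classes of the two rulings, so that $E^2=F^2=0$ and $EF=1$. In particular $2L_A\equiv 2E+4F$, and the projection onto the first factor $\pi\colon A\to E$ is an elliptic fibration whose fibers $F_a=\{a\}\times F$ are all numerically equivalent to $F$.

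Next I would intersect $D_A$ with such a fiber. Since $D_A\in|2L_A|$, one has $F_a\cdot D_A=F\cdot(2E+4F)=2$. Let $p$ be the ordinary quadruple point of $D_A$, so $\mathrm{mult}_p D_A=4$, and let $F_a$ be the fiber of $\pi$ through $p$, which is smooth at $p$. If $F_a$ were not a component of $D_A$, then the local intersection number of $F_a$ and $D_A$ at $p$ would be at least $\mathrm{mult}_p(F_a)\cdot\mathrm{mult}_p(D_A)=1\cdot 4=4$, contradicting $F_a\cdot D_A=2$. Hence $F_a$ must be a component of $D_A$. (Note that the other ruling gives $E\cdot D_A=4$ and so no constraint; it is essential to use the fiber class $F$.)

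The heart of the argument, and the step where I expect the care to lie, is converting this into an outright contradiction using that $D_A$ is \emph{reduced} (which holds because $X$ is normal). Being reduced, $D_A$ contains $F_a$ with coefficient $1$, so I may write $D_A=F_a+D'$ with $D'\in|2E+3F|$ reduced and $F_a\not\subseteq D'$; comparing multiplicities at $p$ gives $\mathrm{mult}_p D'=4-1=3$. Repeating the estimate, the local intersection number of $F_a$ and $D'$ at $p$ is at least $1\cdot 3=3$, whereas $F_a\cdot D'=F\cdot(2E+3F)=2$, a contradiction. Thus no reduced curve numerically equivalent to $2L_A$ can carry an ordinary quadruple point when the polarization is of product type, and therefore $\mathcal{L}_A$ cannot be a product polarization.
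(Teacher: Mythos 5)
Your proof is correct and is essentially the paper's own argument: intersect $D_A$ with the fiber $F_a$ of the product ruling through the quadruple point $p$, use B\'ezout with $F_a \cdot D_A = 2 < 4$ to force $F_a \subseteq D_A$, and then repeat with the residual curve, which has a triple point at $p$ but intersection $2$ with $F_a$, to reach a contradiction with reducedness of $D_A$. The only cosmetic difference is the order of deployment: you invoke reducedness first (so that $F_a \not\subseteq D'$, making the second B\'ezout estimate the contradiction), while the paper applies B\'ezout twice and concludes that $F_a$ would appear in $D_A$ with multiplicity at least $2$, contradicting reducedness at the end.
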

\begin{proof}
Assume by contradiction that $\mL_A$ is a product polarization. Then
$A=E \times F$, with natural projection maps $\pi_E \colon A \to E$
and $\pi_F \colon A \to F$, and $L \equiv E+2F$. Let $F_p$ be the
fibre of $\pi_E$ passing through $p$. Since $D_A$ has a quadruple
point at $p$ and $D_A F_p=2$,  B$\acute{\textrm{e}}$zout  theorem
implies that $F_p$ is a component of $D_A$. Similarly, since
$D_A-F_p$ has a triple point at $p$ and
 $(D_A-F_p) F_p=2$, it follows that $F_p$ is a component of $D_A-F_p$.
Therefore $D_A$ contains the curve $F_p$ with multiplicity at
least $2$, which is impossible since $D_A$ must be reduced.
\end{proof}

Up to a translation we can now suppose $p=o$ and by using Corollary
\ref{cor.2L-3} we can write $\mL_A^2 = \mL^2 \otimes \mQ$, where
$\mathcal{Q} \in \textrm{im}\,\phi_2$ and $\mL$ is a symmetric
polarization, not of product type, such that $h^0(A, \, \mL^2
\otimes \mathcal{I}_o^4)=2$.

In the rest of this section we assume for simplicity that $D_A$
contains no negligible singularities besides the quadruple point
$o$; this is an open condition, equivalent to the ampleness of
$K_S$. Hence the map $\sigma \colon B \to A$ is just the blow-up at
$o$, we have $\bar{S}=S$ and \eqref{dia.resolution} induces the
following commutative diagram
\begin{equation*} \label{dia.beta}
\xymatrix{
S \ar[r]^{\beta} \ar[dr]_{\alpha} & B \ar[r]^{\varphi} \ar[d]^{\sigma} & \mathbb{P}^1 \\
 & A},
\end{equation*}
where $\beta \colon S \to B$ is a \emph{finite} double cover and
$\varphi \colon B \to \mathbb{P}^1$ is the morphism induced by the
base-point free pencil $|\sigma^*(2L)-4E|$. The double cover $\beta$
is branched along a \emph{smooth} divisor
\begin{equation*}
D_B \in |\sigma^*(2L+Q)-4E|,
\end{equation*}
hence it is defined by a square root of $\oo_B(D_B)$, namely by
$\mL_B:=\oo_B(\sigma^*(L+Q^{1/2})-2E)$, where $\mathcal{Q}^{1/2}$
is a square root of $\mQ$.

\begin{proposition} \label{prop.invariants}
$S$ is a minimal surface of general type with $p_g=q=2$ and
$K_S^2=6$, unless $\mQ=\mathcal{Q}^{1/2}=\oo_A$. In the last case
we have instead $p_g=q=3$ and $K_S^2=6$.
\end{proposition}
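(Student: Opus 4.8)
The plan is to compute the invariants of $S$ directly from the finite double cover $\beta \colon S \to B$, using the standard formulas for double covers in terms of the branch data. Recall that $\beta$ is branched along the smooth divisor $D_B \in |\sigma^*(2L+Q)-4E|$ and is defined by the line bundle $\mL_B = \oo_B(\sigma^*(L+Q^{1/2})-2E)$, so that $\mL_B^{\otimes 2} = \oo_B(D_B)$. For a finite double cover $\beta \colon S \to B$ with $S$ smooth and $B$ smooth, one has the direct image decomposition $\beta_* \oo_S = \oo_B \oplus \mL_B^{-1}$, and consequently
\begin{equation*}
\chi(\oo_S) = \chi(\oo_B) + \chi(\mL_B^{-1}), \qquad
h^i(S, \, \oo_S) = h^i(B, \, \oo_B) \oplus h^i(B, \, \mL_B^{-1}).
\end{equation*}
Similarly, the canonical bundle formula gives $\omega_S = \beta^*(\omega_B \otimes \mL_B)$, from which $K_S^2 = 2(K_B + L_B)^2$. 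First I would fix notation: since $\sigma \colon B \to A$ is the blow-up of the abelian surface $A$ at $o$, we have $\omega_B = \oo_B(E)$ (as $\omega_A = \oo_A$), and $K_B + L_B = E + \sigma^*(L+Q^{1/2}) - 2E = \sigma^*(L+Q^{1/2}) - E$. Computing the self-intersection using $(\sigma^* L)^2 = L^2 = 4$, $\sigma^*L \cdot E = 0$, $E^2 = -1$ and the fact that $Q^{1/2}$ is numerically trivial, I expect $K_S^2 = 2(4 - 1) = 6$, which settles the value of $K_S^2$ uniformly in all cases.

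The substance of the statement lies in computing $p_g$ and $q$, i.e.\ the cohomology of $\mL_B^{-1} = \oo_B(-\sigma^*(L+Q^{1/2})+2E)$. By Serre duality on the surface $B$ one has $h^i(B, \, \mL_B^{-1}) = h^{2-i}(B, \, \omega_B \otimes \mL_B)$, and $\omega_B \otimes \mL_B = \oo_B(\sigma^*(L+Q^{1/2})-E)$. The plan is to push this down to $A$ via $\sigma$: since $\sigma$ is the blow-up of a smooth point, $R^j\sigma_* \oo_B = 0$ for $j>0$ and $\sigma_* \oo_B(-E) = \mathcal{I}_o$, so that $\sigma_* \oo_B(\sigma^*(L+Q^{1/2})-E) = \mL \otimes \mQ^{1/2} \otimes \mathcal{I}_o$ and the higher direct images vanish. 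Hence
\begin{equation*}
h^i(B, \, \omega_B \otimes \mL_B) = h^i(A, \, \mL \otimes \mQ^{1/2} \otimes \mathcal{I}_o).
\end{equation*}
Because $\mL \otimes \mQ^{1/2}$ is a $(1,2)$-polarization, it has $h^0 = 2$, $h^1 = h^2 = 0$ (here I would invoke that $\mL \otimes \mQ^{1/2}$ is non-degenerate of the same type as $\mL$, using the vanishing theorem for polarizations, and that imposing one base point at $o$ drops $h^0$ by one precisely when $o$ is not already a base point). The key dichotomy is whether $o$ lies in the base locus: a general $(1,2)$-polarization has its four base points, and the condition $\mQ = \mQ^{1/2} = \oo_A$ is exactly the degenerate case where $\mL \otimes \mQ^{1/2} = \mL$ has $o$ as a base point (since we arranged $e_0 = o$ as a base point of $|L|$), forcing $h^0(A, \, \mL \otimes \mathcal{I}_o) = h^0(A, \, \mL) = 2$ rather than $1$.

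Assembling these pieces gives the result. In the generic case ($\mQ^{1/2} \neq \oo_A$, so $o$ is not a base point of $|L+Q^{1/2}|$) I expect $h^0(A, \, \mL\otimes\mQ^{1/2}\otimes\mathcal{I}_o) = 1$ and the higher cohomology to vanish, whence via Serre duality $h^0(B,\mL_B^{-1}) = h^2 = 0$, $h^1(B,\mL_B^{-1}) = h^1 = 0$, $h^2(B,\mL_B^{-1}) = h^0 = 1$; combined with $q(B) = q(A) = 2$, $p_g(B) = p_g(A) = 1$ (the blow-up of a point on an abelian surface has $p_g = q = 1$... more precisely $h^0(\omega_B)=h^0(\omega_A)=1$ and $h^1(\oo_B)=2$), this yields $q(S) = q(B) + h^1(B,\mL_B^{-1}) = 2$ and $p_g(S) = p_g(B) + h^0(B, \omega_B \otimes \mL_B) - \text{(bookkeeping)}$, which I would reconcile to $p_g = q = 2$. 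In the exceptional case $\mQ = \mQ^{1/2} = \oo_A$, the extra section ($h^0$ jumps from $1$ to $2$) propagates to raise both $p_g$ and $q$ by one, giving $p_g = q = 3$. The main obstacle I anticipate is the careful cohomological bookkeeping on $B$ versus $A$—in particular keeping track of how the single base point $o$ affects $h^0$, and correctly matching the two regimes through Serre duality and the double-cover splitting; the geometry is already supplied by the earlier propositions, so the difficulty is purely in organizing these linear-algebra identities without sign or index errors. Minimality and general type follow since $K_S$ is nef and big (indeed ample under our running ampleness assumption) with $K_S^2 = 6 > 0$, so no separate argument is needed beyond citing the ampleness of $\omega_B \otimes \mL_B$ pulled back.
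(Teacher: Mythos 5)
Your proposal is correct and follows essentially the same route as the paper: both exploit the double-cover splitting of the pushforward (you via $\beta_*\oo_S=\oo_B\oplus\mL_B^{-1}$ plus Serre duality, the paper via $\beta_*\omega_S=\omega_B\oplus(\omega_B\otimes\mL_B)$), reduce everything to $h^i(A,\,\mL\otimes\mQ^{1/2}\otimes\mathcal{I}_o)$ through the blow-down, and conclude by the dichotomy $h^0=1$ if $\mQ^{1/2}\neq\oo_A$ versus $h^0=2$ if $\mQ^{1/2}=\oo_A$. The only difference is cosmetic: you justify that dichotomy by the base-point/translation argument and compute $q$ directly from the $h^1$ splitting, whereas the paper asserts the dichotomy and deduces $q=p_g$ from $\chi(\oo_S)=1$.
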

\begin{proof}
Standard formulae for double covers (\cite[p. 237]{BHPV03}) give
$\chi(\oo_S)=1$ and $K_S^2=6$. Moreover we have
$\beta_*\omega_S=\omega_B \oplus (\omega_B \otimes \mL_B)$, hence
we obtain
\begin{equation} \label{eq.LB-1}
p_g(S)=h^0(B, \, \oo_B(E)) + h^0(B, \, \oo_B(\sigma^*(L +
Q^{1/2})-E))=1+h^0(A, \, \mL \otimes \mQ^{1/2} \otimes
\mathcal{I}_o).
\end{equation}
If $\mathcal{Q}^{1/2}$ is not trivial then $h^0(A, \,\mL \otimes
\mQ^{1/2} \otimes \mathcal{I}_o)=1$, otherwise $h^0(A, \,\mL
\otimes \mathcal{I}_o)=2$.
\end{proof}

\begin{rmk} \label{rem.pg=3}
If $\mQ=\mQ^{1/2}=\oo_A$ then $S$ is the symmetric product of a
smooth curve of genus $3$, see \emph{\cite{HP02}} and
\emph{\cite{Pi02}}. Let us give an alternative construction of the
double cover $f \colon S \to A$ in this particular case. Take a
smooth curve $C$ of genus $3$, admitting a double cover $\varphi
\colon C \to E$ onto an elliptic curve $E$. Let $o$ be the
identity in the group law of $E$ and for all $x \in C$ let us
denote by $x'$ the conjugate point of $x$ with respect to the
involution $C \to C$ induced by $\varphi$. Then
$S:=\emph{Sym}^2(C)$ contains the elliptic curve $Z := \{x+x' \, |
\, x \in C \}$ which is isomorphic to $E$. Moreover, there is a
morphism
\begin{equation*}
\begin{split}
& \bar{\alpha} \colon S \to \emph{Pic}^0(C) \quad given \, \, by \\
& \bar{\alpha}(x+y)=\oo_C(x+y-\varphi^*(o)).
\end{split}
\end{equation*}
Now take any point $x+x' \in Z$ and let
$a:=\varphi(x)=\varphi(x')$. We have
\begin{equation*}
\bar{\alpha}(x+x')= \oo_C(x+x'-\varphi^*(o))=\varphi^* \oo_E(a-o)
\in \varphi^* \emph{Pic}^0(E),
\end{equation*}
that is the induced map
\begin{equation*}
\alpha \colon S \to A:=\emph{Pic}^0(C)/\varphi^*\emph{Pic}^0(E)
\end{equation*}
contracts $Z$ to a point. Moreover $\alpha$ has generic degree
$2$; in fact $\alpha(x+y)=\alpha(x'+y')$ for all $x$,  $y \in C$.
\end{rmk}

Since we are interested in the case $p_g(S)=q(S)=2$, in the sequel
we always assume $\mathcal{Q}^{1/2} \neq \oo_A$. Summing up, we
have proven the following result.

\begin{theorem} \label{th.A}
Given an  abelian surface $A$ with a symmetric polarization $\mL$ of
type $(1, \,2)$, not of product type,  for any $\mQ \in \emph{im}\,
\phi_2$ there exists a curve $D_A \in |\mL^2 \otimes \mQ|$ whose
unique non-negligible singularity is an ordinary quadruple point at
the origin $o \in A$. Let $\mQ^{1/2}$ be a square root of $\mQ$, and
if $\mQ=\oo_A$ assume moreover $\mQ^{1/2} \neq \oo_A$. Then the
minimal desingularization $S$ of the double cover of $A$ branched
over $D_A$ and defined by $\mL \otimes \mQ^{1/2}$ is a minimal
surface of general type with $p_g=q=2$, $K_S^2=6$ and Albanese map
of degree $2$.

Conversely, every minimal surface of general type with $p_g=q=2$,
$K_S^2=6$ and Albanese map of degree $2$ can be constructed in
this way.

\end{theorem}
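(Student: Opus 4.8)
The plan is to assemble the results proved earlier in this section into the two asserted directions; since the statement is essentially a summary, the proof reduces to organizing these inputs and checking that the two constructions are mutually inverse.

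For the direct (construction) direction, I would fix $\mathcal{Q} \in \textrm{im}\,\phi_2$ and first invoke Corollary \ref{cor.sing.curv} — together with Propositions \ref{prop.2L-1} and \ref{prop.2L-2}, which produce the relevant members of $|\mathcal{L}^2 \otimes \mathcal{Q} \otimes \mathcal{I}_o^4|$ — to obtain a reduced curve $D_A \in |\mathcal{L}^2 \otimes \mathcal{Q}|$ whose only non-negligible singularity is an ordinary quadruple point at $o$. Since $(\mathcal{L} \otimes \mathcal{Q}^{1/2})^{\otimes 2} = \mathcal{L}^2 \otimes \mathcal{Q} = \oo_A(D_A)$, the line bundle $\mathcal{L} \otimes \mathcal{Q}^{1/2}$ is a square root of $\oo_A(D_A)$ and hence defines a double cover $f \colon X \to A$ branched over $D_A$; let $S$ be its minimal desingularization, built through the canonical resolution \eqref{dia.resolution}. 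Proposition \ref{prop.invariants} then yields directly that $S$ is minimal of general type with $\chi(\oo_S)=1$, $K_S^2 = 6$ and $p_g = q = 2$ exactly when $\mathcal{Q}^{1/2} \neq \oo_A$ (the excluded case $\mathcal{Q} = \mathcal{Q}^{1/2} = \oo_A$ giving $p_g = q = 3$, cf. Remark \ref{rem.pg=3}); any further negligible singularities of $D_A$ contribute trivially to \eqref{eq.res.can} and so do not affect these invariants. Finally $\alpha \colon S \to A$ pulls back the two independent holomorphic $1$-forms of $A$ injectively into the $2$-dimensional space $H^0(S, \Omega^1_S)$, so $\alpha^*$ is an isomorphism on $1$-forms; hence $A$ is the Albanese variety of $S$ and $\alpha$ is, up to translation, the Albanese map, of degree $2$ by construction.

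For the converse, let $S$ be a minimal surface of general type with $p_g = q = 2$, $K_S^2 = 6$ and Albanese map $\alpha$ of degree $2$. Passing to the Stein factorization $S \to X \xrightarrow{f} A$ as in the setup, $f$ is a finite double cover with reduced branch curve $D_A$. Proposition \ref{prop.branch} forces $D_A \in |2L_A|$ for a $(1,2)$-polarization $\mathcal{L}_A$, its unique non-negligible singularity being an ordinary quadruple point (the alternative $[3,3]$-point being excluded, since it would create a $(-1)$-curve and force $K_S^2 = 7$). Proposition \ref{prop.no product} rules out the product case, and after translating the quadruple point to $o$ Corollary \ref{cor.2L-3} permits writing $\oo_A(D_A) = \mathcal{L}_A^2 = \mathcal{L}^2 \otimes \mathcal{Q}$ with $\mathcal{Q} \in \textrm{im}\,\phi_2$ and $\mathcal{L}$ symmetric and non-product. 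The square root defining $f$ then satisfies $(\,\cdot\, \otimes \mathcal{L}^{-1})^{\otimes 2} = \mathcal{Q}$, so it is of the form $\mathcal{L} \otimes \mathcal{Q}^{1/2}$, exhibiting $S$ as an output of the construction; and $\mathcal{Q}^{1/2} \neq \oo_A$ precisely because $p_g = q = 2$ rather than $3$, again by Proposition \ref{prop.invariants}.

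The genuinely substantive work has already been isolated in the preparatory propositions, so the remaining delicate points are essentially bookkeeping: confirming that $A$ is truly the Albanese variety (and not merely an abelian surface dominated by $S$), and checking that the square root produced by the Stein factorization in the converse is the same as the one used in the construction, so that the two directions are inverse to each other. Were one to prove the theorem from scratch, the conceptual heart would instead be the pair Corollary \ref{cor.sing.curv} and Proposition \ref{prop.branch}: respectively realizing a branch curve carrying an ordinary quadruple point inside the very restrictive systems $|\mathcal{L}^2 \otimes \mathcal{Q} \otimes \mathcal{I}_o^4|$, and extracting from the numerical constraint $K_S^2 = 6$, via the even multiplicities in the canonical resolution and the identity \eqref{eq.res.can}, that the branch locus must carry exactly such a singularity.
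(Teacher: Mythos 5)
Your proposal is correct and follows essentially the same route as the paper, whose own proof of Theorem \ref{th.A} is literally a ``summing up'' of Proposition \ref{prop.branch}, Proposition \ref{prop.no product}, Proposition \ref{prop.invariants}, Corollary \ref{cor.2L-3} and Corollary \ref{cor.sing.curv} (with Propositions \ref{prop.2L-1} and \ref{prop.2L-2} providing existence) --- exactly the ingredients you assemble, in the same order. The one step you state a bit loosely is the identification of $A$ with $\mathrm{Alb}(S)$ in the construction direction: an isomorphism on $1$-forms does not by itself exclude that the induced map $\mathrm{Alb}(S) \to A$ is a nontrivial isogeny, but this is ruled out in one line, since a degree-$2$ isogeny would force $S \to \mathrm{Alb}(S)$ to be birational, contradicting that $S$ is of general type.
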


In order to proceed with the study of our surfaces, let us
introduce the following

\begin{definition} \label{def.type}
Let $S$ be a minimal surface of general type with $p_g=q=2$,
$K_S^2=6$ and Albanese map of degree $2$.
\begin{itemize}
\item If $\mQ =\oo_A$ we say that $S$ is a \emph{surface of type}
$I$. Furthermore, if $\mathcal{Q}^{1/2} \notin \textrm{im} \,
\phi_2^{\times}$ we say that $S$ is of type $Ia$, whereas if
$\mathcal{Q}^{1/2} \in \textrm{im} \, \phi_2^{\,\times}$  we say
that $S$ is of type $Ib$. \item If $\mQ \in
\textrm{im}\,\phi_2^{\, \times}$ we say that $S$ is a
\emph{surface of type} $II$.
\end{itemize}
\end{definition}

\begin{rmk} \label{rem.types}
If $S$ is a surface of type $I$, then $D_A$ is as in Corollary
$\emph{\ref{cor.sing.curv}}$, case $(i)$ or $(ii)$. If $S$ is a
surface of type $II$, then $D_A$ is as in Corollary
\emph{\ref{cor.sing.curv}}, case $(iii)$. See Figures
$\eqref{fig.type.I}$ and $\eqref{fig.type.II}$.
\end{rmk}
\begin{figure}[H]
\begin{center}
\includegraphics*[totalheight=4 cm]{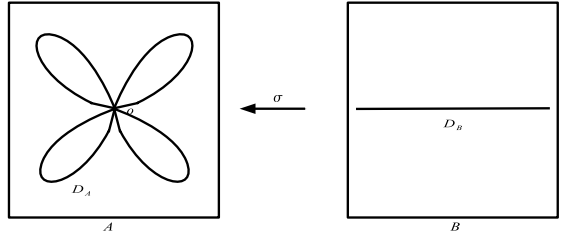}
\caption{The branch curves $D_A$ and $D_B$ for a general surface
of type $I$} \label{fig.type.I}
\end{center}
\end{figure}

\begin{figure}[H]
\begin{center}
\includegraphics*[totalheight=4 cm]{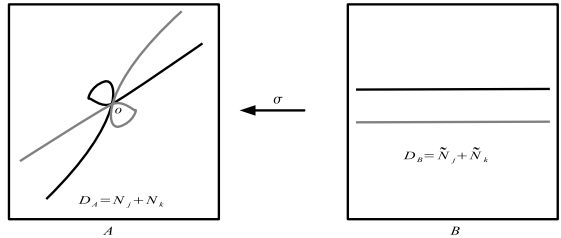}
\caption{The branch curves $D_A$ and $D_B$ for a surface of type
$II$} \label{fig.type.II}
\end{center}
\end{figure}

We denote by $R \subset S$ the ramification divisor of $\beta \colon
S \to B$ and
by $Z$ the divisor $\beta^*E$. Then $Z$ is an elliptic curve and $Z^2=-2$. \\

\begin{proposition} \label{prop.pencil.D}
The pullback via $\beta \colon S \to B$ of the general curve $D$ in
 the pencil $|D|=|\sigma^*(2L) - 4E|$ is reducible if and only if
 $S$ is of type $Ib$.
\end{proposition}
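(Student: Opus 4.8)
The plan is to reduce the reducibility of $\beta^* D$ to the triviality of a restricted $2$-torsion line bundle on $D$, and then to read off the answer from Corollary \ref{cor.restriction}. Recall that a general $D \in |\sigma^*(2L)-4E|$ is a smooth irreducible curve of genus $3$ with $D^2=0$, and that $\beta \colon S \to B$ is a finite double cover branched along $D_B \in |\sigma^*(2L+Q)-4E|$ and defined by $\mathcal{L}_B=\oo_B(\sigma^*(L+Q^{1/2})-2E)$.

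First I would show that for a general $D$ the induced double cover $\beta^{-1}(D) \to D$ is \emph{étale}. Since $\sigma^*\mQ$ is numerically trivial, $D_B$ is numerically equivalent to $D$, whence $D \cdot D_B = D^2 = 0$. As $D$ is irreducible and moves in a base-point-free pencil while $D_B$ is a fixed curve numerically equivalent to $D$, a general $D$ cannot share a component with $D_B$ (a common component would force $D=D_B$), so $D \cap D_B = \varnothing$ and the restriction $\beta^{-1}(D)\to D$ is unramified. I expect this disjointness — checking that a general member of the pencil genuinely avoids the fixed branch curve — to be the only point requiring care; everything afterwards is a direct translation.

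Next I would invoke the standard fact that an étale double cover of a connected smooth curve $D$ is disconnected, i.e. $\beta^*D$ is reducible and splits into two copies of $D$, if and only if its defining $2$-torsion line bundle $\mathcal{L}_B|_D$ is trivial. Here $\mathcal{L}_B|_D = \oo_D(\sigma^*(L+Q^{1/2})-2E)$, and indeed $(\mathcal{L}_B|_D)^2 = \oo_B(D_B)|_D = \oo_D(D_B\cap D) = \oo_D$ confirms that it is $2$-torsion. Thus $\beta^*D$ is reducible precisely when $\oo_D(\sigma^*(L+Q^{1/2})-2E)\cong \oo_D$.

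Finally I would apply Corollary \ref{cor.restriction} with $\mathcal{Q}^{1/2}$ in place of $\mQ$: the last condition holds exactly when $\mathcal{Q}^{1/2}\in \textrm{im}\,\phi_2$. Since we assume throughout that $\mathcal{Q}^{1/2}\neq \oo_A$, this is equivalent to $\mathcal{Q}^{1/2}\in \textrm{im}\,\phi_2^{\times}$. Matching against Definition \ref{def.type} then finishes the proof: in type $Ib$ one has $\mathcal{Q}^{1/2}\in \textrm{im}\,\phi_2^{\times}$ by definition; in type $Ia$ one has $\mathcal{Q}^{1/2}\notin \textrm{im}\,\phi_2^{\times}$; and in type $II$ the element $\mQ=(\mathcal{Q}^{1/2})^2 \in \textrm{im}\,\phi_2^{\times}$ is nontrivial $2$-torsion, so $\mathcal{Q}^{1/2}$ is genuinely of order $4$ and cannot lie in $\textrm{im}\,\phi_2^{\times}$. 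Hence $\beta^*D$ is reducible if and only if $S$ is of type $Ib$, as claimed.
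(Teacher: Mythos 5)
Your proof is correct and follows essentially the same route as the paper's: the paper likewise reduces the reducibility of $\beta^*D$ to the triviality of $\mathcal{L}_B \otimes \oo_D = \oo_D(\sigma^*(L+Q^{1/2})-2E)$ and then cites Corollary \ref{cor.restriction}. The only difference is that you spell out details the paper leaves implicit, namely the disjointness of a general $D$ from the branch curve $D_B$ (so that the restricted cover is \'etale and governed by a $2$-torsion line bundle) and the order-$4$ argument excluding type $II$, both of which are exactly the right justifications.
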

\begin{proof}
The restriction of $\beta$ to $D$ is the trivial double cover if and
only if $\mL_B \otimes \oo_D = \oo_D$, i.e. if and only if
$\oo_D(\sigma^*(L +Q^{1/2}) - 2E)= \oo_D$. Thus the result follows
from Corollary \ref{cor.restriction}.
\end{proof}

Now we want to describe the canonical system of our surfaces. Let
us analyze first surfaces of type $I$. Then $\mathcal{Q}^{1/2}$ is
a non-trivial, $2$-torsion line bundle and, for the general
surface $S$, the branch locus $D_B$ of $\beta \colon S \to B$ is a
smooth curve of genus $3$ belonging to the pencil
$|D|=|\sigma^*(2L) - 4E|$.

\begin{proposition} \label{prop.type I}
Let $S$ be a surface of type $I;$  then the following holds.
\begin{itemize}
\item[$\boldsymbol{(i)}$] If $S$ is of type $Ia$,
 the pullback via $\beta \colon S \to B$ of the pencil $|D|$ on $B$
is a base-point free pencil $|\Phi|$ on $S$, whose general element
$\Phi$ is a smooth curve of genus $5$ satisfying $\Phi Z=8$.
Moreover, the canonical system $|K_S|$ has no fixed part, hence
the general canonical curve of $S$ is irreducible. Finally, $2R
\in |\Phi|$. \item[$\boldsymbol{(ii)}$] If $S$ is of type $Ib$,
i.e. $\mathcal{Q}^{1/2}=\mQ_i$ for some $i \in \{1, \, 2,\, 3\}$,
there is a commutative diagram
\begin{equation} \label{dia.beta.mod}
\xymatrix{
S \ar[r]^{\beta} \ar[d]_{\phi} & B \ar[d]^{\varphi} \\
 \mathbb{P}^1 \ar[r]^b & \mathbb{P}^1,}
\end{equation}
where $b \colon \mathbb{P}^1 \to \mathbb{P}^1$ is a double cover
branched in two points, namely the point corresponding to the branch
locus $D_B$ and the point corresponding to the curve
$2\widetilde{N}_i$, where $N_i$ is the unique curves in $|\mL
\otimes \mathcal{Q}_i|$ with an ordinary double point at $o$ and
``$\thicksim$" stands for the strict transform in $B$. The general
fibre $\Phi$ of the map $\phi \colon S \to \mathbb{P}^1$ is a smooth
curve of genus $3$; moreover $Z$ is the fixed part of $|K_S|$ and
$|K_S|=Z + |\Phi|$, i.e. the canonical system is composed with the
pencil $|\Phi|$. Finally, $R \in |\Phi|$.
\end{itemize}
\end{proposition}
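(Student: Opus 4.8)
The plan is to compute the canonical bundle of $S$ via the double cover formula and then translate each assertion into a cohomological computation on $B$, using the results of Section~\ref{sec.prel} already at our disposal. Recall that for the finite double cover $\beta \colon S \to B$ branched along $D_B$ and defined by $\mL_B = \oo_B(\sigma^*(L+Q^{1/2}) - 2E)$ we have
\begin{equation*}
\omega_S = \beta^*(\omega_B \otimes \mL_B), \qquad \beta_* \oo_S = \oo_B \oplus \mL_B^{-1},
\end{equation*}
and since $\omega_B = \sigma^* \omega_A \otimes \oo_B(E) = \oo_B(E)$ (because $A$ is abelian), we get $\omega_B \otimes \mL_B = \oo_B(\sigma^*(L+Q^{1/2}) - E)$. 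The projection formula then gives
\begin{equation*}
H^0(S, \, \omega_S) = H^0(B, \, \omega_B \otimes \mL_B) \oplus H^0(B, \, \omega_B),
\end{equation*}
and the two summands match the decomposition $p_g = 2 = 1 + 1$ found in \eqref{eq.LB-1}. The second summand $H^0(B, \omega_B) = H^0(B, \oo_B(E))$ is one-dimensional and its unique section vanishes on $E$, pulling back to the divisor $Z = \beta^* E$; this is the origin of the fixed curve $Z$ in part $(ii)$.

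**Part $(i)$, type $Ia$.**

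First I would establish the pencil $|\Phi|$. By Proposition~\ref{prop.pencil.D}, in type $Ia$ the pullback $\beta^* D$ of a general $D \in |D| = |\sigma^*(2L) - 4E|$ is \emph{irreducible}, since we are not in type $Ib$; thus $|\Phi| := \beta^*|D|$ is a genuine base-point-free pencil on $S$ (base-point-freeness is inherited from $|D|$ on $B$, and $\beta$ is finite). For the genus and the intersection numbers I would use $\Phi = \beta^* D$ together with $\beta^* E = Z$: then $\Phi Z = (\beta^* D)(\beta^* E) = 2\,(D \cdot E)_B = 2 \cdot 4 = 8$, using $(D \cdot E)_B = (\sigma^*(2L) - 4E)\cdot E = 4$. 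The genus of $\Phi$ follows from adjunction, $2g(\Phi) - 2 = \Phi(\Phi + K_S)$: since $\Phi^2 = 2 D^2 = 0$ and $\Phi K_S = \Phi \cdot \beta^*(\sigma^*(L+Q^{1/2})-E) = 2\,D\cdot(\sigma^*(L+Q^{1/2})-E)$, a Bézout-type computation on $B$ gives $g(\Phi) = 5$. To see $|K_S|$ has no fixed part, I would observe that $K_S = \beta^*(\sigma^*(L+Q^{1/2})-E)$ and that $2R = \beta^* D_B$ with $D_B \in |D|$, so $2R \in \beta^*|D| = |\Phi|$; combined with the two independent canonical sections coming from the summands above, which have no common component (one vanishes on $Z = \beta^* E$, the other does not, because $\mL_B$ restricted to $E$ is non-trivial in type $Ia$ — here is where I must use $Q^{1/2} \notin \emph{im}\,\phi_2^\times$ via Corollary~\ref{cor.restriction}), I conclude $\mathrm{Bs}|K_S|$ is finite, so $|K_S|$ has no fixed part and the general canonical curve is irreducible.

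**Part $(ii)$, type $Ib$, and the main obstacle.**

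In type $Ib$ we have $Q^{1/2} = \mQ_i \in \emph{im}\,\phi_2^\times$, so by Corollary~\ref{cor.restriction} the restriction $\mL_B \otimes \oo_D = \oo_D$, meaning $\beta^* D$ \emph{splits} into two components for every $D$ in the pencil; this is the content of Proposition~\ref{prop.pencil.D}. The construction of the diagram \eqref{dia.beta.mod} is the delicate point and the step I expect to be the main obstacle: I would argue that the splitting of $\beta^* D$ equips each fibre of $\varphi \colon B \to \mathbb{P}^1$ with a disconnected preimage, so that $\beta$ factors the fibration $\varphi \circ \beta$ through an intermediate $\mathbb{P}^1$, yielding the new fibration $\phi \colon S \to \mathbb{P}^1$ whose fibre $\Phi$ is \emph{one} component of $\beta^* D$. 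The map $b \colon \mathbb{P}^1 \to \mathbb{P}^1$ is then the degree-$2$ map on bases; its branch points are exactly the members of $|D|$ where the two components of $\beta^* D$ come together, i.e.\ where the double cover is \emph{not} split over $D$, which forces $D_B$ (the branch member) and the non-reduced member $2\widetilde N_i$ (recall from Proposition~\ref{prop.2L-2} that $N_j + N_k$, equivalently $2\widetilde N_i$ after translation, is the special member of the pencil attached to $\mQ_i$). The genus of the new fibre $\Phi$ is computed by adjunction as before, now giving $g(\Phi) = 3$ because $\Phi = \tfrac12 \beta^* D$ halves the relevant intersection numbers. Finally, for the canonical system I would show $K_S = Z + \Phi$ as divisor classes: writing $K_S = \beta^*(\sigma^*(L + \mQ_i) - E)$ and using that $\sigma^*(L+\mQ_i) - 2E = \widetilde N_i$ is effective with $\beta^* \widetilde N_i \sim \Phi$ (since $2\widetilde N_i$ is a fibre of $\varphi$), together with $\beta^* E = Z$, the class decomposes as $Z + \Phi$; since $h^0(\oo_B(E)) = 1$ forces $Z$ to be fixed and $|\Phi|$ is base-point-free, we conclude $|K_S| = Z + |\Phi|$ with $R \in |\Phi|$, the ramification $R$ being a single component of $\beta^* D_B$.
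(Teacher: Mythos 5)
Your overall skeleton is the paper's: Proposition~\ref{prop.pencil.D} produces the pencil $|\Phi|$, the double-cover formulas give $K_S=Z+R$, and the dichotomy $Ia$/$Ib$ is read off from the splitting behaviour of $\beta$. However, there is a genuine gap at precisely the step of part $(i)$ where the type $Ia$ hypothesis must do its work. You span $|K_S|$ by the invariant divisor $Z+R$ and the anti-invariant divisor $\beta^*G$, where $G$ is the unique member of $|\sigma^*(L+Q^{1/2})-E|$, and you claim $Z\not\subseteq\beta^*G$ ``because $\mL_B$ restricted to $E$ is non-trivial in type $Ia$, via Corollary~\ref{cor.restriction}''. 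This justification fails twice: Corollary~\ref{cor.restriction} is about restriction to a curve $D$ of the pencil $|\sigma^*(2L)-4E|$, not to $E$; and $\mL_B|_E\cong\oo_E(2)$ is non-trivial for \emph{both} types, so if your reason were valid it would equally show that $|K_S|$ has no fixed part in type $Ib$, contradicting part $(ii)$. What actually separates the two cases is Proposition~\ref{prop.Q.nodal}: $h^0(A,\,\mL\otimes\mQ^{1/2}\otimes\mathcal{I}_o^2)=0$ in type $Ia$ and $=1$ in type $Ib$. Hence in type $Ia$ the unique curve $M\in|\mL\otimes\mQ^{1/2}|$ through $o$ is smooth at $o$, $G$ is its strict transform and $E\not\subseteq G$; in type $Ib$ that curve is $N_i$, nodal at $o$, so $G=\widetilde{N}_i+E\supseteq E$ and $Z$ really is fixed. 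You also never exclude $R$ as a common component of your two spanning divisors (this needs the numerical remark that $G-D_B$ cannot be effective). The paper's route avoids both issues: $K_S=Z+R$, $Z^2=-2$, and $h^0(S,\oo_S(R))=1+h^0(A,\,\mL\otimes\mQ^{1/2}\otimes\mathcal{I}_o^2)=1$ in type $Ia$, so neither summand moves and the pencil $|K_S|$ has no fixed part.

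A secondary inaccuracy occurs in part $(ii)$, at the identification of the second branch point of $b$. You invoke Proposition~\ref{prop.2L-2}, but $N_j+N_k$ is the unique member of $|\mL^2\otimes\mQ_i\otimes\mathcal{I}_o^4|$ (the type $II$ branch curve, lying in a different linear system); it is not ``$2\widetilde{N}_i$ after translation'', since $2N_i\in|2L|$ while $N_j+N_k\in|2L+Q_i|$. Moreover the pencil $|D|$ contains three non-reduced members $2\widetilde{N}_1,2\widetilde{N}_2,2\widetilde{N}_3$, and one must explain why $b$ ramifies over $2\widetilde{N}_i$ rather than over the other two: the point is that $\mL_B=\oo_B(\widetilde{N}_i)$ restricts to $\oo_{\widetilde{N}_j}$ and $\oo_{\widetilde{N}_k}$ (these curves being disjoint from $\widetilde{N}_i$), so $\beta$ splits there, whereas $\mL_B|_{\widetilde{N}_i}=\oo_{\widetilde{N}_i}(\widetilde{N}_i)$ is the normal bundle of a half-fibre, of order exactly $2$, so $\beta^*\widetilde{N}_i$ is connected; together with the ramification over the point of $D_B$ and Riemann--Hurwitz for $b\colon\PP^1\to\PP^1$, this pins down the two branch points. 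With these two repairs your argument closes and coincides in substance with the paper's proof.
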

\begin{proof}
$(i)$ The fact that $\Phi$ is a smooth curve of genus $5$ follows
from Proposition \ref{prop.pencil.D}; moreover $\Phi
Z=(\beta^*D)(\beta^* E)=2DE=8$. We have $2R=\beta^*D_B \in |\Phi|$
and by Hurwitz formula $K_S=\beta^* K_B + R=Z+R$. Since $\dim
|K_S|=1$ and neither $Z$ nor $R$ move in a pencil, we deduce that
$|K_S|$ has no fixed part.

$(ii)$ If $S$ is of type $Ib$, then by Proposition
\ref{prop.pencil.D} the pull-back via $\beta$ of a general element
 of $|D|$ is the disjoint union of two smooth curves of genus $3$.
 So there exists a base-point free genus $3$ pencil $|\Phi|$ on $S$
and we obtain diagram \eqref{dia.beta.mod}.  In this case
$\mL_B=\oo_B(\widetilde{N}_i)$ is effective and it is no difficult
to see that $b \colon \mathbb{P}^1 \to \mathbb{P}^1$ is branched
only at the two points corresponding to $D_B$ and $N_i$. Moreover
$R=\beta^* \widetilde{N}_i \in |\Phi|$, so we can write
\begin{equation*}
|K_S|=|\beta^*K_B + R| = Z+|\Phi|,
\end{equation*}
that is the fixed part of the canonical pencil of $S$ is $|Z|$ and
its movable part is $|\Phi|$.
\end{proof}

Let us consider now surfaces of type $II$. Then $\mathcal{Q}=\mQ_i$
for some $i \in \{1, \, 2,\, 3\}$, so $\mQ^{1/2}$ is a degree $0$
line bundle whose order is exactly $4$ and the curve $D_B$ consists
of two distinct half-fibres of $|D|$, namely $D_B=\widetilde{N}_j +
\widetilde{N}_k$. Therefore $S$ is of type $II$ if and only
if $D_B$ is \emph{disconnected}. Proposition \ref{prop.pencil.D}
implies that the pullback via $\beta$ of a general curve in $|D|$ is
irreducible, so we obtain

\begin{proposition} \label{prop.type II}
If $S$ is a surface of type $II$, then the pullback via $\beta
\colon S \to B$ of the pencil $|D|$ on $B$ is a base-point free
pencil $|\Phi|$ on $S$, whose general element $\Phi$ is a smooth
curve of genus $5$ satisfying $\Phi Z=8$. Moreover, the canonical
system $|K_S|$ has no fixed part, hence the general canonical
curve of $S$ is irreducible. Finally, $R=R_1+R_2$ with $4R_1, \,
4R_2 \in |\Phi|$.
\end{proposition}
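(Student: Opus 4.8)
The plan is to follow closely the argument of Proposition \ref{prop.type I}$(i)$, the only genuinely new feature being the splitting of the ramification divisor forced by the fact that, for a surface of type $II$, the branch curve $D_B=\widetilde{N}_j+\widetilde{N}_k$ is disconnected. First I would record that $|D|=|\sigma^*(2L)-4E|$ is a base-point free pencil on $B$ and that $\beta\colon S\to B$ is finite, so that $|\Phi|:=|\beta^*D|$ is a base-point free pencil on $S$; since $S$ is of type $II$, hence not of type $Ib$, Proposition \ref{prop.pencil.D} gives that the general $\Phi=\beta^*D$ is irreducible. To get its genus and its intersection with $Z$, I would note that $\beta|_{\Phi}\colon \Phi\to D$ is a double cover branched along $D\cap D_B$; since $D_B$ is numerically equivalent to $D$ and $D^2=(\sigma^*(2L)-4E)^2=0$, this cover is étale. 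As $D$ is smooth of genus $3$, Hurwitz's formula yields $g(\Phi)=5$, while $\Phi Z=(\beta^*D)(\beta^*E)=2\,(DE)=8$, because $DE=(\sigma^*(2L)-4E)E=4$.

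Next I would analyse the ramification. Writing $D_B=\widetilde{N}_j+\widetilde{N}_k$, the cover $\beta$ is ramified over each of the two disjoint smooth curves $\widetilde{N}_j,\widetilde{N}_k$, so the ramification divisor splits as $R=R_1+R_2$ with $\beta^*\widetilde{N}_j=2R_1$ and $\beta^*\widetilde{N}_k=2R_2$. Since $\mQ_j$ is $2$-torsion we have $2\widetilde{N}_j\in|\sigma^*(2L+2Q_j)-4E|=|\sigma^*(2L)-4E|=|D|$, and likewise for $\widetilde{N}_k$; pulling back by $\beta$ gives $4R_1=\beta^*(2\widetilde{N}_j)\in|\Phi|$ and $4R_2=\beta^*(2\widetilde{N}_k)\in|\Phi|$, which is the last assertion of the statement.

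Finally I would treat the canonical system. By Hurwitz $K_S=\beta^*K_B+R=Z+R_1+R_2$, where $Z=\beta^*E$ and $K_B=E$. To prove that $|K_S|$ has no fixed part it suffices, since $\dim|K_S|=1$, to exhibit a second member sharing no component with $Z+R_1+R_2$. This comes from the decomposition $\beta_*\omega_S=\omega_B\oplus(\omega_B\otimes\mL_B)$: the anti-invariant summand $H^0(B,\,\omega_B\otimes\mL_B)=H^0(A,\,\mL\otimes\mQ^{1/2}\otimes\mathcal{I}_o)$ is one-dimensional (as $\mQ^{1/2}$ is non-trivial, cf. Proposition \ref{prop.invariants}) and produces the member $\beta^*\widetilde{M}\in|K_S|$, where $\widetilde{M}\in|\sigma^*(L+Q^{1/2})-E|$ is the strict transform of the unique curve of $|L+Q^{1/2}|$ through $o$. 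I would then check that $\beta^*\widetilde{M}$ contains none of $Z,R_1,R_2$: it does not contain $Z=\beta^*E$ because $\mQ^{1/2}\notin\textrm{im}\,\phi_2^{\,\times}$ forces $M$ to be smooth at $o$, so $\widetilde{M}\not\supseteq E$; and it does not contain $R_1$ or $R_2$ because $\widetilde{N}_j,\widetilde{N}_k$ are not components of $\widetilde{M}$, since $\widetilde{M}-\widetilde{N}_j$ would lie in $|\sigma^*(\mQ^{1/2}\otimes\mQ_j^{-1})+E|$, empty as $h^0(A,\,\mQ^{1/2}\otimes\mQ_j^{-1})=0$ for the non-trivial degree-$0$ bundle $\mQ^{1/2}\otimes\mQ_j^{-1}$. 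Hence $|K_S|$ has no fixed part. To deduce that the general canonical curve is irreducible I would observe that $|K_S|$ cannot be composed with a pencil, for otherwise its general member would be a sum of at least two fibres of a fibration, giving $K_S^2=0$ and contradicting $K_S^2=6$; Bertini's irreducibility theorem then finishes the proof. The main obstacle is precisely this fixed-part analysis: one must identify correctly the anti-invariant canonical divisor $\beta^*\widetilde{M}$ and rule out \emph{each} of the three candidate fixed components $Z,R_1,R_2$, which is where Proposition \ref{prop.Q.nodal} and Corollary \ref{cor.restriction} of Section \ref{sec.prel} are needed.
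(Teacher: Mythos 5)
Your proposal is correct, but on the central point — that $|K_S|$ has no fixed part — you take a genuinely different route from the paper. The paper handles the first two assertions by invoking verbatim the argument of Proposition \ref{prop.type I}$(i)$: there $K_S=Z+R$, $\dim|K_S|=1$, and the fixed part is excluded by observing that no candidate complementary divisor moves in a pencil. You instead produce the second generator of the canonical pencil explicitly: the splitting $\beta_*\omega_S=\omega_B\oplus(\omega_B\otimes\mL_B)$ shows that $|K_S|$ is spanned by $Z+R_1+R_2$ and by $\beta^*\widetilde{M}$ with $\widetilde{M}\in|\sigma^*(L+Q^{1/2})-E|$, and you check via Proposition \ref{prop.Q.nodal} (using that $\mQ^{1/2}$ has order $4$, so $\mQ^{1/2}\notin\textrm{im}\,\phi_2^{\times}$ and $\mQ^{1/2}\otimes\mQ_j^{-1}\neq\oo_A$) that these two members share no component; the fixed part, being their greatest common divisor, vanishes. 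This is more work but also more complete: in type $II$ the ramification divisor splits as $R_1+R_2$, so the paper's ``nothing moves'' argument really requires ruling out more sub-divisors of $Z+R_1+R_2$ than in type $I$, a case analysis the paper leaves implicit and your argument bypasses. Your genus-$5$ computation (étale double cover of a genus-$3$ curve, since $D\cdot D_B=0$ and a general $D$ shares no component with $D_B$) and your treatment of $R=R_1+R_2$ with $4R_1,\,4R_2\in|\Phi|$ coincide with the paper's.

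One step needs a small repair: the claim that if $|K_S|$ were composed with a pencil then $K_S^2=0$. Since $K_S^2=6>0$, the canonical pencil necessarily has base points, so the fibration whose fibres you sum exists only on a blow-up of $S$, and the images of its fibres in $S$ may have positive self-intersection (compare a pencil of conics in $\mathbb{P}^2$ whose members split as pairs of lines through a base point). What one actually gets is $K_S$ algebraically equivalent to $mF$ with $m\geq 2$ and $F^2\geq 0$; then $K_S^2=m^2F^2$, and $m^2F^2=6$ has no solution with $m\geq 2$, so the contradiction — and hence the irreducibility of the general canonical curve — still follows. (The paper itself asserts the implication ``no fixed part $\Rightarrow$ general canonical curve irreducible'' without proof, so your attempt to justify it goes beyond the text; it just needs this one-line correction.)
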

\begin{proof}
The first two parts of the statement follow from Proposition
\ref{prop.pencil.D} by the same argument used in the proof of
Proposition \ref{prop.type I}, part $(i)$. It remains only to
prove the assertion about $R$. Let $R_1$, $R_2$ be the two
effective curves in $S$ such that $\beta^* \widetilde{N}_j=2R_1$,
$\beta^* \widetilde{N}_k=2R_2$; then $R=R_1+R_2$. Moreover,  since
$\widetilde{N}_j$ and $\widetilde{N}_k$ are both half-fibres of
$|D|$, it follows $4R_1$, $4R_2 \in |\Phi|$ and we are done.
\end{proof}

\begin{rmk} \label{rem.K ample}
The general surface of type $I$ has ample canonical divisor. In addition, all surfaces of type $II$ have ample canonical divisor.
\end{rmk}

\section{The moduli space}

Let $S$ be a minimal surface of general type with $p_g=q=2$,
$K_S^2=6$ and Albanese map of degree $2$; for a general choice of
$S$ we may assume that $K_S$ is ample, see Remark \ref{rem.K ample}.
The following result can be found in \cite[Section 5]{Ca11}.
\begin{proposition} \label{prop.degree.alb} Let $S$ be a minimal surface of
general type with $q(S) \geq 2$
 and Albanese map $\alpha \colon S \to A$, and assume that $\alpha(S)$
is a surface. Then this is a topological property. If in addition
$q(S)=2$, then the degree of the $\alpha$ is a topological
invariant.
\end{proposition}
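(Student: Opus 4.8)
The plan is to detect both the Albanese dimension and the degree purely from the cup-product structure of $H^*(S,\mathbb{Z})$, using that the Albanese map induces an isomorphism on $H^1$. Write $\alpha\colon S\to A=\mathrm{Alb}(S)$ and recall that, by the universal property of the Albanese torus, $\alpha_*\colon H_1(S,\mathbb{Z})\to H_1(A,\mathbb{Z})$ is the quotient by torsion; dualizing, $\alpha^*\colon H^1(A,\mathbb{Z})\to H^1(S,\mathbb{Z})$ is an isomorphism. Since $A$ is a complex torus, its cohomology ring is the exterior algebra $H^*(A,\mathbb{Z})=\bigwedge^* H^1(A,\mathbb{Z})$, so for every $k$ the cup-product map $\bigwedge^k H^1(A,\mathbb{Z})\to H^k(A,\mathbb{Z})$ is an isomorphism. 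Combining this with functoriality of cup product, the $k$-fold cup product on $S$,
\begin{equation*}
\nu_k\colon \bigwedge^k H^1(S,\mathbb{Z})\to H^k(S,\mathbb{Z}),
\end{equation*}
is identified, through the isomorphisms $\bigwedge^k H^1(S)\cong \bigwedge^k H^1(A)\cong H^k(A)$, with the pullback $\alpha^*\colon H^k(A,\mathbb{Z})\to H^k(S,\mathbb{Z})$. The point is that $\nu_k$ depends only on the cohomology ring of $S$, hence is a topological invariant.

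For the first assertion I would show that $\alpha(S)$ is a surface if and only if $\nu_4\neq 0$. Let $Y=\alpha(S)$ and factor $\alpha$ as $S\twoheadrightarrow Y\hookrightarrow A$. If $\dim Y=1$ then $Y$ is a real surface, $H^4(Y,\mathbb{Q})=0$, and $\alpha^*$ factors through $H^4(Y)$, forcing $\nu_4=0$. Conversely, if $\dim Y=2$ then $[Y]$ is a nonzero class in $H_4(A,\mathbb{Q})$ (it is an effective algebraic cycle), so some $\omega\in H^4(A,\mathbb{Q})$ satisfies $\int_Y\omega\neq 0$; the projection formula then gives $\int_S\alpha^*\omega=(\deg\alpha)\int_Y\omega\neq 0$, whence $\nu_4\neq 0$. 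Since for $q\geq 2$ the image is automatically positive-dimensional, this dichotomy is exhaustive and shows that being of Albanese general type is read off from $\nu_4$, a topological condition.

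For the second assertion, assume $q=2$, so $\dim A=2$ and $H^1(S,\mathbb{Z})$ has rank $4$; both $\bigwedge^4 H^1(S,\mathbb{Z})$ and $H^4(S,\mathbb{Z})$ are then free of rank $1$, and by the previous step $\alpha$ is surjective with $Y=A$. I would invoke the projection formula once more: $\alpha_*\alpha^*=\deg\alpha$ on $H^*(A)$ and $\alpha_*\colon H^4(S,\mathbb{Z})\to H^4(A,\mathbb{Z})$ is the isomorphism of top classes, so $\alpha^*\colon H^4(A,\mathbb{Z})\to H^4(S,\mathbb{Z})$ is multiplication by $\deg\alpha$. In the commutative square expressing functoriality of cup product, the isomorphism $\alpha^*$ on $\bigwedge^4 H^1$ and the cup-product isomorphism on $A$ both carry generator to generator, so $\nu_4$ itself is multiplication by $\pm\deg\alpha$ for suitable generators. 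Hence the order of $\mathrm{coker}\,\nu_4$ equals $\deg\alpha$, which exhibits the degree as a topological invariant.

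The main technical care goes into fixing signs and integral normalizations so that one recovers the exact value $\deg\alpha$ and not merely its absolute value or the bare non-vanishing: the decisive input is that $S$ and $A$ are canonically oriented as complex manifolds and that $\alpha$, being holomorphic, is orientation preserving, which forces the multiplication factor to be $+\deg\alpha>0$. The only other point requiring attention is the claim $[Y]\neq 0$ in the first part, which rests on the positivity of algebraic cycles, hence is special to the projective setting rather than a purely topological fact; everything else is formal functoriality of the cup product together with the projection formula.
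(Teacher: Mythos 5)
Your proof is correct and takes essentially the same route as the paper: both identify the four-fold cup product $\bigwedge^4 H^1(S,\mathbb{Z}) \to H^4(S,\mathbb{Z})$ with $\alpha^*$ via the exterior-algebra structure of $H^*(\mathrm{Alb}(S),\mathbb{Z})$, detect that $\alpha(S)$ is a surface from its non-vanishing, and read off the degree as the index of its image in $H^4(S,\mathbb{Z})$. The only difference is that the paper delegates these facts to \cite{Ca91}, whereas you verify them directly with the projection formula.
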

\begin{proof}
By \cite{Ca91} the Albanese map $\alpha$ induces a homomorphism of
cohomology algebras
\begin{equation*}\alpha^*\colon H^*(\textrm{Alb}(S), \, \mathbb{Z}) \longrightarrow H^*(S, \, \mathbb{Z})
\end{equation*}
and $H^*(\textrm{Alb}(S), \mathbb{Z})$ is isomorphic to the full
exterior algebra $\bigwedge^*  H^1(\textrm{Alb}(S), \,
\mathbb{Z}))\cong\bigwedge^*  H^1(S, \, \mathbb{Z})$. In particular,
if $q=2$ the degree of the Albanese map equals the index of the
image of $\bigwedge^4  H^1(S, \, \mathbb{Z})$ inside $H^4(S, \,
\mathbb{Z})$ and it is therefore a topological invariant.
\end{proof}

By Proposition \ref{prop.degree.alb} it follows that one may study
the deformations of $S$ by relating them to those of the flat double
cover $\beta \colon S \to B$. By \cite[p. 162]{Se06} we have an
exact sequence
\begin{equation} \label{suc.def.S}
0 \longrightarrow T_S \lr \beta^{*}T_B \lr \mathcal{N}_{\beta} \lr
0,
\end{equation}
where $\mathcal{N}_{\beta}$ is a coherent sheaf supported on $R$
called the \emph{normal sheaf of} $\beta$.

\begin{proposition} \label{prop.N-beta}
Assume that $K_S$ is ample. If $S$ is a surface of type $I$, then
$\mathcal{N}_{\beta} = \oo_R$. If $S$ is a surface of type $II$,
then $\mathcal{N}_{\beta}$ is a non-trivial, $2$-torsion element
of $\emph{Pic}^0(R)$.
\end{proposition}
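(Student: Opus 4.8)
The plan is to analyze the normal sheaf $\mathcal{N}_{\beta}$ directly from its definition as a sheaf supported on the ramification divisor $R$. The key structural fact about normal sheaves of double covers is that $\mathcal{N}_{\beta}$ is a line bundle on $R$ that can be computed in terms of the branch data. Specifically, for a flat double cover $\beta \colon S \to B$ branched along a smooth divisor $D_B$ and defined by a line bundle $\mathcal{L}_B$ with $\mathcal{L}_B^2 = \mathcal{O}_B(D_B)$, the conormal sequence and the local structure of the cover identify $\mathcal{N}_{\beta}$ with the restriction to $R$ of a line bundle measuring the first-order behavior of the branch divisor. Concretely, I expect $\mathcal{N}_{\beta} \cong \beta^*\mathcal{O}_{D_B}(D_B) \otimes (\text{something})$, and since $\beta$ restricts to $R$ as an isomorphism onto $D_B$, one reduces everything to a line bundle on $D_B$ (equivalently on $R$).

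First I would make the identification of $\mathcal{N}_{\beta}$ precise using \cite[p. 162]{Se06}: from the sequence \eqref{suc.def.S}, the normal sheaf is $\mathcal{N}_{\beta} = \mathcal{O}_R(R) \otimes \beta^*(\text{conormal data})$, and I expect it to be expressible as a torsion line bundle on $R$ whose class is governed by $\mathcal{O}_{D_B}(D_B)$ pulled back. The cleanest route is to observe that $R \cong D_B$ via $\beta$, so determining $\mathcal{N}_{\beta}$ amounts to evaluating a specific degree-$0$ line bundle on the curve(s) underlying $D_B$. Recall the key difference between the two types established earlier: for type $I$, by Proposition \ref{prop.pencil.D} the branch divisor $D_B$ is a connected smooth curve of genus $3$ (and $R \in |\Phi|$ by Proposition \ref{prop.type I}$(ii)$ when $Ib$, or $2R \in |\Phi|$ when $Ia$), whereas for type $II$, by Proposition \ref{prop.type II} we have $R = R_1 + R_2$ with $D_B = \widetilde{N}_j + \widetilde{N}_k$ disconnected.

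For the type $I$ case, I would argue that the relevant line bundle on $R \cong D_B$ is trivial. The natural candidate is to compute $\mathcal{N}_{\beta}$ as the restriction to $D_B$ of an appropriate line bundle on $B$ and show this restriction is $\mathcal{O}_{D_B}$; since $D_B$ moves in the base-point-free pencil $|\sigma^*(2L)-4E|$ and the branch data $\mathcal{L}_B = \mathcal{O}_B(\sigma^*(L+Q^{1/2})-2E)$ restricts trivially to $D_B$ exactly when $\mathcal{Q}^{1/2}$ lies in the right coset (controlled by Corollary \ref{cor.restriction}), the computation should collapse to $\mathcal{N}_{\beta} = \mathcal{O}_R$. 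For type $II$, the same computation restricted to $R = R_1 + R_2$ should instead yield a nonzero $2$-torsion class: here $\mathcal{N}_{\beta}$ restricts to each $R_i$ as a line bundle, and because $4R_i \in |\Phi|$ (so $R_i$ is a quarter-fibre, not a half-fibre in the pullback sense), the gluing across the two components $R_1, R_2$ produces a genuinely nontrivial element of $\mathrm{Pic}^0(R)$ of order exactly $2$.

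The main obstacle I anticipate is the precise identification of $\mathcal{N}_{\beta}$ as a concrete line bundle and the verification of its triviality versus $2$-torsion. In particular, showing that the class is $2$-torsion in the type $II$ case requires pinning down how $\mathcal{N}_{\beta}$ is assembled from the two components $R_1$ and $R_2$ of $R$ — the subtlety is that $R$ itself is reducible (its two components each elliptic, coming from $\widetilde{N}_j, \widetilde{N}_k$), so $\mathrm{Pic}^0(R)$ has an extra factor compared to the irreducible type $I$ situation, and one must exhibit a nonzero but square-zero element there. I would handle this by relating $\mathcal{N}_{\beta}$ to the difference of the two branch components and invoking the $2$-torsion structure already visible in $\mathcal{Q}^{1/2}$ having order $4$ (so that $\mathcal{Q} = (\mathcal{Q}^{1/2})^2$ is the nontrivial $2$-torsion $\mathcal{Q}_i$), which is exactly what should force the resulting class on $R$ to be nontrivial of order $2$ rather than trivial.
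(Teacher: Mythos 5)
Your skeleton matches the paper's proof: one first pins down $\mathcal{N}_{\beta}$ as a concrete line bundle on $R$ — the paper cites \cite[Lemma 3.2]{Rol10} to get $\mathcal{N}_{\beta}=(N_{R/S})^{\otimes 2}=\oo_R(2R)$, valid because $K_S$ ample forces $R$ to be smooth; under the isomorphism $\beta|_R\colon R \to D_B$ this is exactly your $\beta^*\oo_{D_B}(D_B)$ with the unspecified ``something'' equal to $\oo_R$ — and then evaluates it using the position of $R$ relative to the pencil $|\Phi|$ (Propositions \ref{prop.type I} and \ref{prop.type II}). However, both of your evaluation steps are problematic. For type $I$, triviality follows from a single fact: $D_B$ is a \emph{full fibre} of the base-point-free pencil $|D|$ (equivalently $2R\in|\Phi|$ in case $Ia$, $R\in|\Phi|$ in case $Ib$), hence $\oo_{D_B}(D_B)=\oo_{D_B}$. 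Your appeal to Corollary \ref{cor.restriction} concerns the wrong bundle: that corollary controls $\mathcal{L}_B|_{D_B}$ (the square root), which by Proposition \ref{prop.pencil.D} is trivial precisely in case $Ib$ and \emph{non-trivial} in case $Ia$, where $\mathcal{Q}^{1/2}\notin\mathrm{im}\,\phi_2$. An argument routed through that corollary would therefore wrongly output $\mathcal{N}_{\beta}\neq\oo_R$ for type $Ia$; what $\mathcal{N}_{\beta}$ sees is $\mathcal{L}_B^2|_{D_B}=\oo_{D_B}(D_B)$, not $\mathcal{L}_B|_{D_B}$.

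The genuine gap is the non-triviality claim in type $II$. Your proposed mechanism — ``gluing across the two components'', ``the difference of the two branch components'' — cannot produce anything, because there is nothing to glue: all four intersection points of $N_j$ and $N_k$ are concentrated at the ordinary quadruple point $o$, so after blowing up, $\widetilde{N}_j$ and $\widetilde{N}_k$, hence $R_1$ and $R_2$, are \emph{disjoint}, and $\mathrm{Pic}^0(R)=\mathrm{Pic}^0(R_1)\times\mathrm{Pic}^0(R_2)$ carries no gluing or extension data. (Incidentally, $R_1$ and $R_2$ have genus $2$, not $1$: each $\widetilde{N}$ is the strict transform of a nodal curve of arithmetic genus $3$.) The non-triviality is a per-component statement, and the actual mechanism is the multiple-fibre structure: $4R_i\in|\Phi|$ with $R_i$ reduced and irreducible means $4R_i$ is a multiple fibre of multiplicity $4$ of the fibration $S\to\mathbb{P}^1$ with reduction $R_i$, and the standard fact that the normal bundle of the reduction of a multiple fibre $mF_0$ is torsion of order exactly $m$ in $\mathrm{Pic}(F_0)$ (see e.g.\ \cite{BHPV03}) gives that $\oo_{R_i}(R_i)$ has order exactly $4$, hence $\mathcal{N}_{\beta}|_{R_i}=\oo_{R_i}(2R_i)$ has order exactly $2$; equivalently, $\mathcal{N}_{\beta}|_{R_i}\cong\oo_{\widetilde{N}}(\widetilde{N})$ for the corresponding half-fibre $\widetilde{N}$ of $\varphi\colon B\to\mathbb{P}^1$. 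Your remark that $\mathcal{Q}^{1/2}$ has order $4$ gestures toward this but is not an argument; as written, the decisive step of the proposition is missing.
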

\begin{proof}
Since $K_S$ is ample, $R$ is smooth and we have an isomorphism
\begin{equation*}
\mathcal{N}_{\beta} = (N_{R/S})^{\otimes 2}=\oo_R(2R),
\end{equation*}
see \cite[Lemma 3.2]{Rol10}. If $S$ is of type $I$ then either $R
\in |\Phi|$ or $2R \in |\Phi|$, see Proposition \ref{prop.type I},
so $\mathcal{N}_{\beta}$ is trivial. If $S$ is of type $II$ then $4R
\in |\Phi|$, see Proposition \ref{prop.type II}, so
$\mathcal{N}_{\beta}$ is a non-trivial, $2$-torsion line bundle.
\end{proof}

\begin{proposition} \label{prop.coh.TB}
Assume that $K_S$ is ample. Then the sheaf $\beta^*T_B$ satisfies
\begin{equation*}
h^0(S, \,\beta^*T_B)=0, \quad h^1(S, \,\beta^*T_B)=4, \quad h^2(S,
\,\beta^*T_B)=4.
\end{equation*}
\end{proposition}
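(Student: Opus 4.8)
The plan is to compute the cohomology of $\beta^*T_B$ by relating it to cohomology on $B$ via the finite flat double cover $\beta$. Since $\beta$ is finite, $H^i(S,\,\beta^*T_B)=H^i(B,\,T_B\otimes\beta_*\oo_S)$, and the standard structure theory of double covers (see \cite[p.~237]{BHPV03}) gives the splitting $\beta_*\oo_S=\oo_B\oplus\mL_B^{-1}$, where $\mL_B=\oo_B(\sigma^*(L+Q^{1/2})-2E)$ is the line bundle defining the cover. Hence
\begin{equation*}
h^i(S,\,\beta^*T_B)=h^i(B,\,T_B)+h^i(B,\,T_B\otimes\mL_B^{-1}),
\end{equation*}
and the computation reduces to two cohomology calculations on the blow-up $B=\mathrm{Bl}_o A$.

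First I would handle $H^i(B,\,T_B)$. Since $B$ is the blow-up of the abelian surface $A$ at a single point, its deformations and those of $A$ are tightly linked: one expects $H^0(B,\,T_B)=0$ (the blow-up rigidifies the translations of $A$, killing the global vector fields), while $H^1(B,\,T_B)$ should record the $4$-dimensional space of deformations of the polarized abelian surface $A$ together with the marked point $o$. Concretely, I would use the blow-up sequence relating $T_B$ to $\sigma^*T_A$ and the exceptional divisor $E$, namely the exact sequence
\begin{equation*}
0\lr T_B\lr \sigma^*T_A\lr \oo_E(-E)^{\oplus?}\lr 0
\end{equation*}
(more precisely the sequence computing the difference between $T_B$ and $\sigma^*T_A\cong\oo_B^{\oplus 2}$ along $E$), push down via $\sigma$, and use $R^i\sigma_*\oo_B$ together with $H^i(A,\,T_A)=H^i(A,\,\oo_A)^{\oplus 2}$, whose dimensions are $2,4,2$ for $i=0,1,2$. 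Tracking the contribution of the exceptional curve should yield $h^0=0$, $h^1=4$, $h^2=2$ for $T_B$.

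Next comes the term $H^i(B,\,T_B\otimes\mL_B^{-1})$, which I expect to be the main obstacle, since $\mL_B^{-1}$ is a genuinely nontrivial twist involving both the pullback of the $(1,2)$-polarization and the exceptional divisor. The natural strategy is to push everything down to $A$ and identify $\sigma_*(T_B\otimes\mL_B^{-1})$ (and its higher direct images) with sheaves built from $T_A\cong\oo_A^{\oplus 2}$ twisted by $\mL^{-1}\otimes\mQ^{-1/2}$ and the ideal powers $\mathcal{I}_o$ coming from the $-2E$ contribution. This should reduce matters to cohomology groups of the shape $H^i(A,\,\mL^{-1}\otimes\mQ^{-1/2}\otimes\mathcal{I}_o^k)$; by Serre duality and the fact that $\mL\otimes\mQ^{1/2}$ is effective with the prescribed vanishing (controlled by the earlier analysis of $\phi_2$ in Section \ref{sec.prel}), these can be evaluated. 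Summing the two contributions I would aim to reach the asserted total $h^0=0$, $h^1=4$, $h^2=4$.

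The delicate bookkeeping is in the twisted term: one must correctly compute $R^i\sigma_*$ of $T_B\otimes\mL_B^{-1}$ near the exceptional divisor, since the factor $\oo_B(-2E)$ produces a nonzero $R^1\sigma_*$ contribution supported at $o$ that feeds into the Leray spectral sequence. I would treat this carefully via the local model of the blow-up, checking that the spectral sequence degenerates (or that the relevant differentials vanish by dimension or symmetry), so that the Euler characteristic computed by Hirzebruch--Riemann--Roch on $S$ matches the claimed individual $h^i$. A useful sanity check is that $\chi(S,\,\beta^*T_B)=0-4+4=0$, which I would verify independently by Riemann--Roch, giving confidence that the $h^1$ and $h^2$ are correctly balanced.
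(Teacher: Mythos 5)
Your skeleton matches the paper's proof: both use the projection formula for the finite double cover to write $h^i(S,\beta^*T_B)=h^i(B,T_B)+h^i(B,T_B\otimes\mL_B^{-1})$, and both compute the untwisted term from the blow-up sequence $0\to T_B\to\sigma^*T_A\to\oo_E(-E)\to 0$, getting $(0,4,2)$; your numbers there are right (and the cokernel is a single copy of $\oo_E(-E)$, so the ``$\oplus?$'' resolves to no direct sum). The gap is in the twisted term, which is where all the content of the proposition lies, and your proposal does not deliver it. The paper's key observation is elementary: tensoring the same blow-up sequence with $\mL_B^{-1}$ turns the cokernel into $\oo_E(-E)\otimes\mL_B^{-1}\cong\oo_E(E)\cong\oo_{\mathbb{P}^1}(-1)$, which has \emph{no} cohomology at all, so immediately $h^i(B,T_B\otimes\mL_B^{-1})=2\,h^i(B,\mL_B^{-1})$. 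Then $h^0(B,\mL_B^{-1})=0$ since $2L_B$ is effective, $h^2(B,\mL_B^{-1})=h^0(B,\mL_B\otimes\oo_B(E))=h^0(A,\mL\otimes\mQ^{1/2}\otimes\mathcal{I}_o)=1$ by Serre duality, and $h^1=0$ by Riemann--Roch; this is precisely where the hypothesis $\mQ^{1/2}\neq\oo_A$ enters (otherwise $h^0(A,\mL\otimes\mathcal{I}_o)=2$ and the count changes), a point your sketch never localizes.

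By contrast, your plan of pushing $T_B\otimes\mL_B^{-1}$ down to $A$ runs into exactly the difficulty you flag and then defer. Since $\mL_B^{-1}=\sigma^*(\mL\otimes\mQ^{1/2})^{-1}\otimes\oo_B(2E)$, one has $\sigma_*\mL_B^{-1}=(\mL\otimes\mQ^{1/2})^{-1}$ with \emph{no} ideal-sheaf factor (ideal powers $\mathcal{I}_o^k$ arise from pushing down \emph{negative} multiples of $E$, i.e.\ only after dualizing), while $R^1\sigma_*\mL_B^{-1}$ is a length-one skyscraper at $o$ coming from $H^1(\mathbb{P}^1,\oo(-2))$; the Leray spectral sequence then has a possibly nonzero differential $d_2\colon E_2^{0,1}\to E_2^{2,0}$, and ``degeneration by dimension or symmetry'' is not established. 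So as written your reduction to groups $H^i(A,\mL^{-1}\otimes\mQ^{-1/2}\otimes\mathcal{I}_o^k)$ is incorrect, and the unresolved differential means the argument does not close; the Euler-characteristic check $\chi=0$ is consistent but cannot separate $h^1$ from $h^2$, which is the whole point. The fix is simply to stay on $B$: kill the exceptional term by the twist as above, and compute $h^i(B,\mL_B^{-1})$ directly.
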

\begin{proof}
Since $\beta \colon S \to B$ is a finite map, by using projection
formula and the Leray spectral sequence we deduce
\begin{equation} \label{eq.beta}
h^i(S, \, \beta^*T_B)=h^i(B, \, \beta_* \beta^* T_B)=h^i(B, \,
T_B)+h^i(B, \, T_B \otimes \mathcal{L}_B^{-1}), \quad i=0,\,1,\,2.
\end{equation}
There is a short exact sequence
\begin{equation} \label{eq.blow-up.tangent}
0 \lr T_B \to \sigma^*T_A \to \oo_E(-E) \to 0,
\end{equation}
see \cite[p. 73]{Se06}. Then a straightforward computation yields
\begin{equation} \label{eq.coh.TB}
h^0(B, \, T_B)=0, \quad h^1(B, \, T_B)=4, \quad h^2(B, \, T_B)=2.
\end{equation}
Now let us  tensor \eqref{eq.blow-up.tangent} with
$\mathcal{L}_B^{-1}$. Since $\sigma^*T_A=\oo_B \oplus \oo_B$ and
$\mathcal{L}_B^{-1} \otimes \oo_E(-E)=\oo_E(E)$, by taking
cohomology we obtain \
\begin{equation} \label{eq.TB(-L)}
h^i(B, \, T_B \otimes \mathcal{L}_B^{-1})= 2 \cdot h^i(B, \,
\mathcal{L}_B^{-1}), \quad i=0,\,1,\,2.
\end{equation}
Moreover we have\begin{equation} \label{eq.LB} h^0(B, \,
\mL_B^{-1})=0, \quad h^1(B,\, \mL_B^{-1})=0, \quad h^2(B, \,
\mL_B^{-1})=1,
\end{equation}
where the first equality comes from the fact that $D_A=2L_B$ is an
effective divisor, the third equality follows from Serre duality and
$h^0(B, \, \mL_B \otimes \oo_B(E))=1$, since $\mQ^{1/2} \neq \oo_A$,
and the second one is a consequence of Riemann-Roch.

Therefore the claim follows using \eqref{eq.beta},
\eqref{eq.coh.TB}, \eqref{eq.TB(-L)} and \eqref{eq.LB}.
\end{proof}

We have a commutative diagram
\begin{equation} \label{dia.TS}
\xymatrix{ & 0  \ar[d] & 0
 \ar[d] & 0  \ar[d]  & \\
0 \ar[r] & T_S \ar[r] \ar[d] & \beta^*T_B
\ar[r] \ar[d] & \mathcal{N}_{\beta} \ar[r] \ar[d]  & 0\\
0 \ar[r] & T_S \ar[r]\ar[d] & \alpha^*T_A \ar[r] \ar[d] &
\mathcal{N}_{\alpha}
\ar[r] \ar[d]& 0 \\
 & 0 \ar[r] & \oo_{Z}(-Z) \ar[r] \ar[d] & \oo_{Z}(-Z) \ar[r] \ar[d] & 0
 \\
&  & 0  & 0 &  }
\end{equation}
whose central column is the pullback of \eqref{eq.blow-up.tangent}
via $\beta \colon S \to B$.

\begin{proposition} \label{prop.h1-T} Let $S$ be a minimal
surface with $p_g=q=2$, $K_S^2=6$ and Albanese map of degree $2$,
and assume that $K_S$ is ample. Then
\begin{equation*}
h^1(S, \, T_S)=\left\{%
\begin{array}{ll}
    4 & \hbox{if $S$ is of type $I$} \\
    3 & \hbox{if $S$ is of type $II$.} \\
\end{array}%
\right.
\end{equation*}
\end{proposition}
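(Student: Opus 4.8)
The plan is to compute $h^1(S, \, T_S)$ using the exact sequence \eqref{suc.def.S}, namely
\begin{equation*}
0 \longrightarrow T_S \lr \beta^*T_B \lr \mathcal{N}_{\beta} \lr 0,
\end{equation*}
for which the relevant cohomology of $\beta^*T_B$ is already supplied by Proposition \ref{prop.coh.TB} and the cohomology of $\mathcal{N}_{\beta}$ is controlled by Proposition \ref{prop.N-beta}. First I would write down the long exact sequence in cohomology:
\begin{equation*}
0 \to H^0(T_S) \to H^0(\beta^*T_B) \to H^0(\mathcal{N}_{\beta}) \to H^1(T_S) \to H^1(\beta^*T_B) \to H^1(\mathcal{N}_{\beta}) \to \cdots.
\end{equation*}
By Proposition \ref{prop.coh.TB} we have $H^0(\beta^*T_B)=0$, which forces $H^0(T_S)=0$ and makes the segment $0 \to H^0(\mathcal{N}_{\beta}) \to H^1(T_S) \to H^1(\beta^*T_B) \to H^1(\mathcal{N}_{\beta})$ the crux of the computation. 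Since $h^1(S,\beta^*T_B)=4$, the value of $h^1(S, T_S)$ is governed by $h^0(\mathcal{N}_{\beta})$ together with the rank of the connecting (or coboundary) map $H^1(\beta^*T_B) \to H^1(\mathcal{N}_{\beta})$.

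The two cases are then distinguished purely by the nature of $\mathcal{N}_{\beta}$ as determined in Proposition \ref{prop.N-beta}. In type $I$, $\mathcal{N}_{\beta}=\oo_R$, so $h^0(\mathcal{N}_{\beta})=h^0(R,\oo_R)=1$ since $R$ is smooth and connected (being in $|\Phi|$ or half of $|\Phi|$, a connected genus-$5$ or genus-$3$ curve). In type $II$, $\mathcal{N}_{\beta}$ is a \emph{non-trivial} $2$-torsion line bundle on $R$; here I would need to understand the components of $R=R_1+R_2$ and compute $h^0(\mathcal{N}_{\beta})$, which I expect to vanish because a non-trivial degree-$0$ line bundle has no sections on a connected curve. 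The claimed answers $h^1(S,T_S)=4$ (type $I$) and $3$ (type $II$) are consistent with $h^1(T_S)=h^0(\mathcal{N}_{\beta})+4-(\text{rank of coboundary})$ provided the coboundary map vanishes, giving $1+4-1=4$ and $0+4-1=3$ respectively if $h^0$ differs by one and the $H^1 \to H^1(\mathcal{N}_{\beta})$ contribution is uniform.

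The main obstacle will be pinning down the coboundary map $H^1(S,\beta^*T_B) \to H^1(R,\mathcal{N}_{\beta})$, or equivalently controlling $h^1(\mathcal{N}_{\beta})$ and how much of $H^1(\beta^*T_B)$ survives into $H^1(T_S)$. The clean way to sidestep a direct analysis of this map is to bring in the second row and the left column of diagram \eqref{dia.TS}: the sequence $0 \to T_S \to \alpha^*T_A \to \mathcal{N}_{\alpha} \to 0$ together with $\alpha^*T_A=\oo_S^{\oplus 2}$ gives $h^1(\alpha^*T_A)=2 h^1(\oo_S)=4$ directly, and the bottom row identifies the cokernel terms via $\oo_Z(-Z)$. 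I would play the three short exact sequences in \eqref{dia.TS} against each other, using $h^i(\oo_Z(-Z))$ (computed from $Z$ being an elliptic curve with $Z^2=-2$, so $\deg \oo_Z(-Z)=2$) to transfer the cohomology of $\beta^*T_B$ and $\alpha^*T_A$ onto $\mathcal{N}_{\beta}$ and $\mathcal{N}_{\alpha}$, and thereby read off $h^1(T_S)$ consistently in both rows. The delicate point throughout is verifying that the relevant maps on $H^0$ and $H^1$ have the expected ranks, for which the explicit descriptions of $R$ and $\mathcal{N}_{\beta}$ from Propositions \ref{prop.type I}, \ref{prop.type II} and \ref{prop.N-beta} are exactly the needed input.
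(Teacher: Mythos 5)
Your setup coincides with the paper's: diagram \eqref{dia.TS}, the vanishing $H^0(S,\beta^*T_B)=0$ from Proposition \ref{prop.coh.TB}, and the resulting exact sequence $0 \to H^0(\mathcal{N}_{\beta}) \to H^1(T_S) \to H^1(\beta^*T_B)$, so that everything reduces to the rank of the map $H^1(S,\beta^*T_B) \to H^1(R,\mathcal{N}_{\beta})$. But your proposed way of pinning down that rank --- ``playing the three short exact sequences of \eqref{dia.TS} against each other'' and transferring cohomology through $\oo_Z(-Z)$ --- cannot succeed, and this is the genuine gap. All the Euler characteristics are already consistent: in type $I$ one has $h^1(\mathcal{N}_{\beta})=h^1(\oo_R)=3$ (as $R$ is a smooth connected curve of genus $3$), $h^1(\beta^*T_B)=4$, and any rank from $0$ to $3$ for the coboundary is numerically compatible with the diagram, giving $h^1(T_S)$ anywhere between $5$ and $2$; no amount of diagram chasing singles out the correct value. (Note also an internal slip: your formula $h^1(T_S)=h^0(\mathcal{N}_{\beta})+4-(\text{rank})$ requires the coboundary to have rank exactly $1$, not to vanish; if it vanished you would get $5$ and $4$, contradicting the statement.)

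What is missing is the geometric input the paper uses precisely at this point. Using the central column of \eqref{dia.TS} and $h^0(S,\alpha^*T_A)=h^0(Z,\oo_Z(-Z))=2$, the paper shows $\delta \colon H^1(S,\beta^*T_B) \to H^1(S,\alpha^*T_A)$ is an isomorphism, so the problem becomes computing the image of $\gamma \colon H^1(S,T_S) \to H^1(A,T_A)$; it then proves $\dim(\mathrm{im}\,\gamma)=3$ by deformation theory, not homological algebra. The upper bound comes from deforming a smooth pluricanonical pair $(S,\Gamma)$: since $h^1(S,\omega_S^m)=0$, every first-order deformation of $S$ carries $\Gamma$ along, inducing a deformation of the pair $(A,\Gamma')$ and hence of the polarized surface $(A,\oo_A(\Gamma'))$, and polarized first-order deformations of an abelian surface sweep out only a $3$-dimensional subspace of the $4$-dimensional $H^1(A,T_A)$. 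The lower bound comes from the construction of Section \ref{sec.d=2}, which realizes every non-product $(1,2)$-polarized abelian surface as the Albanese variety of such an $S$. This gives $h^1(T_S)=3+h^0(R,\mathcal{N}_{\beta})$, and only then does Proposition \ref{prop.N-beta} finish the proof. Finally, a smaller inaccuracy in your argument: in type $II$ the curve $R=R_1+R_2$ is \emph{disconnected}, so ``a non-trivial degree-$0$ line bundle on a connected curve has no sections'' is not the right justification for $h^0(\mathcal{N}_{\beta})=0$; one needs the restriction of $\mathcal{N}_{\beta}$ to \emph{each} component $R_i$ to be non-trivial, which follows from the multiple-fibre structure $4R_i \in |\Phi|$ underlying Proposition \ref{prop.N-beta}.
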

\begin{proof}
Proposition \ref{prop.coh.TB} yields $H^0(S, \, \beta^*T_B)=0$,
 so looking at the central column of diagram \eqref{dia.TS} we obtain
 the long exact sequence in cohomology
\begin{equation*}
0  \longrightarrow H^0(S, \, \alpha^*T_A) \lr H^0(Z, \,
\oo_{Z}(-Z))  \lr H^1(S, \, \beta^*T_B)
\stackrel{\delta}{\longrightarrow} H^1(S, \, \alpha^*T_A) \lr 0.
\end{equation*}
Since $h^0(S, \, \alpha^*T_A)= h^0(Z, \, \oo_{Z}(-Z))=2$, it
follows that the map $\delta$ is an isomorphism. Therefore the
commutativity of \eqref{dia.TS} implies that the image of $H^1(S,
\, T_S)$ in $H^1(S, \, \beta^*T_B)$ coincides with the image of
$H^1(S, \, T_S)$ in $H^1(S, \, \alpha^*T_A) \cong H^1(A, \, T_A)$.
So we obtain the exact sequence
\begin{equation} \label{eq.TA}
0 \to H^0(R, \, \mathcal{N}_{\beta}) \lr H^1(S, \, T_S)
\stackrel{\gamma}{\lr} H^1(A, \, T_A).
\end{equation}
We claim that the image of $\gamma$ has dimension $3$. In order to
prove this, we borrow an argument from \cite[Section 6]{PP10}.
Take a positive integer $m \geq 2$ such that there exists a smooth
pluricanonical divisor $\Gamma \in |mK_S|$ and let $\Gamma'$ be
the image of $\Gamma$  in $A$. By \cite[Section 3.4.4 p.
177]{Se06}, the first order deformations of a pair $(X, \, Y)$,
where $X \subset Y$ is a closed subscheme and $Y$ is nonsingular,
are parameterized by the vector space $H^1(Y, \, T_Y \langle X
\rangle)$, where $T_Y \langle X \rangle$ is the sheaf of germs of
tangent vectors to $Y$ which are tangent to $X$. Notice that $T_Y
\langle X \rangle$ is usually denoted by $T_Y( - \log X)$ when $X$
is a normal crossing divisor with smooth components. In our
situation, a first-order deformation of the pair $(\Gamma, \, S)$
 induces a first-order deformation of the pair $(\Gamma', \, A)$,
because the the differential map $d \alpha \colon T_S \to T_A$ sends
vectors tangent to $\Gamma$ into vectors tangent to $\Gamma'$. Hence
we have a commutative diagram
\begin{equation*} \label{dia.proof.2}
\begin{xy}
\xymatrix{
H^1(S, \, T_S \langle  \Gamma \rangle)  \ar[d]_{\epsilon}
\ar[rr]^{\gamma'} & & H^1(A, \, T_A \langle \Gamma' \rangle)
 \ar[d]^{\epsilon'} \\
 H^1(S, \, T_S) \ar[rr]^{\gamma} & & H^1(A, \, T_A).  \\
  }
\end{xy}
\end{equation*}
Let us observe now the following facts.
\begin{itemize}
\item Since $S$ is smooth, the line bundle $\omega_S^m$ extends
along any first-order deformation of $S$, because the relative
dualizing sheaf is locally free for any smooth morphism of
schemes, see \cite[p. 182]{Man08}. Moreover, since $S$ is minimal
of general type, we have $h^1(S, \, \omega_S^m)=0$, so every
section of $\omega_S^m$ extends as well, see \cite[Section
3.3.4]{Se06}. This means that no first-order deformation of $S$
makes $\Gamma$ disappear, in other words $\epsilon$ is surjective.
Therefore $\textrm{im} \,\gamma \subseteq \textrm{im} \,
\epsilon'$. \item Since $(\Gamma')^2 >0$, the line bundle
$\oo_A(\Gamma')$ is ample on $A$; therefore it deforms along a
subspace of $H^1(A, \, T_A)$ of dimension $3$, see \cite[p.
152]{Se06}. Since every first-order deformation of the pair $(A,
\,\Gamma')$ induces a first-order deformation of the pair $(A, \,
\oo_A(\Gamma'))$, it follows that the image of $\epsilon'$ is at
most $3$-dimensional.
\end{itemize}
According to the above remarks, we obtain
\begin{equation*}
\dim \,(\textrm{im} \, \gamma) \leq \dim \,(\textrm{im} \,
\epsilon') \leq 3.
\end{equation*}
On the other hand, given any abelian surface $A$ with a $(1,
\,2)$-polarization, not of product type, by the results of Section
\ref{sec.d=2} we can construct a surface $S$ of type $I$ or $II$
such that $\textrm{Alb}(S)=A$. Then the dimension of $\textrm{im} \,
\gamma$ equals the dimension of the moduli space of $(1, \,
2)$-polarized abelian surfaces, which is precisely $3$. So
\eqref{eq.TA} implies
\begin{equation*}
h^1(S, \, T_S)=3 + h^0(R, \, \mathcal{N}_{\beta})
\end{equation*}
and by using Proposition \ref{prop.N-beta} we are done.
\end{proof}

By Proposition \ref{prop.degree.alb} we may consider the moduli
space $\mathcal{M}$ of minimal surfaces $S$ of general type with
$p_g=q=2$, $K_S^2=6$ and Albanese map of degree $2$. Let
$\mathcal{M}_{Ia}$, $\mathcal{M}_{Ib}$, $\mathcal{M}_{II}$ be the
subsets whose points parameterize isomorphism classes of surfaces of
type $Ia$, $Ib$, $II$, respectively. Therefore $\mathcal{M}$ can be
written as the disjoint union
\begin{equation*}
\mathcal{M}=\mathcal{M}_{Ia} \sqcup \mathcal{M}_{Ib} \sqcup
\mathcal{M}_{II}.
\end{equation*}
Set moreover $\mathcal{M}_I:=\mathcal{M}_{Ia} \sqcup
\mathcal{M}_{Ib}$.

\begin{proposition} \label{prop.moduli} The following holds:
\begin{itemize}
\item[$\boldsymbol{(i)}$] $\mathcal{M}_{Ia}$ and
$\mathcal{M}_{Ib}$ are irreducible, generically smooth of
dimension $4;$ \item[$\boldsymbol{(ii)}$] $\mathcal{M}_{II}$ is
irreducible, generically smooth of dimension $3$.
\end{itemize}
\end{proposition}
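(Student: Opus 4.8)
The plan is to produce, for each of the three types, an irreducible parameter space carrying the tautological family of surfaces, to check that this space dominates the corresponding stratum of $\mathcal{M}$ with finite fibres, and finally to match its dimension against the number $h^1(S,\,T_S)$ already computed in Proposition \ref{prop.h1-T}. The coincidence of the two counts will pin down the dimension and simultaneously force generic smoothness.

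First I would build the parameter spaces over the moduli space $\mathcal{A}_{\Delta}$ of $(1,\,2)$-polarized abelian surfaces, which is irreducible of dimension $3$. Discard the product polarizations and let $(A,\,\mL)$ vary. For type $I$, where $\mQ=\oo_A$, Proposition \ref{prop.2L-1} says that the branch curves $D_A\in|\mL^2\otimes\mathcal{I}_o^4|$ form a pencil, so they sweep out a $\mathbb{P}^1$-bundle over $\mathcal{A}_{\Delta}$ and contribute one extra parameter; for type $II$, where $\mQ\in\textrm{im}\,\phi_2^{\,\times}$, Proposition \ref{prop.2L-2} shows that the branch curve $N_j+N_k$ is rigid and contributes nothing. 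To these continuous data one adjoins the finite, \'etale datum of the square root $\mQ^{1/2}$ subject to the constraints of Definition \ref{def.type}. This yields parameter spaces $\mathcal{T}_{Ia}$ and $\mathcal{T}_{Ib}$ of dimension $3+1=4$ and $\mathcal{T}_{II}$ of dimension $3+0=3$. It is reassuring that this splitting $4=3+1$, $3=3+0$ matches exactly the decomposition $h^1(S,\,T_S)=3+h^0(R,\,\mathcal{N}_{\beta})$ obtained in the proof of Proposition \ref{prop.h1-T}, the summand $h^0(R,\,\mathcal{N}_{\beta})$ measuring precisely the freedom in moving the branch curve.

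Next I would establish irreducibility and the finiteness of the moduli map. Since $\mathcal{A}_{\Delta}$ and the $\mathbb{P}^1$-bundle above it are irreducible, irreducibility of each $\mathcal{T}_{\bullet}$ comes down to the transitivity of the monodromy of the paramodular group $G_{\Delta}$ on the orbit of two-torsion line bundles singled out by Definition \ref{def.type} (those outside $\textrm{im}\,\phi_2$ for $Ia$, those in $\textrm{im}\,\phi_2^{\,\times}$ for $Ib$, and the classes $\mQ\in\textrm{im}\,\phi_2^{\,\times}$ governing $II$); this transitivity is exactly what the Appendix computes. The natural map $\mathcal{T}_{\bullet}\to\mathcal{M}_{\bullet}$ is surjective by the converse half of Theorem \ref{th.A}. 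It has finite fibres because the construction is reconstructible from the intrinsic geometry of $S$: the Albanese map recovers $A=\textrm{Alb}(S)$ and the reduced branch curve $D_A$, hence the line bundle $\oo_A(D_A)=\mL^2\otimes\mQ$ and the line bundle $\mL\otimes\mQ^{1/2}$ defining the Stein factorization, and only finitely many admissible pairs $(\mL,\,\mQ^{1/2})$ and choices of origin are compatible with these. Therefore each $\mathcal{M}_{\bullet}$ is irreducible of dimension equal to $\dim\mathcal{T}_{\bullet}$, namely $4$, $4$ and $3$.

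Finally, generic smoothness follows by comparison with deformation theory. As $S$ is of general type its automorphism group is finite, so near $[S]$ the coarse moduli space has dimension at most $h^1(S,\,T_S)$, the Kuranishi germ being the zero-locus of an obstruction map into $H^2(S,\,T_S)$ inside a smooth germ of dimension $h^1(S,\,T_S)$. By Proposition \ref{prop.h1-T} this upper bound is $4$ for type $I$ and $3$ for type $II$, exactly the dimensions just obtained. The forced equality $\dim_{[S]}\mathcal{M}_{\bullet}=h^1(S,\,T_S)$ leaves no room for a nontrivial obstruction at a general point, so there the Kuranishi germ is smooth and each stratum is generically smooth. The main obstacle is the finiteness of the fibres of $\mathcal{T}_{\bullet}\to\mathcal{M}_{\bullet}$: it is this effectivity of all the constructed parameters that upgrades the inequality $\dim\mathcal{M}_{\bullet}\le h^1(S,\,T_S)$ to an equality and thereby yields the dimension and the generic smoothness in one stroke.
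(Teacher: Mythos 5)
Your proposal is correct and follows essentially the same route as the paper: the same parameter spaces (a $\mathbb{P}^1$-bundle of branch pencils over the two components of $\mathcal{A}_{\Delta}[2]$ for type $I$, and $\mathcal{A}_{\Delta}[2,\,4]$ for type $II$), irreducibility via the Appendix's monodromy computations, dominant generically finite maps onto the strata, and generic smoothness by matching the resulting dimensions against $h^1(S,\,T_S)$ from Proposition \ref{prop.h1-T}. The only slight imprecision is that for type $II$ the required transitivity is on the pairs $(\mathcal{Q},\,\mathcal{Q}^{1/2})$ rather than merely on $\textrm{im}\,\phi_2^{\times}$, which is exactly what Proposition \ref{prop.monodromy.2} provides.
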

\begin{proof}
$\boldsymbol{(i)}$ The construction of a surface of type $I$ depends
on the following data:
\begin{itemize}
\item the choice of a $(1,\,2)$-polarized abelian surface $(A, \,
\mathcal{L})$, not of product type ;
\item the choice of a general divisor $D_A$ in the
 pencil $|\mL^2 \otimes \mathcal{I}_o^4|$;
\item the choice of a non-trivial line bundle $\mathcal{Q}$ such
that $\mathcal{Q}^2=\oo_A$.
\end{itemize}
Let $\mathcal{A}_{\Delta}[2]$ be the space of pairs $(A, \,
\mathcal{Q})$, where $A$ is a $(1,\,2)$-polarized abelian surface
and $\mathcal{Q} \in \widehat{A}$ is the isomorphism class of a
non-trivial, $2$-torsion line bundle. In the Appendix (see
Proposition \ref{prop.components}) we show that
$\mathcal{A}_{\Delta}[2]$ is a quasi-projective variety, union of
two connected,
 irreducible components of dimension $3$
\begin{equation*}
\mathcal{A}_{\Delta}^{(a)}[2] \quad  \textrm{and} \quad
\mathcal{A}_{\Delta}^{(b)}[2],
\end{equation*}
which correspond to $\mathcal{Q} \notin \textrm{im}\,
\phi_2^{\times}$ and $\mathcal{Q} \in \textrm{im}\,
\phi_2^{\times}$, respectively. Therefore there are two
generically finite, dominant maps
\begin{equation*}
\mathcal{P}^{(a)} \longrightarrow \mathcal{M}_{Ia}, \quad
\mathcal{P}^{(b)} \longrightarrow \mathcal{M}_{Ib},
\end{equation*}
where $\mathcal{P}^{(a)}$ and $\mathcal{P}^{(b)}$ are suitable
projective bundles on $\mathcal{A}_{\Delta}^{(a)}[2]$ and
$\mathcal{A}_{\Delta}^{(b)}[2]$; it follows that $\mathcal{M}_{Ia}$
and $\mathcal{M}_{Ib}$ are irreducible of dimension $4$. On the
other hand, Proposition \ref{prop.h1-T} implies that for a general
$[S] \in \mathcal{M}_I$ we have
\begin{equation*}
\dim T_{[S]} \mathcal{M}_{I}=h^1(S, \, T_S)=4.
\end{equation*}
This shows that both $\mathcal{M}_{Ia}$ and $\mathcal{M}_{Ib}$ are
 generically smooth.

\medskip

$\boldsymbol{(ii)}$ The construction of a surface of type $II$
depends on the following data:
\begin{itemize}
\item the choice of a $(1,\,2)$-polarized abelian surface $(A, \,
\mathcal{L})$, not of product type ; \item the choice of
$\mathcal{Q} \in \textrm{im} \, \phi^{\times}_2$, which yields the
unique curve $D_A \in | \mL^2 \otimes \mathcal{Q}
\otimes\mathcal{I}_o^4|$; \item the choice of a square root
$\mathcal{Q}^{1/2}$ of $\mathcal{Q}$.
\end{itemize}
Let $\mathcal{A}_{\Delta}[2, \, 4]$ be space of triplets $(A, \,
\mQ, \, \mQ^{1/2})$, where $A$ is the isomorphism class of a $(1, \,
2)$-polarized abelian surface, $\mQ \in \textrm{im}\,
\phi_2^{\times}$ and $\mQ^{1/2}$ is a square root of $\mQ$. In the
Appendix (see Proposition \ref{prop.monodromy.2}) we show that
 $\mathcal{A}_{\Delta}[2, \, 4]$ is a $3$-dimensional, irreducible
 quasi-projective variety.
We have a generically finite, dominant map
\begin{equation*}
\mathcal{A}_{\Delta}[2, \, 4] \longrightarrow \mathcal{M}_{II},
\end{equation*}
so $\mathcal{M}_{II}$ is irreducible of dimension $3$. On the other
hand, Proposition \ref{prop.h1-T} implies that for a general $[S]
\in \mathcal{M}_{II}$ we have
\begin{equation*}
\dim T_{[S]} \mathcal{M}_{II}=h^1(S, \, T_S)=3,
\end{equation*}
hence $\mathcal{M}_{II}$ is generically smooth.
\end{proof}

\begin{proposition} \label{prop.disjoint}
$\mathcal{M}_{Ia}$, $\mathcal{M}_{Ib}$ and $\mathcal{M}_{II}$ are
connected components of $\mathcal{M}$.
\end{proposition}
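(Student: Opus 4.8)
The plan is to deduce the statement from the single fact that each of the three pieces is \emph{open} in $\mathcal{M}$. Indeed, $\mathcal{M}=\mathcal{M}_{Ia}\sqcup\mathcal{M}_{Ib}\sqcup\mathcal{M}_{II}$ is a partition, so once all three are open each is also closed, being the complement of the union of the other two; and since each is irreducible, hence connected, by Proposition \ref{prop.moduli}, each is then exactly one connected component of $\mathcal{M}$. So everything reduces to showing that the type of a surface is invariant under small deformation.

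First I would record that the type is read off from purely discrete data. By Theorem \ref{th.A} every $[S]\in\mathcal{M}$ comes from a quadruple $(A,\mathcal{L},\mathcal{Q},\mathcal{Q}^{1/2})$ with $A=\textrm{Alb}(S)$, and by Definition \ref{def.type} the type is governed by the two conditions ``$\mathcal{Q}=\oo_A$'' and ``$\mathcal{Q}^{1/2}\in\textrm{im}\,\phi_2^{\,\times}$''. Equivalently, type $I$ versus type $II$ is detected by the order of the torsion bundle $\mathcal{Q}^{1/2}\in\widehat{A}[4]$ (order dividing $2$, resp.\ equal to $4$), while inside type $I$ the distinction $Ia/Ib$ is whether the order-$2$ bundle $\mathcal{Q}^{1/2}$ lies in the subset $\textrm{im}\,\phi_2^{\,\times}\subset\widehat{A}[2]$.

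Next I would run a small deformation $\{S_t\}$ of $S=S_0$ inside $\mathcal{M}$. By Proposition \ref{prop.degree.alb} the degree of the Albanese map is a topological invariant, so each nearby $S_t$ still has Albanese map of degree $2$ and is, by Theorem \ref{th.A}, again of the constructed form, with Albanese surface $A_t$ deforming $A_0$ and carrying a $(1,2)$-polarization $\mathcal{L}_t$. Over the base of the deformation the relative torsion schemes $A_t[2]$, $\widehat{A}_t[4]$ are finite and \'etale, the distinguished set $\textrm{im}\,\phi_2$ and its subset $\textrm{im}\,\phi_2^{\,\times}$ form \'etale (multi)sections cut out by $\mathcal{L}_t$, and the branch data $\mathcal{Q}_t,\mathcal{Q}_t^{1/2}$ are torsion sections of $\widehat{A}_t[4]$. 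Torsion sections of an \'etale cover are locally constant, so the truth values of the two defining conditions cannot change for $t$ small; hence $S_t$ has the same type as $S_0$, which is the required openness. As a consistency check, at a general $[S_0]$ (where $K_{S_0}$ is ample) Proposition \ref{prop.h1-T} gives $\dim T_{[S_0]}\mathcal{M}=h^1(S_0,T_{S_0})$, which by Proposition \ref{prop.moduli} equals the dimension of the stratum $\mathcal{M}_X\ni[S_0]$; thus $\mathcal{M}$ is smooth of that dimension at $[S_0]$ and coincides with $\mathcal{M}_X$ near $[S_0]$.

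The hard part will be to make the family version of Theorem \ref{th.A} rigorous: one must check that as $t$ varies the branch divisor $D_{A_t}$, the square root $\mathcal{Q}_t^{1/2}$ and the associated double cover propagate in a flat algebraic family over the deformation base, so that the quadruple $(A_t,\mathcal{L}_t,\mathcal{Q}_t,\mathcal{Q}_t^{1/2})$ varies algebraically and its discrete invariants are genuinely locally constant. A secondary point is that $h^1(S,T_S)$ was computed under the assumption that $K_S$ is ample (Remark \ref{rem.K ample}); one should therefore either argue on the open dense ample locus and check that the complementary locus is of smaller dimension and links no two distinct strata, or observe that the torsion-data argument above applies verbatim everywhere on $\mathcal{M}$, since it never used ampleness.
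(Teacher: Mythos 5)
Your proposal is correct and its skeleton matches the paper's: reduce to showing that each of $\mathcal{M}_{Ia}$, $\mathcal{M}_{Ib}$, $\mathcal{M}_{II}$ is open in $\mathcal{M}$ (whence each is also closed, and a connected component by the irreducibility of Proposition \ref{prop.moduli}), pass to a family over a small disk, globalize the double-cover data of Theorem \ref{th.A} over that family, and conclude that discrete invariants cannot jump. The genuine difference is the mechanism used per case. The paper argues case by case: $\mathcal{M}_I$ is closed because type $II$ is characterized by disconnectedness of the branch curve $D_B$, and a limit of connected divisors is connected; $\mathcal{M}_{Ia}$ is open because it is cut out by connectedness of $D_{B}$ together with base-point freeness of $|K_{S}|$ (Proposition \ref{prop.type I}), both open conditions; only for $\mathcal{M}_{Ib}$ does the paper use the torsion-bundle picture, citing the Appendix result (Proposition \ref{prop.components}) that $\mathcal{A}_{\Delta}^{(b)}[2]$ is a connected component of $\mathcal{A}_{\Delta}[2]$ --- and it remarks that this also re-proves the $Ia$ case. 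You instead run the torsion argument uniformly for all three strata, encoding the type in the order of $\mathcal{Q}^{1/2}$ in $\widehat{A}[4]$ and in its position relative to $\mathrm{im}\,\phi_2^{\,\times}$; over a simply connected base, your ``sections of a finite \'etale group scheme cannot jump between sheets'' is exactly the no-local-monodromy statement underlying the paper's Appendix citation. Your route is more uniform and bypasses the geometry of $D_B$ and $|K_S|$ entirely; its cost is that it leans on the algebraic variation in $t$ of the square root $\mathcal{Q}^{1/2}_t$ (the line bundle actually defining the double cover), which is precisely the globalization you flag as ``the hard part'' --- note, however, that the paper asserts essentially the same globalization (``Globalizing the results of Section \ref{sec.d=2}\dots'') with comparable brevity, so this is not a defect peculiar to your argument. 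One caveat: your closing ``consistency check'' via Proposition \ref{prop.h1-T} is not a substitute for openness, since smoothness of $\mathcal{M}$ at a general $[S_0]\in\mathcal{M}_{Ia}$ with $4$-dimensional tangent space does not by itself exclude points of the $3$-dimensional stratum $\mathcal{M}_{II}$ from accumulating at $[S_0]$, and that computation requires $K_S$ ample; but as you present it only as a check, the main argument is unaffected.
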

\begin{proof}
We proved that $\mathcal{M}$ is the disjoint union of three
irreducible, constructible sets
\begin{equation*}
\mathcal{M}=\mathcal{M}_{Ia} \sqcup \mathcal{M}_{Ib} \sqcup
\mathcal{M}_{II},
\end{equation*}
so it is sufficient to show that $\mathcal{M}_{Ia}$,
 $\mathcal{M}_{Ib}$, $\mathcal{M}_{II}$ are all open in
 $\mathcal{M}$.
In other words, given a flat family $\mathscr{S} \to \mathscr{D}$
over a small disk $\mathscr{D}$, such that $S_0 \in
\mathcal{M}_{Ia}$ (resp. $S_0 \in$ $\mathcal{M}_{Ib}$,
$\mathcal{M}_{II}$), we must show that $S_t \in \mathcal{M}_{Ia}$
 (resp. $S_t \in$ $\mathcal{M}_{Ib}$, $\mathcal{M}_{II}$) for $t \neq 0$.
 We may associate to the
family $\mathscr{S} \to \mathscr{D}$ the family $\mathscr{X} \to
\mathscr{D}$, whose fibre over $t \in \mathscr{D}$ is the Stein
factorization $X_t$ of $S_t$, that is the contraction of the
elliptic curve $Z_t \subset S_t$. By the previous results it follows
that, up to a base change, the family $\mathscr{S} \to \mathscr{D}$
is the double cover of a family $\mathscr{B} \to \mathscr{D}$ of
blow-ups $B_t$ of $(1, \, 2)$-polarized abelian surfaces and the
family $\mathscr{X} \to \mathscr{D}$ is the double cover of the
family $\mathscr{A} \to \mathscr{D}$, where $A_t$ is the minimal
model of $B_t$. Globalizing the results of Section \ref{sec.d=2} we
see that the polarizations $\mathcal{L}_t$ on the abelian surfaces
$A_t$ glue together in order to give an ample line bundle
$\mathscr{L}$ on $\mathscr{A}$ and that there exists a divisor
$\mathscr{D}_{\mathscr{B}}$ on $\mathscr{B}$ whose restriction to
the fibre $B_t$ is the branch locus $D_{B_t}$ of $\beta_t \colon S_t
\to B_t$. Moreover, we find a commutative diagram
\begin{equation*} \label{dia.beta-glob}
\xymatrix{
\mathscr{S} \ar[r]^{\beta} \ar[dr]_{\alpha} & \mathscr{B}  \ar[d]^{\sigma} \\
 & \mathscr{A}}
\end{equation*}
and a line bundle $\mathscr{Q} \in \textrm{Pic}^0(\mathscr{A})$ of
order $2$ such that
\begin{equation*}
\mathscr{D}_{\mathscr{B}} \cong \sigma^*(2 \mathscr{L}+
\mathscr{Q})-4 \mathscr{E},
\end{equation*}
where $\sigma \colon \mathscr{B} \to \mathscr{A}$ is the relative
blow-down and $\mathscr{E}$ is the exceptional divisor of $\sigma$.
We denote by $\mathcal{Q}_t$ the restriction of $\mathscr{Q}$ to
$A_t$.

Now let us consider separately the three cases.
\begin{itemize}
\item $\mathcal{M}_{II}$ \emph{is open in} $\mathcal{M}$. \\
It is equivalent to prove that $\mathcal{M}_I$ is closed in
$\mathcal{M}$, namely that  $S_t \in \mathcal{M}_{I}$ for $t \neq 0$
implies $S_0 \in \mathcal{M}_{I}$.  The condition $S_t \in
\mathcal{M}_{I}$ for $t \neq 0$
 implies that $D_{B_t}$ is connected for any $t \neq 0$; it follows
 that $D_{B_0}$ is also connected, hence $S_0$ is again a surface
of type $I$.
\item $\mathcal{M}_{Ia}$ \emph{is open in} $\mathcal{M}$. \\
Assume that  $S_0 \in \mathcal{M}_{Ia}$. By Proposition
\ref{prop.type I}, this is equivalent to say that the branch locus
$D_{B_0}$ of $\beta_0 \colon S_0 \to B_0$
 is connected and that $|K_{S_0}|$ is base-point free. Clearly these are both
open conditions, so $\mathcal{M}_{Ia}$ is open in $\mathcal{M}$.
\item $\mathcal{M}_{Ib}$ \emph{is open in} $\mathcal{M}$. \\
Assume that  $S_0 \in \mathcal{M}_{Ib}$. Then we have $(A_0, \,
\mathcal{Q}_0) \in \mathcal{A}_{\Delta}^{(b)}[2]$. By Proposition
\ref{prop.components} in the Appendix it follows that
$\mathcal{A}_{\Delta}^{(b)}[2]$ is a connected component of
$\mathcal{A}_{\Delta}[2]$, in particular it is open therein. Hence
$(A_t, \, \mathcal{Q}_t) \in \mathcal{A}_{\Delta}^{(b)}[2]$ for $t
\neq 0$, proving that $\mathcal{M}_{Ib}$ is open in $\mathcal{M}$.
Notice that the same argument gives an alternative proof of the fact
that $\mathcal{M}_{Ia}$ is open in $\mathcal{M}$, since
$\mathcal{A}_{\Delta}^{(a)}[2]$ is the other connected component of
$\mathcal{A}_{\Delta}[2]$.
\end{itemize}
This completes the proof of Proposition \ref{prop.disjoint}.
\end{proof}

Summing up, Propositions \ref{prop.moduli}, Proposition
\ref{prop.disjoint} and Remark \ref{rem.K ample} imply the
following result.

\begin{theorem} \label{thm.B} Let $\mathcal{M}$ be the moduli space of
minimal surfaces $S$ of general type with $p_g=q=2$, $K_S^2=6$ and
Albanese map of degree $2$. Then the following holds:
\begin{itemize}
\item[$\boldsymbol{(i)}$] $\mathcal{M}$ is the disjoint union of
three connected components, namely
\begin{equation*}
\mathcal{M}=\mathcal{M}_{Ia} \sqcup \mathcal{M}_{Ib} \sqcup
\mathcal{M}_{II};
\end{equation*}
\item[$\boldsymbol{(ii)}$] these are also irreducible components
of the moduli space of minimal surfaces of general type$;$
\item[$\boldsymbol{(iii)}$] $\mathcal{M}_{Ia}, \,
\mathcal{M}_{Ib}, \, \mathcal{M}_{II}$ are generically smooth, of
dimension $4, \, 4, \, 3$, respectively$;$
\item[$\boldsymbol{(iv)}$] the general surface in
$\mathcal{M}_{Ia}$ and $\mathcal{M}_{Ib}$ has ample canonical
class$;$ all surfaces in $\mathcal{M}_{II}$ have ample canonical
class.
\end{itemize}
\end{theorem}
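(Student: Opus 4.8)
The plan is to assemble Theorem \ref{thm.B} from the three results already established, namely Proposition \ref{prop.moduli}, Proposition \ref{prop.disjoint} and Remark \ref{rem.K ample}, isolating item $(ii)$ as the only place where an argument beyond bookkeeping is needed. First I would dispose of $(i)$, $(iii)$ and $(iv)$. Proposition \ref{prop.disjoint} asserts that $\mathcal{M}_{Ia}$, $\mathcal{M}_{Ib}$ and $\mathcal{M}_{II}$ are connected components of $\mathcal{M}$; combined with the disjoint decomposition $\mathcal{M}=\mathcal{M}_{Ia} \sqcup \mathcal{M}_{Ib} \sqcup \mathcal{M}_{II}$ this gives $(i)$ immediately. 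Proposition \ref{prop.moduli} records that each piece is irreducible and generically smooth, of dimension $4$, $4$, $3$ respectively, which is exactly $(iii)$. Finally $(iv)$ is the content of Remark \ref{rem.K ample}.

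The substantive step is $(ii)$: upgrading ``connected component of $\mathcal{M}$'' to ``irreducible component of the full moduli space $\mathcal{M}_{\mathrm{gt}}$ of minimal surfaces of general type.'' The hard part here is to show that $\mathcal{M}$ sits inside $\mathcal{M}_{\mathrm{gt}}$ as an open and closed subset. I would argue that membership in $\mathcal{M}$ is cut out by deformation-invariant data: the invariants $p_g=q=2$ and $K_S^2=6$ are constant in families of surfaces of general type, while both the condition that the Albanese image be a surface and the value $2$ of the degree of the Albanese map are topological invariants by Proposition \ref{prop.degree.alb}, hence constant along any connected family. It follows that $\mathcal{M}$ is a union of connected components of $\mathcal{M}_{\mathrm{gt}}$.

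Granting that, the conclusion is formal. A connected component of an open and closed subset of $\mathcal{M}_{\mathrm{gt}}$ is a connected component of $\mathcal{M}_{\mathrm{gt}}$ itself; since by Proposition \ref{prop.moduli} each of $\mathcal{M}_{Ia}$, $\mathcal{M}_{Ib}$, $\mathcal{M}_{II}$ is irreducible, each is in particular an irreducible component of $\mathcal{M}_{\mathrm{gt}}$, which is $(ii)$.

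The main obstacle I anticipate is making the closedness of $\mathcal{M}$ in $\mathcal{M}_{\mathrm{gt}}$ fully rigorous: one must ensure that the topological invariance furnished by Proposition \ref{prop.degree.alb}, together with the invariance of $p_g$, $q$ and $K^2$, genuinely propagates across an entire connected family (via constancy of the homeomorphism type of the fibres) rather than holding only fibrewise. Once $\mathcal{M}$ is seen to be open and closed, the passage from connected components to irreducible components, and hence the whole theorem, follows by combining Propositions \ref{prop.moduli} and \ref{prop.disjoint} with Remark \ref{rem.K ample}.
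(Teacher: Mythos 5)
Your proposal is correct and follows essentially the paper's own route: the paper proves Theorem \ref{thm.B} by simply combining Proposition \ref{prop.moduli}, Proposition \ref{prop.disjoint} and Remark \ref{rem.K ample}, exactly as you do for $(i)$, $(iii)$ and $(iv)$. Your explicit justification of $(ii)$ — that $\mathcal{M}$ is open and closed in the moduli space of minimal surfaces of general type because $p_g$, $q$, $K^2$ and, by Proposition \ref{prop.degree.alb}, the degree of the Albanese map are topological invariants, hence constant on connected components, so that an irreducible connected component of $\mathcal{M}$ is automatically an irreducible component of the big moduli space — is precisely the argument the paper leaves implicit when it invokes Proposition \ref{prop.degree.alb} to justify considering $\mathcal{M}$ in the first place.
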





%
%
\section*{Appendix. The spaces $\mathcal{A}_{\Delta}[2]$ and $\mathcal{A}_{\Delta}[2, \, 4]$ and their connected components}

First let us recall some well-known facts about the moduli space of
polarized abelian surfaces, that can be found for instance in
\cite[Chapter 8]{BL04}.

Let us denote by $\Delta$ the matrix $\left(
                                                                  \begin{array}{cc}
                                                                    1 & 0 \\
                                                                    0 & 2 \\
                                                                  \end{array}
                                                                \right)$
and let
\begin{equation*}
\mathfrak{H}_2:=\{Z \in M_2(\mathbb{C}) \, | \, {}^tZ=Z, \,
\textrm{Im}\,Z >0 \}
\end{equation*}
be the Siegel upper half-space. We define a polarized abelian
surface of type $\Delta$ with symplectic basis to be a triplet
\begin{equation*}
(A, \, H, \, \{\lambda_1, \lambda_2, \mu_1, \mu_2 \})
\end{equation*}
with $A=\mathbb{C}^2/\Lambda$ an abelian surface, $H$ a polarization
of type $\Delta$ on $A$ and $\{\lambda_1, \lambda_2, \mu_1, \mu_2
\}$ a basis of the lattice $\Lambda$, symplectic with respect to
$H$. Then any $Z \in \mathfrak{H}_2$ determines a polarized abelian
surface of type $\Delta$ with symplectic basis $(A_Z, \, H_Z, \,
\{\lambda_1, \lambda_2, \mu_1, \mu_2 \})$ as follows: just set
\begin{equation*}
\lambda_Z:=(Z, \, D) \mathbb{Z}^{2g}, \quad H_Z=(\textrm{Im}\,
Z)^{-1}
\end{equation*}
and let $\{\lambda_1, \lambda_2, \mu_1, \mu_2 \}$ be the columns of
the matrix $(Z, \,D)$. Moreover, there exists a universal family,
that is a holomorphic family $\mathcal{X}_{\Delta} \to
\mathfrak{H}_2$ parameterizing these objects, see \cite[Section
8.7]{BL04}.

If $Z, \, Z' \in \mathfrak{H}_2$, the polarized abelian surfaces
$(A_Z, \, H_Z)$ and $(A_{Z'}, \, H_{Z'})$ are isomorphic if and only
if $Z'=M \cdot Z$, where
\begin{equation*} G_{\Delta}:=\left(
      \begin{array}{cccc}
        \mathbb{Z} & \mathbb{Z} & \mathbb{Z} & 2 \mathbb{Z} \\
        2 \mathbb{Z} & \mathbb{Z} & 2 \mathbb{Z} & 2 \mathbb{Z} \\
        \mathbb{Z} & \mathbb{Z} & \mathbb{Z} & 2 \mathbb{Z} \\
        \mathbb{Z} & \frac{1}{2} \mathbb{Z} & \mathbb{Z} & \mathbb{Z} \\
      \end{array}
    \right) \cap \textrm{Sp}_4(\mathbb{Q})
\end{equation*}
is the full paramodular group (see \cite[Chapter 8]{BL04},
\cite{Mu99}) and the action is defined as follows: for any $M=\left(
         \begin{array}{cc}
           \alpha & \beta \\
           \gamma & \delta \\
         \end{array}
       \right) \in G_{\Delta}$
and $Z \in \mathfrak{H}_2$, we set
\begin{equation} \label{eq.action on H2}
M \cdot Z :=(\alpha Z+\beta)(\gamma Z + \delta)^{-1}.
\end{equation}
Notice that the following special matrices lie in $G_{\Delta}$:
\begin{equation*}
M_{\boldsymbol{b}}:=\left(
\begin{array}{cccc}
1 & 0 & b_{11} & 2b_{12}\\
0 & 1 & 2b_{12} & 2b_{22} \\
0 & 0 & 1 & 0 \\
0 & 0 & 0 & 1 \\
\end{array}
\right),
\end{equation*}
\begin{equation*}
M_{\boldsymbol{d}}:=\left(
\begin{array}{cccc}
d_{22} & -d_{21} & 0 & 0\\
-2d_{12} & d_{11} & 0 & 0 \\
0 & 0 & d_{11} & 2d_{12} \\
0 & 0 & d_{21} & d_{22} \\
\end{array}
\right) \; \textrm{with} \; \left(
\begin{array}{cc}
d_{11} & 2d_{12} \\
d_{21} & d_{22} \\
\end{array}
\right) \in \textrm{SL}_2(\mathbb{Z}),
\end{equation*}
\begin{equation*}
M_{\boldsymbol{1,\,2}}:=\left(
\begin{array}{cccc}
0 & 0 & 1 & 0\\
0 & 0 & 0 & 2 \\
-1 & 0 & 0 & 0\\
0 & -\frac{1}{2} & 0 & 0 \\
\end{array}
\right).
\end{equation*}

The action \eqref{eq.action on H2} is properly discontinuos, so the
moduli space $\mathcal{A}_{\Delta}$ of $(1,\,2)$-polarized abelian
surfaces is a quasi-projective variety of dimension $3$, obtained as
the quotient $\mathfrak{H}_g/G_{\Delta}$. Then $G_{\Delta}$ is the
orbifold fundamental group of $\mathcal{A}_{\Delta}$ and there is an
induced monodromy action of $G_{\Delta}$ on both $A[2]$ and
$\widehat{A}[2]$, see \cite{Har79}.

\begin{proposition} \label{prop.monodromy}
The monodromy action of $G_{\Delta}$ on $\widehat{A}[2]$ has
precisely three orbits, namely
\begin{equation*}
\{\mathcal{O}_A\}, \quad \emph{im}\,\phi_2^{\times}, \quad
\widehat{A}[2] \setminus \emph{im} \, \phi_2.
\end{equation*}
\end{proposition}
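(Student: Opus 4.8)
The plan is to split the argument into two independent parts: first exhibiting three monodromy-invariant subsets that partition $\widehat{A}[2]$, and then checking that the monodromy acts transitively on each of them. The first part is essentially formal. Since $\mathcal{O}_A$ is the identity element of the group $\widehat{A}[2]$ and the monodromy acts through group automorphisms, the singleton $\{\mathcal{O}_A\}$ is an orbit. Next, the homomorphism $\phi_2 \colon A[2]\to\widehat{A}[2]$ is induced by $\phi_{\mathcal{L}}$, which is canonically attached to the polarization and hence commutes with the monodromy representations on $A[2]$ and on $\widehat{A}[2]$; concretely, if $g\in G_{\Delta}$ then $g\cdot\phi_2(x)=\phi_2(g\cdot x)$, so $g(\textrm{im}\,\phi_2)=\textrm{im}\,\phi_2$. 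Thus $\textrm{im}\,\phi_2$ is a monodromy-stable subgroup, and removing the fixed point $\mathcal{O}_A$ shows that both $\textrm{im}\,\phi_2^{\times}$ (three elements) and its complement $\widehat{A}[2]\setminus\textrm{im}\,\phi_2$ (twelve elements) are invariant. This already gives at least three orbits, and it remains to prove that each of these two sets is a single orbit.

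For the transitivity I would pass to explicit coordinates. The symplectic basis $\{\lambda_1,\lambda_2,\mu_1,\mu_2\}$ attached to the period matrix $(Z,\Delta)$ identifies $A[2]$ with $\tfrac{1}{2}\Lambda/\Lambda$ and $\widehat{A}[2]$ with the dual $(\mathbb{Z}/2\mathbb{Z})^4$, and the monodromy action of an element of $G_{\Delta}$ becomes the reduction modulo $2$ of its integrally normalized action on the dual lattice (a suitable normalization being needed to absorb the $2\mathbb{Z}$- and $\tfrac{1}{2}\mathbb{Z}$-entries that occur in the paramodular group). I would then compute the $\mathbb{F}_2$-linear maps attached to the distinguished generators $M_{\boldsymbol{b}}$, $M_{\boldsymbol{d}}$ and $M_{\boldsymbol{1,\,2}}$, and write down the two-dimensional $\mathbb{F}_2$-subspace corresponding to $\textrm{im}\,\phi_2$ in this basis. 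With these matrices in hand, transitivity on the three-element set $\textrm{im}\,\phi_2^{\times}$ is immediate: the matrices $M_{\boldsymbol{d}}$ coming from $\textrm{SL}_2(\mathbb{Z})$ act on $\textrm{im}\,\phi_2\cong(\mathbb{Z}/2\mathbb{Z})^2$ through the reduction $\textrm{SL}_2(\mathbb{Z})\to\textrm{SL}_2(\mathbb{F}_2)\cong S_3$, which already permutes $\mathcal{Q}_1,\mathcal{Q}_2,\mathcal{Q}_3$ transitively.

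The substantive point is transitivity on the twelve remaining elements. Here I would use the unipotent generators $M_{\boldsymbol{b}}$, which act as transvections moving vectors between the different cosets of $\textrm{im}\,\phi_2$, together with the exchange involution $M_{\boldsymbol{1,\,2}}$ and the matrices $M_{\boldsymbol{d}}$, verifying by direct composition that the orbit of a single chosen vector outside $\textrm{im}\,\phi_2$ meets each of the three nonzero cosets of $\widehat{A}[2]/\textrm{im}\,\phi_2$ and contains all four vectors in each of them. A convenient bookkeeping device is to track separately the induced action on the quotient $\widehat{A}[2]/\textrm{im}\,\phi_2\cong(\mathbb{Z}/2\mathbb{Z})^2$, showing that the three nonzero cosets are permuted transitively, and then, after fixing a coset, to exhibit generators in its stabilizer that permute its four elements transitively.

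I expect the main obstacle to be organizational rather than conceptual: correctly normalizing the paramodular generators so that their reduction modulo $2$ is unambiguous despite the half-integral and even entries, pinning down the coordinate description of $\textrm{im}\,\phi_2$ so that the three invariant sets are precisely those in the statement, and confirming that $M_{\boldsymbol{b}}$, $M_{\boldsymbol{d}}$, $M_{\boldsymbol{1,\,2}}$ generate enough of $G_{\Delta}$ for the orbit computation. Once the explicit $\mathbb{F}_2$-action of these generators and the subspace $\textrm{im}\,\phi_2$ are written out, both transitivity claims reduce to a finite, mechanical check, and combined with the invariance established first they yield exactly the three orbits asserted.
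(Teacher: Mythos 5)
Your overall architecture is the same as the paper's: the paper also first shows that $\{\mathcal{O}_A\}$, $\textrm{im}\,\phi_2^{\times}$ and the complement are invariant (observing that monodromy sends $\mathcal{L}$ to $\mathcal{L}\otimes\mathcal{Q}$ and that $K(\mathcal{L})=K(\mathcal{L}\otimes\mathcal{Q})$, which is your equivariance statement), then identifies $\widehat{A}[2]$ with the characters $\Lambda\to\{\pm 1\}$ via the symplectic basis, writes down the action of a general $M\in G_{\Delta}$ after the normalization by $\left(\begin{smallmatrix}\mathbb{I}_2 & 0\\ 0 & \Delta\end{smallmatrix}\right)$, lists the permutations induced by $M_{\boldsymbol{b}}$, $M_{\boldsymbol{d}}$, $M_{\boldsymbol{1,2}}$ on the twelve characters outside $\textrm{im}\,\phi_2$, and checks transitivity of the resulting subgroup of $\mathcal{S}_{12}$ (with GAP); your coset-by-coset bookkeeping is just a different organization of that same finite verification. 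Also, your worry about whether these matrices generate enough of $G_{\Delta}$ is unnecessary: invariance is proved for the whole group, so transitivity of any subgroup on each invariant set suffices.

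There is, however, one step that fails as stated: transitivity on $\textrm{im}\,\phi_2^{\times}$ is \emph{not} immediate from the matrices $M_{\boldsymbol{d}}$, because these do not act through all of $\textrm{SL}_2(\mathbb{F}_2)$. The $\textrm{SL}_2(\mathbb{Z})$-blocks allowed in $G_{\Delta}$ have the form $\left(\begin{smallmatrix} d_{11} & 2d_{12}\\ d_{21} & d_{22}\end{smallmatrix}\right)$, and the determinant condition $d_{11}d_{22}-2d_{12}d_{21}=1$ forces $d_{11}$ and $d_{22}$ to be odd; so their reductions mod $2$ lie in a proper (unipotent lower-triangular) subgroup of $\textrm{SL}_2(\mathbb{F}_2)$, not in the full group $\cong S_3$. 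In fact the situation is worse: a character $\chi\in\textrm{im}\,\phi_2$ is trivial on $\lambda_2,\mu_2$ and hence determined by the pair $(\chi(\lambda_1),\chi(\mu_1))$, on which a general $M$ acts through $\left(\begin{smallmatrix} a_{11} & b_{11}\\ c_{11} & d_{11}\end{smallmatrix}\right)\bmod 2$; for $M_{\boldsymbol{d}}$ this matrix is $\left(\begin{smallmatrix} d_{22} & 0\\ 0 & d_{11}\end{smallmatrix}\right)\equiv \mathbb{I}_2$, so every $M_{\boldsymbol{d}}$ acts \emph{trivially} on $\textrm{im}\,\phi_2$ --- consistently, the paper's explicit $M_{\boldsymbol{d}}$-permutations move only the $\psi_i$ and no $\chi_i$. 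The repair stays entirely within your framework: $M_{\boldsymbol{b}}$ with $b_{11}$ odd induces the transposition $(\chi_1\,\chi_3)$, and $M_{\boldsymbol{1,2}}$ (whose relevant reduction is $\left(\begin{smallmatrix}0&1\\1&0\end{smallmatrix}\right)$) induces $(\chi_1\,\chi_2)$; these two transpositions generate $S_3$ on $\textrm{im}\,\phi_2^{\times}$. With this substitution, and with the finite composition check for the twelve-element set carried out as you describe, your proof goes through and coincides in substance with the paper's.
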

\begin{proof}
Let us start by making a couple of observations. First, the trivial
line bundle $\mathcal{O}_A$ is obviously invariant for the monodromy
action. Second, for the computation of the monodromy we may assume
that $\textrm{NS}(A)$ is $1$-dimensional, generated by the numerical
class of $\mL$. Then for any $M \in G_{\Delta}$ the monodromy
transformation associated with $M$ sends $\mathcal{L}$ to
$\mathcal{L} \otimes \mathcal{Q}$, with $\mathcal{Q} \in
\widehat{A}$. Since $K(\mL)=K(\mL \otimes \mathcal{Q})$, it follows
that $\ker \phi_2$ is invariant under the monodromy action; hence
$\textrm{im}\, \phi_2^{\times}$ is invariant too. It remains to show
that $\widehat{A}[2] \setminus \textrm{im} \, \phi_2$ forms a single
orbit.

Set $A=\mathbb{C}^2/ \Lambda$ and write the period matrix for $A$ as
\begin{equation*}
\left(
  \begin{array}{cccc}
    z_{11} & z_{12} & 1 & 0 \\
    z_{21} & z_{22} & 0 & 2 \\
  \end{array}
\right),
\end{equation*}
with $Z:=\left(
\begin{array}{cc}
z_{11} & z_{12} \\
z_{21} & z_{22} \\
\end{array}
\right) \in \mathfrak{H}_2$. Then the lattice $\Lambda$ is spanned
by the four column vectors
\begin{equation*}
\lambda_1:=\left(
            \begin{array}{c}
              z_{11} \\
              z_{21} \\
            \end{array}
          \right), \quad
\lambda_2:=\left(
            \begin{array}{c}
              z_{12} \\
              z_{22} \\
            \end{array}
          \right), \quad
\mu_1:=\left(
            \begin{array}{c}
              1 \\
              0 \\
            \end{array}
          \right), \quad
\mu_2:=\left(
            \begin{array}{c}
              0 \\
              2 \\
            \end{array}
          \right)
\end{equation*}
and the matrix of the alternating form $E \colon \Lambda \times
\Lambda \to \mathbb{Z}$ with respect to this basis is $\left(
\begin{array}{cc}
0 & \Delta \\
-\Delta & 0 \\
\end{array}
\right).$ Therefore
\begin{equation} \label{eq.lm}
E(\lambda_1, \, \mu_1)=1, \quad E(\mu_1, \, \lambda_1)=-1, \quad
E(\lambda_2, \, \mu_2)=2, \quad E(\mu_2, \, \lambda_2)=-2
\end{equation}
and all the other values are $0$.

The finite subgroup $\widehat{A}[2]$ of $\widehat{A}$ is isomorphic
to $(\mathbb{Z}/2 \mathbb{Z})^4$ and, by the Appell-Humbert theorem,
its elements can be canonically identified with the $16$ characters
$\Lambda \to \mathbb{C}^*$ with values in $\{\pm 1\}$, see
\cite[Chapter 2]{BL04}. Since
\begin{equation*}
K(L)=\{x \in A \; | \, E(x, \, \Lambda) \subseteq \mathbb{Z} \},
\end{equation*}
it follows $K(L)= \langle \frac{\lambda_2}{2}, \, \frac{\mu_2}{2}
\rangle$ and $\textrm{im} \, \phi_2 = \langle
\phi_2(\frac{\lambda_1}{2}), \, \phi_2(\frac{\mu_1}{2}) \rangle$. In
other words, $\textrm{im}\, \phi_2$ corresponds to the four
characters
\begin{equation*}
e^{2 \pi i (\cdot, \, x)} \colon \Lambda \longrightarrow \{\pm 1\}
\end{equation*}
with $x=0, \, \frac{\lambda_1}{2}, \, \frac{\mu_1}{2}, \,
\frac{\lambda_1+\mu_1}{2}$. We will denote a character $\chi \colon
\Lambda \to \{\pm 1\}$ by the vector \\$(\chi(\lambda_1), \,
\chi(\lambda_2), \, \chi(\mu_1), \, \chi(\mu_2))$. Therefore
$\textrm{im} \, \phi_2$ consists of
\begin{equation*}
\chi_0:=(1,\,1,\,1,\,1), \quad \chi_1:=(1, \, 1, \, -1, \, 1), \quad
\chi_2:=(-1, \, 1, \, 1, \, 1), \quad \chi_3:=(-1, \, 1, \, -1, \,
1),
\end{equation*}
whereas the $12$ elements of $\widehat{A}[2] \setminus \textrm{im}
\, \phi_2$ correspond to
\begin{equation*}
\begin{array}{ccc}
 \psi_1 :=(1, \, 1, \, 1, \, -1),  &  \psi_2:=(1, \, 1, \, -1,
\,-1), & \psi_3:=(1, \, -1,\, 1, \, 1), \\
\psi_4 :=(1, \, -1, \, 1, \, -1),   &  \psi_5 :=(1, \, -1, \, -1, \, 1), & \psi_6:=(1, \, -1, \, -1, \,-1), \\
  \psi_7 :=(-1, \, 1,\, 1, \, -1), & \psi_8:=(-1, \, 1, \, -1, \, -1), &  \psi_9 :=(-1, \, -1, \, 1, \, 1),\\
  \psi_{10}:=(-1, \, -1, \, 1, \,-1), & \psi_{11}:=(-1, \, -1,\, -1, \, 1),
& \psi_{12} :=(-1, \, -1, \, -1, \, -1).
\end{array}
\end{equation*}
Now take $M=\left(
              \begin{array}{cc}
                \alpha & \beta\\
                \gamma & \delta \\
              \end{array}
            \right) \in G_{\Delta}$, where
\begin{equation*}
\alpha=\left(
  \begin{array}{cc}
    a_{11} & a_{12} \\
    2a_{21} & a_{22} \\
  \end{array}
\right), \quad \beta=\left(
  \begin{array}{cc}
    b_{11} & 2b_{12} \\
    2b_{21} & 2b_{22} \\
  \end{array}
\right), \quad \gamma=\left(
  \begin{array}{cc}
    c_{11} & c_{12} \\
    c_{21} & \frac{c_{22}}{2} \\
  \end{array}
\right), \quad \delta=\left(
  \begin{array}{cc}
    d_{11} & 2d_{12} \\
    d_{21} & d_{22} \\
  \end{array}
\right)
\end{equation*}
and $a_{ij}, \, b_{ij}, \, c_{ij}, d_{ij} \in \mathbb{Z}$. By
\cite[proof of Proposition 8.1.3]{BL04}, the monodromy action of $M$
on $\Lambda$ is given by the matrix $\left(
  \begin{array}{cc}
    \mathbb{I}_2 & 0 \\
    0 & \Delta \\
  \end{array}
\right)^{-1} \, {}^tM \,
 \left( \begin{array}{cc}
    \mathbb{I}_2 & 0 \\
    0 & \Delta \\
  \end{array}
\right)$, so the induced action over a character $\chi$ is as
follows:
\begin{equation} \label{eq.action.lattice}
\begin{split}
(M \cdot \chi)(\lambda_1) & = \chi(\lambda_1)^{a_{11}}
\chi(\lambda_2)^{a_{12}} \chi(\mu_1)^{b_{11}} \chi(\mu_2)^{b_{12}}, \\
(M \cdot \chi)(\lambda_2) & =\chi(\lambda_1)^{2a_{21}}
\chi(\lambda_2)^{a_{22}}
\chi(\mu_1)^{2b_{21}}\chi(\mu_2)^{b_{22}}, \\
(M \cdot \chi)(\mu_1) & = \chi(\lambda_1)^{c_{11}}
\chi(\lambda_2)^{c_{12}} \chi(\mu_1)^{d_{11}} \chi(\mu_2)^{d_{12}}, \\
(M \cdot \chi)(\mu_2) & =\chi(\lambda_1)^{2c_{21}}
\chi(\lambda_2)^{c_{22}} \chi(\mu_1)^{2d_{21}} \chi(\mu_2)^{d_{22}}.
\end{split}
\end{equation}
For instance we have
\begin{equation*}
M \cdot \chi_1=((-1)^{b_{11}}, \, 1, \, (-1)^{d_{11}}, \, 1), \quad
M \cdot \chi_2=((-1)^{a_{11}}, \, 1, \, (-1)^{c_{11}}, \, 1),
\end{equation*}
hence the set $\textrm{im} \, \phi_2^{\times}$ is
$G_{\Delta}$-invariant (and by using the matrices of type
$M_{\boldsymbol{b}}$ one checks that it is a single
$G_{\Delta}$-orbit, as expected).

Now we are ready to compute the monodromy action of $G_{\Delta}$ on
$\widehat{A}[2]\setminus \textrm{im}\, \phi_2$ or, equivalently, on
the set $\{\psi_1, \ldots, \psi_{12} \}$. By using
\eqref{eq.action.lattice}, one shows that
\begin{itemize}
\item the monodromy permutation associated with a matrix of type
$M_{\boldsymbol{b}}$ is
\begin{itemize}
\item[-] $(\psi_2 \, \psi_8)(\psi_5 \, \psi_{11})(\psi_6 \,
\psi_{12})$ if $b_{11}$ is odd and $b_{12}$, $b_{22}$ are even;
\item[-] $(\psi_1 \, \psi_7)(\psi_2 \, \psi_8)(\psi_4 \,
\psi_{10}) (\psi_{6} \, \psi_{12})$  if $b_{12}$ is odd and
$b_{11}$, $b_{22}$ are even; \item[-] $(\psi_1 \, \psi_4)(\psi_2 \,
\psi_6)(\psi_7 \, \psi_{10}) (\psi_{8} \, \psi_{12})$ if $b_{22}$ is
odd and $b_{11}$, $b_{12}$ are even;
\end{itemize}
\item the monodromy permutation associated with a matrix of type
$M_{\boldsymbol{d}}$ is
\begin{itemize}
\item[-] $(\psi_3 \, \psi_9)(\psi_4 \, \psi_{10})(\psi_5 \,
\psi_{11}) (\psi_6 \, \psi_{12})$ if $d_{21}$ is odd and $d_{12}$ is
even; \item[-] $(\psi_1 \, \psi_2)(\psi_4 \, \psi_6)(\psi_7 \,
\psi_8) (\psi_{10} \, \psi_{12})$ if $d_{12}$ is odd and $d_{21}$ is
even;
\end{itemize}
\item the monodromy permutation associated with the matrix
$M_{\boldsymbol{1, \, 2}}$ is

$(\psi_1 \, \psi_3)(\psi_2 \, \psi_9)(\psi_5 \, \psi_7) (\psi_6 \,
\psi_{10})(\psi_8 \, \psi_{11})$.
\end{itemize}

Therefore, the subgroup of the symmetric group $\mathcal{S}_{12}$
corresponding to the monodromy action of $G_{\Delta}$ on $\{\psi_1,
\ldots, \psi_{12} \}$ contains
\begin{equation*}
\begin{split}
T:=\langle & (2 \;8)(5 \; 11)(6 \; 12), \; (1 \;7)(2 \;8)(4 \;
10)(6 \; 12), \; (1 \;4)(2 \;6)(7 \; 10)(8 \; 12), \\
&(3 \;9)(4 \;10)(5 \; 11)(6 \; 12), \; (1 \; 2)(4 \;6)(7 \; 8)(10 \;
12), \; (1\;3)(2\;9)(5 \; 7)(6 \; 10)(8 \; 11) \rangle.
\end{split}
\end{equation*}
A straightforward computation, for instance by using the Computer
Algebra System \verb|GAP4| (see \cite{GAP4}), shows that $T$ is a
\emph{transitive} subgroup of $\mathcal{S}_{12}$; therefore
$\{\psi_1, \ldots, \psi_{12} \}$ form a single orbit for the
$G_{\Delta}$-action. This completes the proof.
\end{proof}

Now let $(A=\mathbb{C}^2/\Lambda, \, H)$ be a polarized abelian
surface of type $\Delta$. A symplectic basis  $\{\lambda_1,
\lambda_2, \mu_1, \mu_2 \}$ of $\Lambda$ for $H$ determines the $15$
non-trivial characters $\chi_1, \ldots, \chi_3$, $\psi_1, \ldots,
\psi_{12}$. Therefore we can consider the set of pairs
\begin{equation*}
(Z, \, \rho ), \quad Z \in \mathfrak{H}_2, \, \rho \in \{\chi_1,
\ldots, \chi_3, \, \psi_1, \ldots, \psi_{12} \} \subset
\widehat{A_Z}[2],
\end{equation*}
which can be seen as a subscheme of the relative Picard scheme
$\textrm{Pic}^0(\mathcal{X}_{\Delta}/ \mathfrak{H}_2)$.

The group $G_{\Delta}$ acts on this set of pairs, the action being
defined by \eqref{eq.action on H2} on the first component and by the
monodromy
 on the second one. The corresponding quotient
$\mathcal{A}_{\Delta}[2]$  is a quasi-projective variety and by
construction we have a degree $15$ cover $\pi \colon
\mathcal{A}_{\Delta}[2] \to \mathcal{A}_{\Delta}$. We can identify
$\mathcal{A}_{\Delta}[2]$ with the set of pairs $(A, \,
\mathcal{Q})$, where $A$ is the isomorphism class of a
$(1,\,2)$-polarized abelian variety and $\mathcal{Q}$ is a
non-trivial, $2$-torsion line bundle on $A$; then the map $\pi$ is
just the forgetful map $(A, \, \mathcal{Q}) \to A$.

\begin{proposition} \label{prop.components}
$\mathcal{A}_{\Delta}[2]$ is the disjoint union of two connected
components:
\begin{equation*}
\mathcal{A}_{\Delta}^{(a)}[2] \quad  \textrm{and} \quad
\mathcal{A}_{\Delta}^{(b)}[2],
\end{equation*}
corresponding to $\mathcal{Q} \notin \emph{im}\, \phi_2^{\times}$
and $\mathcal{Q} \in \emph{im}\, \phi_2^{\times}$, respectively.
The forgetful maps
\begin{equation*}
\pi_1 \colon \mathcal{A}_{\Delta}^{(a)}[2] \longrightarrow
\mathcal{A}_{\Delta}, \quad \pi_2 \colon
\mathcal{A}_{\Delta}^{(b)}[2]\longrightarrow \mathcal{A}_{\Delta}
\end{equation*}
are finite covers of degree $12$ and $3$. Finally, both
$\mathcal{A}_{\Delta}^{(a)}[2]$ and
$\mathcal{A}_{\Delta}^{(b)}[2]$ are irreducible and generically
smooth.
\end{proposition}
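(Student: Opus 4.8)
The plan is to deduce the statement directly from the orbit computation of Proposition \ref{prop.monodromy}. By its very construction, $\mathcal{A}_{\Delta}[2]$ is the quotient of $\mathfrak{H}_2 \times N$ by the diagonal action of $G_{\Delta}$, where $N := \{\chi_1, \chi_2, \chi_3, \psi_1, \ldots, \psi_{12}\}$ is the set of the fifteen non-trivial $2$-torsion characters, with $G_{\Delta}$ acting on the first factor by \eqref{eq.action on H2} and on the second by monodromy. First I would observe that, because $\mathfrak{H}_2$ is connected and the action is properly discontinuous, the connected components of the quotient are in natural bijection with the $G_{\Delta}$-orbits on $N$. Indeed, for each orbit $O \subseteq N$ the invariant locus $\mathfrak{H}_2 \times O$ descends to $\mathfrak{H}_2 / G_{\Delta, \rho}$, where $\rho \in O$ and $G_{\Delta, \rho}$ is its stabilizer; this piece is connected since $\mathfrak{H}_2$ is, and distinct orbits plainly yield disjoint pieces.

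Next I would invoke Proposition \ref{prop.monodromy}, according to which the monodromy action on $\widehat{A}[2]$ has the three orbits $\{\mathcal{O}_A\}$, $\textrm{im}\,\phi_2^{\times}$, and $\widehat{A}[2] \setminus \textrm{im}\,\phi_2$. Discarding the trivial character, $N$ splits into exactly two orbits, namely $\textrm{im}\,\phi_2^{\times}$, of cardinality $3$, and $\widehat{A}[2] \setminus \textrm{im}\,\phi_2$, of cardinality $12$. By the previous paragraph this produces precisely two connected components: I set $\mathcal{A}_{\Delta}^{(b)}[2]$ to be the one attached to $\textrm{im}\,\phi_2^{\times}$ and $\mathcal{A}_{\Delta}^{(a)}[2]$ the one attached to $\widehat{A}[2] \setminus \textrm{im}\,\phi_2$, which is exactly the claimed dichotomy according to whether $\mathcal{Q}\in \textrm{im}\,\phi_2^{\times}$ or not.

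For the degrees, I would invoke orbit-stabilizer: the forgetful map restricted to the component attached to an orbit $O$ is the covering $\mathfrak{H}_2/G_{\Delta,\rho} \to \mathfrak{H}_2/G_{\Delta}$ of degree $[G_{\Delta} : G_{\Delta,\rho}] = |O|$. Equivalently, over a general $[Z] \in \mathcal{A}_{\Delta}$ the fibre of $\pi$ consists of the fifteen characters, each a distinct point, distributed among the components by orbit. Thus $\pi_1$ has degree $12$ and $\pi_2$ has degree $3$, as asserted.

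Finally, for irreducibility and generic smoothness I would exploit the presentation of each component as a quotient $\mathfrak{H}_2/G_{\Delta,\rho}$ by a finite-index subgroup of $G_{\Delta}$ acting properly discontinuously. Since $\mathfrak{H}_2$ is an irreducible complex manifold (a bounded domain in $\mathbb{C}^3$), the quotient is irreducible, being the image of $\mathfrak{H}_2$ under the quotient map; and it is smooth away from the image of the fixed-point locus of $G_{\Delta,\rho}$, which is a proper analytic subset, so each component is generically smooth. The one genuinely non-formal ingredient is Proposition \ref{prop.monodromy}; granted its orbit count, every assertion here follows structurally from the quotient construction, and I foresee no serious obstacle beyond careful bookkeeping of the stabilizer subgroups.
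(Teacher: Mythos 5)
Your proposal is correct and takes essentially the same route as the paper: both deduce the two components and the degrees $12$ and $3$ directly from the orbit count of Proposition \ref{prop.monodromy} applied to the quotient construction of $\mathcal{A}_{\Delta}[2]$. The only cosmetic difference lies in the final bookkeeping: the paper obtains irreducibility from normality (quotient singularities) plus connectedness, and generic smoothness from the fact that $\pi_1$, $\pi_2$ are \'etale over a smooth Zariski open subset of $\mathcal{A}_{\Delta}$, whereas you argue via the surjection from the irreducible manifold $\mathfrak{H}_2$ and the properness of the fixed-point locus of the stabilizer subgroups, which amounts to the same standard facts about properly discontinuous quotients.
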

\begin{proof}
The first part of the statement follows immediately since the action
of $G_{\Delta}$ on the set of non-trivial characters $\Lambda \to
\{\pm 1\}$ has precisely two orbits, namely $\{\chi_1, \ldots \chi_3
\}$ and $\{\psi_1, \ldots, \psi_{12} \}$  (Proposition
\ref{prop.monodromy}). Moreover $\pi_1$ and $\pi_2$ are
$\acute{\textrm{e}}$tale covers on a smooth Zariski open set
$\mathcal{A}_{\Delta}^0 \subset \mathcal{A}_{\Delta}$; then they are
generically smooth. Finally, by construction
$\mathcal{A}_{\Delta}^{(a)}[2]$ and $\mathcal{A}_{\Delta}^{(b)}[2]$
are normal varieties, because they only have quotient singularities.
Then, since they are connected, they must be also irreducible.
\end{proof}

Similarly, there is an action of $G_{ \Delta}$ on the set of
triplets
\begin{equation*}
(Z, \, \chi, \, \chi^{1/2}),
\end{equation*}
where $Z \in \mathfrak{H}_2$, $\chi \in \{\chi_1, \, \chi_2, \chi_3
\} \subset \widehat{A_Z}[2]$ and $\chi^{1/2} \colon \Lambda_Z \to
\mathbb{C}^*$ is a character whose square is $\chi$. The
corresponding quotient is a quasi-projective variety that can be
identified with the space $\mathcal{A}_{\Delta}[2, \, 4]$ of triples
$(A, \, \mQ, \, \mQ^{1/2})$, where $A$ is the isomorphism class of a
$(1, \, 2)$-polarized abelian surface, $\mQ \in \textrm{im}\,
\phi_2$ and $\mQ^{1/2}$ is a square root of $\mQ$. There is
forgetful map $\pi \colon \mathcal{A}_{\Delta}[2, \,4] \to
\mathcal{A}_{\Delta}$, sending $(A, \, \mQ, \, \mQ^{1/2})$ to $A$;
it is a finite cover of degree $48$.

\begin{proposition} \label{prop.monodromy.2}
$\mathcal{A}_{\Delta}[2, \,4]$ is irreducible and generically
smooth.
\end{proposition}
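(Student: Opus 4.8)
The plan is to mimic the proof of Proposition \ref{prop.components}. By construction $\mathcal{A}_{\Delta}[2,\,4]$ is the quotient by $G_{\Delta}$ of the set of triples $(Z,\,\chi,\,\chi^{1/2})$, which fibers over the connected (indeed contractible) Siegel space $\mathfrak{H}_2$; the fiber over a fixed $Z$ is the finite set $\mathcal{T}$ of the $48$ pairs $(\chi,\,\chi^{1/2})$ with $\chi \in \{\chi_1,\,\chi_2,\,\chi_3\}$ and $\chi^{1/2}$ a square root of $\chi$ (there are $3\cdot 16=48$ of them, since the square roots of a fixed $2$-torsion character form a torsor under $\widehat{A}[2]$). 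Because $\mathfrak{H}_2$ is connected, the connected components of the quotient are in bijection with the orbits of the monodromy action of $G_{\Delta}$ on $\mathcal{T}$. Thus irreducibility will follow once I show that this action is \emph{transitive} and that $\mathcal{A}_{\Delta}[2,\,4]$ is \emph{normal}; the latter holds because, being a quotient of a smooth variety by a properly discontinuous group action, it has at worst quotient singularities, so connectedness forces irreducibility exactly as in Proposition \ref{prop.components}.

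To prove transitivity I would argue in two steps. First, the forgetful map $(\chi,\,\chi^{1/2}) \mapsto \chi$ is $G_{\Delta}$-equivariant onto $\textrm{im}\,\phi_2^{\times}=\{\chi_1,\,\chi_2,\,\chi_3\}$, on which $G_{\Delta}$ already acts transitively by Proposition \ref{prop.monodromy}. Hence $\mathcal{T}$ is a single orbit if and only if the stabilizer of $\chi_1$ acts transitively on the $16$ square roots of $\chi_1$. A square root of $\chi_1=(1,\,1,\,-1,\,1)$ is a character $\chi^{1/2}\colon \Lambda \to \mu_4$ with $\chi^{1/2}(\mu_1) \in \{\pm i\}$ and $\chi^{1/2}(\lambda_1),\,\chi^{1/2}(\lambda_2),\,\chi^{1/2}(\mu_2) \in \{\pm 1\}$, so these $16$ characters are parameterized by $\{\pm 1\}^3 \times \{\pm i\}$.

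The key computation is then to extend formula \eqref{eq.action.lattice} from $2$-torsion to $4$-torsion characters. Since the monodromy action on $\Lambda$ is given by an \emph{integral} matrix (the conjugate of ${}^tM$ by $\textrm{diag}(\mathbb{I}_2,\,\Delta)$ lies in $\textrm{GL}_4(\mathbb{Z})$), the half-integer entries of the paramodular matrices cancel and \eqref{eq.action.lattice} holds verbatim with the same integer exponents, now evaluated in $\mu_4$ rather than $\{\pm 1\}$. Plugging in the generators $M_{\boldsymbol{b}}$, $M_{\boldsymbol{d}}$ and $M_{\boldsymbol{1,\,2}}$ (either restricting to those stabilizing $\chi_1$, or equivalently keeping all of $G_{\Delta}$ and recording the permutation it induces on the full set $\mathcal{T}$ of $48$ elements) yields an explicit list of permutations. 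The main obstacle is purely bookkeeping: one must now track values modulo $4$ and handle a permutation representation on $48$ points, which is no longer feasible by inspection. As in the proof of Proposition \ref{prop.monodromy}, I would feed the resulting generators into \verb|GAP4| (see \cite{GAP4}) and verify that the subgroup of the symmetric group they generate is transitive.

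Finally, generic smoothness is immediate: as in the proof of Proposition \ref{prop.components}, the forgetful map $\pi \colon \mathcal{A}_{\Delta}[2,\,4] \to \mathcal{A}_{\Delta}$ is an \'etale cover over a smooth Zariski-open subset $\mathcal{A}_{\Delta}^0 \subset \mathcal{A}_{\Delta}$, so $\mathcal{A}_{\Delta}[2,\,4]$ is smooth there. Combined with the transitivity computation this shows that $\mathcal{A}_{\Delta}[2,\,4]$ is connected and normal, hence irreducible, and generically smooth, completing the proof.
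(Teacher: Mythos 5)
Your proposal is correct and follows essentially the same route as the paper: the paper's proof likewise reduces the statement to transitivity of the $G_{\Delta}$-monodromy on the $48$ pairs $(\mathcal{Q}, \, \mathcal{Q}^{1/2})$ and then appeals to the same explicit-matrix computation (as in Proposition \ref{prop.monodromy}, via \verb|GAP4|), leaving the details to the reader. In fact you supply more detail than the paper does --- the reduction to the stabilizer of $\chi_1$ acting on the $16$ square roots, the integrality argument showing that \eqref{eq.action.lattice} extends verbatim to $\mu_4$-valued characters, and the connectedness-plus-normality and \'etale-cover arguments for irreducibility and generic smoothness --- and all of these steps are sound.
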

\begin{proof}
It is sufficient to check that the monodromy action of
$G_{\Delta}$ is transitive on the set
\begin{equation*}
\{(\mQ, \, \mQ^{1/2}) \, | \, \mQ \in \textrm{im}\,
\phi_2^{\times}, \, (\mQ^{1/2})^2 = \mQ \}.
\end{equation*}
This is a straightforward computation which can be carried out as
the one in the proof of Proposition \ref{prop.monodromy}, so it is
left to the reader.
\end{proof}


\bigskip
\bigskip

Matteo Penegini, Lehrstuhl Mathematik VIII, Universit\"at
Bayreuth, NWII, D-95440 Bayreuth, \\ Germany \\ \emph{E-mail
address:}
 \verb|matteo.penegini@uni-bayreuth.de| \\ \\

Francesco Polizzi, Dipartimento di Matematica, Universit\`{a}
della
Calabria, Cubo 30B, 87036 \\ Arcavacata di Rende (Cosenza), Italy\\
\emph{E-mail address:} \verb|polizzi@mat.unical.it|


\begin{thebibliography} {9}
%
%
%
\bibitem[Ba87]{Ba87}
W. Barth, \textit{Abelian surfaces with {$(1,2)$}-polarization}.
Adv. Stud. Pure Math. VOL. \textbf{10} (1987), 41--84.
%
%
\bibitem[BL04]{BL04}
C. Birkenhake, H. Lange, \textit{Complex abelian varieties}.
Grundlehren der Mathematischen Wissenschaften, Vol \textbf{302},
Second edition, Springer-Verlag, Berlin, 2004.
%
\bibitem[BHPV03]{BHPV03}
W. Barth, K. Hulek, C.A.M. Peters, A. Van de Ven, \textit{Compact
Complex Surfaces}. Grundlehren der Mathematischen Wissenschaften,
Vol \textbf{4}, Second enlarged edition, Springer-Verlag, Berlin,
2003.
%
\bibitem[BPS09]{BPS09}
F. Bastianelli, G. P. Pirola, L. Stoppino, \textit{Galois closure
and Lagrangian varieties}. Adv. Math., \textbf{225}, (2010),
3463--3501.
%

%

%
%
\bibitem[Ca90]{Ca90}
F. Catanese, \textit{Footnotes to a theorem of I. Reider}.
Algebraic geometry (L'Aquila, 1988), Lecture Notes in Math., Vol
\textbf{1417}. Berlin, 1990, 67--74.
%

\bibitem[Ca91]{Ca91}
F. Catanese, \textit{Moduli and classification of irregular
Kaehler manifolds (and algebraic varieties) with Albanese general
type fibrations}. Invent. Math. \textbf{104} (1991), 263-289.

\bibitem[Ca11]{Ca11}
F. Catanese, \textit{A superficial working guide to deformations
and moduli}, e-print arXiv:1106.1368, to appear in in the Handbook
of Moduli, a volume in honour of David Mumford, to be published by
International Press.

%

%

%

%
%

%

\bibitem[F98]{F98}
R. Friedman, \textit{Algebraic surfaces and holomorphic vector
bundles}. Universitext, Springer-Verlag, New York, (1998).

\bibitem[GAP4]{GAP4}
The GAP Group, GAP - Groups, Algorhithms and Programming, Version
4.4, http:// www.gap-system.org
%
%
%

\bibitem[Har79]{Har79}
J. Harris, \textit{Galois groups of enumerative problems}. Duke
Math. J. \textbf{46} (1979), 685--724.

\bibitem[HP02]{HP02}
C. Hacon, R. Pardini, \textit{Surfaces with $p_g=q=3$}. Trans.
Amer. Math. Soc. \textbf{354} (2002), 2631--2638.
%
%

%


\bibitem[HvM89]{HvM89}
E. Horozov,  P. van Moerbeke,  \textit{The full geometry of
Kowalewski's top and $(1,2)$-abelian surfaces}. Comm. Pure Appl.
Math. \textbf{42} (1989), no. 4, 357--407.

%
\bibitem[Man08]{Man08}
M. Manetti, \textit{Smoothing of singularities and deformation
types of surfaces}, Symplectic $4$-manifolds and algebraic
surfaces, Lecture Notes in Math. 1938, Springer 2008, 169--230.
%
%
%
\bibitem[Mu81]{Mu81}
S. Mukai, \textit{Duality between $D(X)$ and $D(\hat X)$ with its
application to Picard sheaves}. Nagoya Math. J. \textbf{81} (1981),
153--175.
%
\bibitem[Mu99]{Mu99}
S.Mukai, \textit{Moduli of abelian surfaces and regular polyhedral
groups}, in ``Moduli of algebraic varieties and the Monster",
Proceedings, Sapporo, January 1999. Ed. I. Nakamura, Hokkaido Univ.
(1999), 68-74.
%

%
%
\bibitem[Pe09]{Pe09}
M. Penegini, \textit{The classification of isotrivial fibred
surfaces with $p_g=q=2$}, with an appendix by S. Roellenske. Collect. Math. \textbf{62}, No. 3, (2011), 239--274.
%
\bibitem[PP10]{PP10}
M. Penegini, F. Polizzi, \textit{On surfaces with $p_g=q=2$,
$K^2=5$ and Albanese map of degree $3$}, e-print arXiv:1011.4388
(2010).
%
\bibitem[Pi02]{Pi02}
G.P. Pirola, \textit{Surfaces with $p_g=q=3$}. Manuscripta Math.
\textbf{108} no. 2 (2002), 163--170.
%
%
%
\bibitem[Rol10]{Rol10}
S. Rollenske, \textit{Compact moduli for certain Kodaira
fibrations}, Annali della Scuola Normale Superiore di Pisa, Vol. IX,
issue 4 (2010).
%
\bibitem[Se06]{Se06}
E. Sernesi, \textit{Deformations of Algebraic Schemes}.
Grundlehren der Mathematischen Wissenschaften, Vol \textbf{334},
Springer-Verlag, Berlin, 2006.
%
%
%

%
%

%

%
\bibitem[Z03]{Z03}
F. Zucconi, \textit{Surfaces with $p_g=q=2$ and an irrational
pencil}. Canad. J. Math. \textbf{55} (2003), no. 3, 649--672.

\end{thebibliography}
\end{document}